\documentclass[10pt,reqno]{amsart}

\usepackage{graphicx}

\usepackage{amssymb}
\usepackage{amsthm}
\theoremstyle{plain}
\usepackage{comment}
\usepackage{enumitem} 
\usepackage{amsmath,amssymb,amsthm,mathtools}
\usepackage{dsfont} \usepackage{mathabx}
\usepackage{bm}
\usepackage[hidelinks]{hyperref}
\usepackage[backend=biber,sorting=ydnt]{biblatex}
\newtheorem*{theorem*}{Theorem}

\newtheorem{definition}{Definition}
\newtheorem{theorem}{Theorem}
\newtheorem{proposition}{Proposition}
\newtheorem{corollary}{Corollary}
\newtheorem{conjecture}{Conjecture}
\newtheorem{lemma}{Lemma}

\theoremstyle{remark}
\newtheorem{remark}{Remark}
\usepackage{lineno}

\title{Refinements on the Restriction and Kakeya problems range in $\mathbb{R}^4$}

\author[T. Moreira]{Tiago Moreira}
\address{Scuola Internazionale Superiore di Studi Avanzati (SISSA), Via Bonomea 265, 34136 Trieste, Italy}
\email{tdiasmor@sissa.it}
\subjclass[2020]{Primary 42B25; Secondary 42B10, 42B20.}
\keywords{Fourier extension, Restriction problem, Kakeya sets, Bochner--Riesz}

\begin{document}

\begin{abstract}
We obtain improved Fourier restriction and Kakeya maximal estimates in $\mathbb R^4$. More precisely, by proving the two-ends incidence estimates $\mathbf{TE}\left(\frac{269}{89},
\frac{179}{89},
\frac{493}{534}\right)$ and $
\mathbf{TE}(3.06\ldots,3.06\ldots,0.94\ldots)$, we extend the restriction range to $p>2+ \frac{356}{447}$ and the Kakeya maximal estimate up to dimension $3.06056$. The implications to the known lower-bounds for the Hausdorff dimension of Kakeya sets in $\mathbb{R}^4$ and the Bochner--Riesz conjecture are also studied.
\end{abstract}

\maketitle
\section{Introduction}
A recurring principle in Fourier analysis is that analytic estimates for oscillatory
functions are governed by geometric information about where their oscillations concentrate. The Fourier restriction problem provides one of the most prominent
examples of this phenomenon. After localization in frequency, an oscillatory integral associated with a curved hypersurface can be decomposed into wave-packets, each of which is essentially supported on a long thin tube in physical space. Consequently, estimates for the original operator are closely tied to the possible intersection patterns of large collections of such tubes. This viewpoint places
the restriction problem in direct contact with Kakeya-type incidence geometry.

To make this precise, let
    $S=\{(\xi,h(\xi)):\xi\in B^{n-1}(0,1)\}\subset\mathbb{R}^n$
be a hypersurface with positive definite second fundamental form. The Fourier
extension operator associated with $S$ is defined as 
\begin{equation}\label{eq:extension}
    E_Sf(x,t)
    =
    \int_{B^{n-1}(0,1)}
    e^{2\pi i(x\cdot\xi+th(\xi))}
    f(\xi)\,d\xi .
\end{equation}
One of the central questions is to determine the range of exponents for which
$E_Sf$ satisfies uniform $L^p$-estimates. The expected answer is given by Stein's restriction conjecture first proposed in \cite{Ste79}.

\begin{conjecture}[Restriction conjecture]\label{conj:restriction}
Let $S\subset\mathbb{R}^n$ be a compact $C^2$ hypersurface with positive
definite second fundamental form.  If
    $p>\frac{2n}{n-1}$,
then
\begin{equation}\label{eq:restriction}
    \|E_Sf\|_{L^p(\mathbb{R}^n)}
    \lesssim_p
    \|f\|_{L^p(B^{n-1}(0,1))} .
\end{equation}
\end{conjecture}

The conjecture is known in dimension two, following the work of Fefferman \cite{Fef70}, but remains open in dimensions $n\geq3$. Much of the recent
progress has come from methods that exploit the geometry of wave-packets rather than treating the extension operator solely through oscillatory-integral estimates. Guth's introduction of polynomial partitioning into the restriction
problem in \cite{Gut16} was particularly influential. In dimension three, it led to the range $p>3+\frac14$. 
\newpage
Wang later introduced the broom method and improved this to  $p>3+\frac{3}{13}$ in \cite{Wan22}. More recently, Wang and Wu developed a different mechanism that combines refined decoupling with incidence estimates for families of wave-packet tubes in \cite{WW24}.  Their approach yields the restriction estimate in $\mathbb{R}^3$ for $p>\frac{22}{7}$.

The purpose of the present work is to further investigate this interaction between decoupling and incidence geometry in four dimensions. The restriction problem in $\mathbb{R}^4$ occupies an interesting intermediate regime: polynomial partitioning gives strong structural information, but the geometry of tube intersections becomes considerably richer than in three dimensions. Previous
results illustrate how difficult it is to obtain even small improvements in the exponent. Guth proved the restriction estimate for $p>2+\frac45$ in \cite{Gut18}. This was subsequently improved by Demeter to $p>2+\frac{66642}{83303}$ in \cite{Dem19}, and by Hickman and Rogers to $p>2+\frac{1407}{1759}$ in \cite{HR19}. For higher-dimensional related developments, we guide the reader to \cite{HZ24,GWZ24}.

Our approach relies similarly on the work done by Borges et al. in \cite{BCCLXZ}, which combined two ideas that were originally developed in somewhat
different contexts.  The first is the planebrush argument of Katz and Zahl \cite{KZ}, which exploits the additional structure present when many tubes cluster near two-dimensional planes. The second is the decoupling-incidence
framework of Wang and Wu \cite{WW24}, in which geometric information about tube overlaps is converted into better analytic estimates through refined decoupling. Bringing these mechanisms together, Borges et al. obtained an improved range $p>2+\frac{200}{251}$. We note that besides the work being rather striking, it does not use the latent parameter $M$ present in the main construction (originally developed by Katz--Zahl). We recover this parameter and, by carefully developing a new estimate using it, an interpolation between the three (the before mentioned, planebrush and trilinear) estimates gives a strictly greater range.
\begin{theorem}\label{thm:main-restriction}
For every $p>2+\frac{356}{447}$,
one has
\begin{equation}
\label{restriction}
    \|E_Sf\|_{L^p(\mathbb{R}^4)}
    \lesssim_p
    \|f\|_{L^p(B^3(0,1))}.
\end{equation}
\end{theorem}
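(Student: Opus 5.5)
The plan is to follow the decoupling--incidence framework of Wang--Wu \cite{WW24} as adapted to $\mathbb{R}^4$ by Borges et al.\ \cite{BCCLXZ}. The analytic input is a reduction of Theorem~\ref{thm:main-restriction} to a two-ends incidence estimate of the form $\mathbf{TE}(\alpha,\beta,\gamma)$ for the wave-packet tubes. Concretely, after the standard $\varepsilon$-removal and localization to a ball $B_R$, we decompose $E_Sf$ into wave packets $T$. These are $R^{1/2}\times\cdots\times R^{1/2}\times R$ tubes. We then apply the broad--narrow dichotomy. The narrow part is handled by decoupling and induction on scales. The broad part is handled by refined decoupling: the $L^p$ norm on a shading $Y(T)\subset T$ is controlled by $\|f\|_2$ times a quantity counting how many tubes pass through a typical point. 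That count is exactly what a two-ends Kakeya-type incidence bound $\mathbf{TE}(\alpha,\beta,\gamma)$ controls. The first step is therefore to record the (presumably stated later) implication $\mathbf{TE}(\alpha,\beta,\gamma)\Rightarrow$ restriction for $p>p(\alpha,\beta,\gamma)$. I would check that the explicit exponent $\mathbf{TE}\bigl(\tfrac{269}{89},\tfrac{179}{89},\tfrac{493}{534}\bigr)$ announced in the abstract yields exactly $p>2+\tfrac{356}{447}$.

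The core of the proof is establishing that incidence estimate. Following Katz--Zahl \cite{KZ}, I would run three competing estimates on a two-ends, uniformly shaded family of $\delta$-tubes in $\mathbb{R}^4$.
\begin{enumerate}
\item The trilinear (Bennett--Carbery--Tao type) estimate, which wins when tubes through a point are broadly spread.
\item The hairbrush/planebrush estimate. When many tubes through a stem tube cluster near $2$-planes, one builds the planebrush and counts tubes in the union of the resulting $3$-dimensional slabs.
\item A new estimate exploiting the latent parameter $M$ of Katz--Zahl. Here $M$ is the number of essentially distinct planes (or the multiplicity of plane-clustering) in the planebrush.
\end{enumerate}
For (3), I would split according to the size of $M$. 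If $M$ is small, tubes through a point concentrate on few planes, and a two-dimensional (Córdoba/Wolff-type) $L^2$ Kakeya bound inside each plane gives strong control. If $M$ is large, the planebrush has more volume than the estimate of \cite{BCCLXZ} credits, and one gains by redoing the planebrush volume count keeping $M$ explicit. This produces a bound of the schematic form $|\bigcup Y(T)|\gtrsim \delta^{\varepsilon}\min\{F_1,\,F_2(M),\,F_3(M)\}$.

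The final step is an interpolation/optimization. We take the worst case over the free parameters: the multiplicity $\mu$, the plane-clustering $M$, and the density of the shading. We then take a geometric mean of the three estimates with weights chosen so that $M$ and the other auxiliary parameters cancel, leaving a pure incidence bound. Solving the resulting linear program in the exponents should produce $(\alpha,\beta,\gamma)=\bigl(\tfrac{269}{89},\tfrac{179}{89},\tfrac{493}{534}\bigr)$. Feeding this into the reduction of the first step gives Theorem~\ref{thm:main-restriction}.

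I expect the main obstacle to be the new $M$-dependent estimate in the large-$M$ regime. There one must show that distinct planes in the planebrush genuinely contribute disjoint volume, which requires controlling how tubes in different planes through the stem can overlap. My first approach would be a second application of the two-ends condition within each plane, combined with a Wolff-type counting of pairs of planes sharing many tubes. The bookkeeping of the exponents, so that the interpolation closes at exactly $356/447$, is routine but delicate.
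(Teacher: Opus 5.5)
Your outer framework matches the paper. Theorem~\ref{thm:main-restriction} comes from feeding an estimate of the form $\mathbf{TE}(d,\frac{d+1}{2},b)$ into the decoupling--incidence argument of \cite{BCCLXZ}, and $d=\frac{269}{89}$ does give $\frac{2(d+4)}{d+2}=\frac{1250}{447}$. One technical point you skip: the Wang--Wu two-ends condition only yields the Katz--Zahl condition with $\varepsilon_1=\eta^4$ and $C_1\lesssim\delta^{-\eta^6}$, so the incidence estimate must be proved with an explicit $C_1^{-C/\varepsilon_1}$ loss.

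The genuine gap is the core step, and it starts with a misidentified parameter. In the paper $M$ is not a count of planes; that bookkeeping is done by $D,D_1,D_2$ in Lemma~\ref{KZconstruction}. Instead, $M$ is the number of fine incidences $\sum_{T\in\mathbb T_3(T_\rho)}\#\{Q\subset Y_3(T)\cap Q_\rho\}$ that the fine descendants of one coarse tube $T_\rho$ produce inside one active $\rho$-cube $Q_\rho$. It arises at an intermediate stage of the Katz--Zahl fine/coarse refinement and is tied to the densities by $\lambda_3\sim\frac{\delta}{\rho}M\frac{\#\mathbb T_\rho}{\#\mathbb T_3}\lambda_\rho$. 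Your small/large-$M$ dichotomy, with planar C\'ordoba bounds and a recounted planebrush volume, is never turned into an inequality. So your claim that the resulting linear program ``should produce'' $\bigl(\frac{269}{89},\frac{179}{89},\frac{493}{534}\bigr)$ has nothing behind it.

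The missing idea is the two-scale structure together with a localized trilinear bound. The paper's new estimate $R$ works as follows.
\begin{enumerate}
\item Pick a $\rho$-cube $Q_\rho$ carrying $\gtrapprox\mu_\rho M$ fine incidences, so that $|E_{Q_\rho}|\gtrapprox\delta^4\mu_\rho M/\mu_1$.
\item Apply the Guth--Zahl trilinear estimate (Theorem~\ref{trilinearKakeya}) only to the $\lessapprox\mu_\rho n$ fine tubes meeting $Q_\rho$. This estimate goes beyond the Bennett--Carbery--Tao range and is what fixes the exponents in $G$ and $R$. The step uses the $\rho$-quantitative transversality $|V\wedge V\wedge V|\gtrsim\rho$ supplied by the decomposition.
\item Eliminate $M$ through $M\gtrapprox\frac{\rho}{\delta}nq^{-1}$, where $q=\lambda_\rho/\lambda_3$.
\item Bound $\mu_\rho$ by the hairbrush estimate for the $A$-parallel coarse family, and use $X_1/X_\rho\lesssim A^{-1}$.
\end{enumerate}
This gives $R=q^{27/16}\lambda_1^{-9/16}\delta^{-3/4}\rho^{-7/4}X_1^{1/4}$.

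The other two estimates are the global trilinear bound $G=\lambda_1^{-9/4}\rho^{-1}\delta^{-3/4}X_1^{3/4}$ and the planebrush/rescaled-hairbrush bound $P=q^{-7/24}\lambda_1^{-25/24}\rho^{1/6}\delta^{-1}X_1^{1/18}$. All three bound the fine multiplicity $\mu_1$ with the coarse scale $\rho$ left free. The weights $\frac{2}{267},\frac{81}{89},\frac{22}{267}$ are chosen to cancel $\rho$ (not $M$, which is already gone) while saturating the constraint $a=\frac{d+1}{2}$. The leftover factor $q^{-45/356}$ is discarded because $q\gtrapprox1$. Finally, the preliminary robust-transversality assumption is removed by Proposition~\ref{RTtoTE}, which applies since $\frac{493}{534}\ge\frac{d-1}{3}$.
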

Numerically,
    $2+\frac{200}{251}\approx 2.79681$, whereas $2+\frac{356}{447}\approx 2.79642$
so Theorem~\ref{thm:main-restriction} improves the previously known
four-dimensional range. 

An advantage of the decoupling-incidence viewpoint is that it is not restricted to the translation-invariant extension operator. The same general framework can be applied to H\"ormander-type oscillatory integral operators by
\cite{GWX25}. In particular, the incidence improvement obtained here has consequences for the Bochner--Riesz problem.  Recall that the Bochner--Riesz operator and its multiplier are given by
\[
    B_{\delta}f(x)
    =
    \int_{\mathbb{R}^n}
    e^{ix\cdot\xi}
    (1-|\xi|^2)_+^\delta
    \widehat f(\xi)\,d\xi, \qquad m_\delta(\xi)=(1-|\xi|^2)_+^\delta.
\]
The conjecture can be stated as follows:
\begin{conjecture}[Bochner--Riesz conjecture] 
Let $p'$ denote the Hölder conjugate of $p$. Then, for every $1<p<\infty$, $B_\delta$ is bounded on $L^p(\mathbb R^4)$ whenever the parameter $\delta$ satisfies
$\delta>
\max\left\{4\left|\frac12-\frac1p\right|-\frac12,0\right\}$.
\end{conjecture}
In dimension four, Guo et al. in \cite{GOW25} proved the conjectured Bochner--Riesz range under the condition $\max\{p,p'\}>\frac{14}{5}$ by revisiting older approaches to tackle this problem. As a consequence of the restriction argument in \cite{BCCLXZ}, this range was then enlarged to $\max\{p,p'\}\geq 2+\frac{200}{251}$. By similar procedures, our result also translates into a new improvement on this problem, reaching the range $\max\{p,p'\}\geq 2+\frac{356}{447}$.

Due to the fact that our interpolation method is able to produce incidence estimates $\mathbf{TE}\left(d,a,b\right)$ with a better factor of $b$ satisfying $b\leq\frac{d-1}{3}$, in contrast to \cite{BCCLXZ}, we are also able to study the $L^q\to L^p$ estimates away from the diagonal, as done in \cite{GWX25}. We reproduce some of their ideas in a $b$-dependent way and upgrade the best known bound for $n=4$ until now $2.8$ presented in Corollary~$1.2$ of \cite{GWX25}.
\begin{corollary}
\label{offdiagresult}
Let $p\ge \frac{786}{281}$, and let $q$ satisfy $q'\le \frac35p$.
Then, for every $\varepsilon>0$ and every $R\ge1$,
\begin{equation}
\|E_S f\|_{L^p(B(0,R))}
\lesssim_\varepsilon
R^\varepsilon
\|f\|_{L^q(B^3(0,1))}.
\end{equation}
\end{corollary}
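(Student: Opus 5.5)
The plan is to follow the off-diagonal scheme of \cite{GWX25}, Corollary 1.2, with every step kept explicit in the incidence parameters $(d,a,b)$. We then feed in the new two-ends estimates $\mathbf{TE}\left(\frac{269}{89},\frac{179}{89},\frac{493}{534}\right)$ and $\mathbf{TE}(3.06\ldots,3.06\ldots,0.94\ldots)$, in place of the estimate used there.

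The first step is to restate the $L^q\to L^p$ machinery of \cite{GWX25} in a $b$-dependent form. We decompose $E_Sf$ on $B(0,R)$ into wave packets at scale $R$ and pigeonhole dyadically in $\|f_T\|$ and in the two-ends multiplicity of the tubes. Refined decoupling then bounds $\|E_Sf\|_{L^p(B_R)}$ by a product of the $\ell^2$ mass of the packets and the incidence count of the tube family. The hypothesis $\mathbf{TE}(d,a,b)$ is used to control the number of $r$-balls meeting many tubes, where $b$ measures the gain for tubes concentrating in lower-dimensional sets. The outcome of this step should be an estimate $\|E_Sf\|_{L^p(B_R)}\lesssim R^\varepsilon\|f\|_{L^q}$, valid whenever $p\ge p_0(d,a,b)$ and $q'\le \theta(d,a,b)\,p$, with explicit rational functions $p_0$ and $\theta$.

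The second step handles the exponent $q$. Passing from $L^2$-type packet control to $L^q$ uses Hölder on the frequency side over caps together with the usual dyadic pigeonholing in $|f|$. This produces the constraint $q'\le \frac35 p$. I expect this coefficient to come from the trilinear/transversal component, which is independent of the new incidence input: in \cite{GWX25} the same relation appears and depends only on $n=4$. I would check this by tracking where $b$ enters, and should find that the condition $b\le\frac{d-1}{3}$ is exactly what makes the linear (two-ends) term not worse than the $q$-constraint from the broad part. This is why the author stresses that improvement in $b$.

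The third step is to plug in the numbers. We interpolate between the planebrush estimate, the new $M$-dependent estimate, and the trilinear estimate, giving the admissible triples. We then optimize $p_0(d,a,b)$ over the convex combinations and verify that the minimum equals $\frac{786}{281}$. This should be a finite linear-fractional optimization that can be checked directly.

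The main obstacle is the first step. In \cite{GWX25} the dependence on $b$ is hidden inside specific numerical choices, so it has to be extracted. One must check that every use of the Kakeya/two-ends input in their induction-on-scales argument is monotone in $b$ and tolerates $b<1$. This matters especially at the stage where the broad/narrow split is balanced against the multiplicity bound. If one exponent fails to close, I would first try rebalancing the scale parameter $K$ in the broad–narrow decomposition as a function of $b$.
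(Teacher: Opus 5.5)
\textbf{There is a genuine gap: the incidence input you feed into the argument cannot give the corollary.} Your overall shape matches Proposition~\ref{b-dependentdec}: a $b$-dependent rerun of the refined-decoupling/incidence argument, followed by a level-set decomposition in $|f|$. In that argument the usually discarded factor $(X/m)^{1-b}$, with $X=\delta^3\#\mathbb T$, is kept and converted into $(\|f\|_{L^2}/\sup_\theta\|f_\theta\|_{L^2_{\mathrm{avg}}})^{2(1-b)}$. The resulting endpoint is $L^{q_d}\to L^{p_d}$ with $p_d=\frac{2(d+4)}{d+2}$ and $q_d'=\frac{2(d+4)}{d+4+2b}$.

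This endpoint lies on the line $q'=\frac35p$ exactly when $b=\frac{d-1}{3}$, and gives $q_d'<\frac35p_d$ when $b>\frac{d-1}{3}$. So the coefficient $\frac35$ is not supplied by a trilinear or broad component independent of the new input; the paper's argument has no broad--narrow balancing, and the scaling line is reached only through $b$.

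Your inputs fail this test.
\begin{itemize}
\item $\mathbf{TE}(\frac{269}{89},\frac{179}{89},\frac{493}{534})$ has $b=\frac{493}{534}>\frac{60}{89}=\frac{d-1}{3}$. It gives $q_d'=\frac{1875}{1184}<\frac{750}{447}=\frac35p_d$, and interpolating with Stein--Tomas does not recover the scaling line at $p=\frac{786}{281}$.
\item The Kakeya estimate $\mathbf{TE}(3.06\ldots,3.06\ldots,0.94\ldots)$ has $\lambda$-exponent about $3.06$, far above $\frac{d+1}{2}$, so it is not restriction-compatible at all.
\item Optimizing over $G^wP^uR^z$ cannot help either. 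On the $\rho$-neutral line $u=\frac{21-9w}{23}$, the requirement $b\le\frac{d-1}{3}$ forces $w\ge\frac5{12}$, while $z\ge0$ forces $w\le\frac17$. Off that line the constraints remain incompatible once the $\lambda$-exponent must be at most $\frac{d+1}{2}$.
\end{itemize}
A useful hint is that $\frac{786}{281}$ is larger than the diagonal exponent $\frac{1250}{447}$. This signals that a different estimate is used, with smaller $d$ but critical $b$.

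\textbf{The missing ingredient is the transverse two-ends hairbrush bound $H$.} It reads $\mu_1\lessapprox C_1^{C/\varepsilon_1}\lambda_1^{-3/4}\delta^{-1}X_1^{1/2}$. It comes from the second part of Lemma~\ref{hairbrushest} applied to the fine Katz--Zahl family, which inherits the required angular non-concentration from robust transversality by Corollary~\ref{cor:fine-transverse}. Its factor $X_1^{1/2}$, with no $\rho$-loss and only $\lambda_1^{-3/4}$, is what pushes $b$ down.

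The paper's route is then:
\begin{enumerate}
\item Interpolate $G^{1/14}P^{3/7}H^{1/2}$. The $\rho$-powers cancel, and the density-ratio factor $(\lambda_\rho/\lambda_3)^{-1/8}\lessapprox1$ is dropped. This gives $\mu_1\lessapprox C_1^{C/\varepsilon_1}\lambda_1^{-55/56}\delta^{-55/56}X_1^{55/168}$, that is, $\mathbf{RT}(\frac{169}{56},\frac{111}{56},\frac{113}{168})$ with $\frac{113}{168}=\frac{d-1}{3}$ exactly.
\item Proposition~\ref{RTtoTE} upgrades this to the corresponding $\mathbf{TE}$ estimate.
\item Weaken the $\lambda$-exponent to $\frac{225}{112}=\frac{d+1}{2}$.
\item Proposition~\ref{b-dependentdec} then yields the endpoint at $p_d=\frac{786}{281}$ with $q_d'=\frac35p_d$.
\item Interpolation with Stein--Tomas and the trivial $L^1\to L^\infty$ bound, plus H\"older on the frequency side, gives the full range.
\end{enumerate}
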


The geometric content of the argument becomes clearer when viewed through the Kakeya problem. Since a wave-packet is concentrated on a thin tube, a large value of $E_Sf$ may arise when many such tubes overlap.  Restriction estimates therefore require quantitative control on how efficiently tubes pointing in
different directions can concentrate in a small region. Kakeya theory isolates precisely this geometry.

A Kakeya set in $\mathbb{R}^n$ is a set containing a unit line segment in every direction. The Kakeya conjecture predicts that such a set must have full Hausdorff dimension.

\begin{conjecture}[Kakeya conjecture]\label{conj:kakeya}
If $K\subset\mathbb{R}^n$ is a Kakeya set, then
    $\dim_{\mathbb H}K=n$.
\end{conjecture}

The conjecture was proved in two dimensions by Davies in \cite{Dav71}. Recently, Wang and Zahl resolved the three-dimensional case in \cite{WZ25}.  
The four-dimensional Kakeya problem has a long history. Drury obtained the Hausdorff dimension lower-bound $\frac{5}{2}$ in \cite{Dru83}. Wolff's hairbrush argument then raised the corresponding bound to $3$ in \cite{Wol95} by means of approaching a stronger quantitative version of this conjecture, denoted by Kakeya maximal conjecture. The statement goes as follows:
\begin{conjecture}[Kakeya maximal estimate at dimension $d$]
Let $\mathbb{T}$ be a set of $\delta$-tubes in $\mathbb{R}^n$ that point in
$\delta$-separated directions. Then, for each $d \leq n$ and $\varepsilon > 0$,
there exists a constant $C_\varepsilon > 0$ such that
\begin{equation}
\left\|
\sum_{T \in \mathbb{T}} \mathds{1}_T
\right\|_{L^\frac{d}{d-1}(\mathbb{R}^n)}
\leq
C_\varepsilon
\left(\frac{1}{\delta}\right)^{\frac{n}{d}-1+\varepsilon}.
\end{equation}
\end{conjecture}

Breaking this threshold requires
understanding configurations in which the tubes possess an additional algebraic or planar structure. Guth and Zahl introduced polynomial Wolff axioms and obtained a Kakeya maximal estimate at dimension $3+\frac1{40}$ in
\cite{Zah18,GZ18}. Katz and Zahl subsequently developed the planebrush argument, leading to a Kakeya maximal exponent of roughly $3.049$ and a Hausdorff dimension lower-bound of approximately $3.059$ in \cite{KZ}. Combining the planebrush decomposition with the two-ends incidence mechanism, Borges et al. also in \cite{BCCLXZ} obtained a strictly better exponent $d=\frac{159+\sqrt{145}}{56}\approx 3.05431$ for the Kakeya maximal estimate. By combining similar interpolation methods as in the proof of Theorem~\ref{thm:main-restriction} and a general bootstrap method based on the one proved by Katz--Zahl, we obtained a better dimensional exponent.
\begin{theorem}
\label{vera3}
Let $\mathbb{T}$ be a set of $\delta$-tubes in $\mathbb{R}^4$ that point in
$\delta$-separated directions. Then, for each $\varepsilon > 0$, there
exists a constant $C_\varepsilon$ such that
\begin{equation}
\left\|
\sum_{T \in \mathbb{T}} \mathds{1}_T
\right\|_{L^\frac{d}{d-1}(\mathbb{R}^4)}
\leq
C_\varepsilon
\left(\frac{1}{\delta}\right)^{\frac{4}{d}-1+\varepsilon},
\end{equation}
where
$d =
\frac{
3821166286084225+
86718963240\sqrt{17665}
}{
1252282159099828
}$.
\end{theorem}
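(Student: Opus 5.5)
The plan is to deduce Theorem~\ref{vera3} from a two-ends incidence estimate $\mathbf{TE}(d,a,b)$ with $d$ the stated quadratic irrational, via a Katz--Zahl type bootstrap. The central point is that the maximal estimate at dimension $d$ reduces, by standard dyadic pigeonholing, to a statement about shadings. One considers families of tubes with $\delta$-separated directions, each carrying a shading $Y(T)\subset T$ with $|Y(T)|\gtrsim\lambda|T|$ that satisfies the two-ends condition, and one must bound $|\bigcup_T Y(T)|$ from below by $\delta^{4-d+\varepsilon}\lambda^{a}(\#\mathbb{T}\,\delta^{3})^{b}$. The reduction from the $L^{d/(d-1)}$ norm to this union bound is the usual one: duality, pigeonholing of the multiplicity $\mu$ and of the density $\lambda$, and the two-ends reduction of Wolff/Tao. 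It costs only $\delta^{-\varepsilon}$ losses, so I would take it from the literature, for instance as in \cite{BCCLXZ}.

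The core step is to establish $\mathbf{TE}(d,a,b)$ for the value of $d$ in the statement. I would follow the Katz--Zahl scheme and run an induction (bootstrap) on the dimension exponent. Assume $\mathbf{TE}(d_0,\cdot,\cdot)$ holds for some $d_0<d$. Decompose the arrangement at each point according to whether the tubes through it are \emph{planar}, meaning they cluster near a $\delta^{\beta}$-neighbourhood of a 2-plane. This uses the latent parameter $M$, the number of tubes in a plane-brush, or equivalently the planiness scale.

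Three lower bounds for the volume of the union then come into play.
\begin{enumerate}[label=(\roman*)]
\item The planebrush estimate of \cite{KZ}. It is strong when $M$ is large, i.e.\ when many tubes lie in a common plane slab.
\item The trilinear/multilinear Kakeya bound. It is strong in the non-planar (broad) case.
\item A new $M$-dependent two-ends estimate. This comes from the Wang--Wu decoupling/incidence mechanism applied inside the plane slabs. There one rescales and invokes the known three-dimensional Kakeya/incidence input, such as the 2D Córdoba bound or the $\mathbb{R}^3$ results of \cite{WZ25}, with the induction hypothesis applied at coarser scales.
\end{enumerate}
Each bound has the form $|\bigcup Y|\gtrsim \delta^{-\epsilon}\lambda^{\alpha_i}(\#\mathbb{T}\delta^3)^{\beta_i}M^{\gamma_i}\delta^{e_i}$. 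Taking a weighted geometric mean with weights chosen to cancel the $M$-dependence (and the dependence on the auxiliary planarity scale) yields a new $\mathbf{TE}(d_1,a_1,b_1)$ with $d_1=F(d_0)$.

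The final step is to iterate $d_{k+1}=F(d_k)$. Starting from Wolff's $d_0=3$ (or from \cite{BCCLXZ}'s $3.054$), $F$ is increasing and the iterates converge to the fixed point $d^*=F(d^*)$. The interpolation weights are rational functions of $d$, so the fixed-point equation reduces to a quadratic. Its larger root should be the stated $\frac{3821166286084225+86718963240\sqrt{17665}}{1252282159099828}\approx3.06056$. For every $d<d^*$ one obtains the estimate in finitely many steps, with $\varepsilon$-losses absorbed. Then one notes that the bound is open at $d^*$ itself, or that $F(d^*)=d^*$ can be reached with an $\varepsilon$-loss, which suffices since the theorem already allows $\delta^{-\varepsilon}$.

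I expect the main obstacle to be the new $M$-dependent estimate (iii) together with the optimization of the interpolation. Any slack in the exponent of $M$ destroys the improvement over \cite{BCCLXZ}. One must also check that the optimal weights are nonnegative and satisfy $b\le\frac{d-1}{3}$ at the fixed point. I would first carry out the optimization symbolically with $d$ free, confirm that the regime where all three estimates are active contains $d^*$, and only then verify the discriminant $17665$ arising in the quadratic.
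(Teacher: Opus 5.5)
Your outer frame is right: reduce the maximal inequality to a two-ends estimate, then bootstrap in the style of Katz--Zahl. But the engine you propose does not produce this $d$, and the fixed-point picture is wrong.

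The bootstrap that works here is Lemma~\ref{lem:general-xray-bootstrap}. It interpolates the coarse-scale planebrush bound with the \L aba--Tao X-ray estimate, feeds the assumed $\mathbf{TE}(4-a,1+\ell,1-x)$ into the rescaled fine descendants of each $\rho$-tube, and cancels the leftover $\rho$-power against the trilinear bound $G$. It is admissible only when $0\le x\le\frac{7}{16}$ and $a-1+\frac{4x}{7}\ge0$. On both branches the paper uses, this condition fails after two iterations. So there is no convergent sequence $d_{k+1}=F(d_k)$ and no fixed-point quadratic.

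The surd $\sqrt{17665}$ is not generated by the argument at all. It is imported from the Katz--Zahl seed $\mathbf{TE}(4-\alpha,\frac{75}{28},1-\frac{\alpha}{3})$ with $\alpha=\frac{257-\sqrt{17665}}{128}$, which the paper bootstraps twice to reach dimension $\approx3.065$. Separately, the paper bootstraps twice the scale-free interpolation $G^{1/14}P^{3/7}H^{1/2}$, reaching dimension $\frac{1471405}{480964}\approx3.0593$. Here $H$ is the transverse hairbrush bound $\mu_1\lessapprox\lambda_1^{-3/4}\delta^{-1}X_1^{1/2}$ from the second part of Lemma~\ref{hairbrushest}.

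The stated value comes from interpolating these two outputs, and this exposes a second gap. You treat the passage from $\mathbf{TE}(d,a,b)$ to the $L^{\frac{d}{d-1}}$ bound as a black box with $a$ free. That reduction (Proposition~11 of \cite{KZ}) is applied to an estimate of the form $\mathbf{TE}(d,d,b)$, i.e.\ it needs the $\lambda$-exponent to be at most $d$.

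The Katz--Zahl branch has $\lambda$-exponent $\approx3.491$, which is good only for the Hausdorff bound of Corollary~\ref{Kakeyalowerbound}. The $G,P,H$ branch has slack, since $\frac{128375}{43724}<\frac{1471405}{480964}$. Choosing the interpolation weight so that the interpolated $\lambda$-exponent equals the interpolated dimension yields $\mathbf{TE}(d,d,b)$ with exactly the stated $d\approx3.06056$.

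Your ingredient (iii) also belongs to the restriction proof, not this one. The $M$-dependent bound is the estimate $R$ used for Theorem~\ref{thm:main-restriction}, and the paper notes that adding it does not improve the Kakeya exponent. Decoupling and the three-dimensional results of \cite{WZ25} play no role here.

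What your proposal is missing is:
\begin{itemize}
\item the \L aba--Tao X-ray estimate;
\item the transverse hairbrush bound $H$;
\item the Katz--Zahl seed estimate;
\item the final diagonal interpolation.
\end{itemize}
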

\newpage
As a consequence of the previous result and Katz--Zahl estimates, we also get
\begin{corollary}
\label{Kakeyalowerbound}
For every Kakeya set in $\mathbb{R}^4$ one has that its Hausdorff dimension is at least 
$\frac{46480643+10717\sqrt{17665}}{15629688}
\geq
3.065.$
\end{corollary}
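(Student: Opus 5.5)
The plan follows the Katz--Zahl route from a Kakeya maximal estimate to a Hausdorff dimension bound, with Theorem~\ref{vera3} fed in as the key new input. Katz and Zahl in \cite{KZ} obtain their Hausdorff bound $\approx 3.059$ from two ingredients. The first is a maximal-type (or multiscale incidence) bound. The second is an additional planebrush/plany structural estimate that holds at the level of sets but not in the $L^{d/(d-1)}$ norm. The Hausdorff bound comes out of an optimisation in which the maximal exponent enters as a parameter. I will therefore reproduce their final optimisation with $d$ replaced by the exponent of Theorem~\ref{vera3}.

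First I will recall the standard reduction. If $K$ is a Kakeya set with $\dim_{\mathbb H}K<\beta$, one covers $K$ by dyadic balls and pigeonholes a scale $\delta$. This produces a family $\mathbb{T}$ of $\delta$-tubes in $\delta$-separated directions, $|\mathbb{T}|\approx\delta^{-3}$, together with shading sets $Y(T)\subset T$ of measure $\gtrsim|T|/\log$. Their union is covered by $\lesssim\delta^{-\beta}$ $\delta$-balls.

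Second, I will split into the two Katz--Zahl regimes.

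\textbf{Regime 1: non-plany tubes.} Here the Kakeya maximal estimate of Theorem~\ref{vera3} at exponent $d$ is applied directly through the usual Hölder argument. This forces $\beta\ge d$, up to losses that depend on a plane-concentration parameter $\sigma$.

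\textbf{Regime 2: plany configurations.} In this regime tubes through a point concentrate near $2$-planes. The Katz--Zahl planebrush/grain argument gives a lower bound $\beta\ge F(\sigma)$ for an explicit function $F$ taken from \cite{KZ}. Their statements are uniform in the input maximal exponent, so I can take them as given.

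Third, combining the two regimes yields
\[\dim_{\mathbb H}K\ge \min_{\sigma}\max\{G(d,\sigma),F(\sigma)\}.\]
The minimum occurs where the two curves cross, which is a quadratic equation in $\sigma$. This explains the $\sqrt{17665}$ appearing in both Theorem~\ref{vera3} and the corollary. Substituting the given $d$ and solving gives $\frac{46480643+10717\sqrt{17665}}{15629688}$, and a numerical check confirms this is $\ge 3.065$.

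The main obstacle is identifying exactly which Katz--Zahl inequality carries the dependence on the maximal exponent. One must also verify that their Hausdorff-dimension argument only uses the maximal estimate as a black box, so that no structure specific to their exponent $\approx3.049$ is used. I would first check this by rederiving their $3.059$ from their own $d$ with the same formula. After that, I would redo the crossing computation with the new $d$.
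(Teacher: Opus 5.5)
Your route has a genuine gap: it goes through the wrong estimate. A Kakeya maximal estimate at dimension $d$ gives only $\dim_{\mathbb H}K\ge d$, and the exponent of Theorem~\ref{vera3} is $d\approx 3.06056<3.065$. Your Regime~1 gives at most $d$. So any bound above $d$ in $\min_\sigma\max\{G(d,\sigma),F(\sigma)\}$ would have to come entirely from the unspecified $F$, and Theorem~\ref{vera3} would play no role. The final ``substituting and solving'' step is asserted, not computed, and no $F$ or $G$ is ever written down.

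In the paper the logical dependence runs the other way. The estimate that gives the corollary is an \emph{input} to Theorem~\ref{vera3}. In Section~\ref{diagonalinterpolation} it is interpolated with the twice-iterated $G,P,H$ estimate to bring the $\lambda$-exponent down to the dimension, and this costs dimension ($3.065\to3.06056$). Likewise, Katz--Zahl's Hausdorff bound is not an optimisation taking their maximal exponent as a parameter. Your proposed check, rederiving $3.059$ from $3.049$ by ``the same formula'', would therefore fail.

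The missing idea is that for Hausdorff dimension only the first parameter of a two-ends estimate $\mathbf{TE}(d,a,b)$ matters. After the standard reduction the shadings are $\gtrapprox1$-dense and $\delta^3\#\mathbb T\approx1$. Hence any $\lambda$-exponent $a$ is acceptable, even $a>d$, which is useless for the maximal function, and $\mathbf{TE}(d,a,b)$ already gives $\dim_{\mathbb H}K\ge d$ (remark in Definition~1.2 of \cite{BCCLXZ}).

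The paper starts from Katz--Zahl's $\mathbf{TE}(4-\alpha,\frac{75}{28},1-\frac{\alpha}{3})$, with $\alpha=\frac{257-\sqrt{17665}}{128}$ (Equation (111) of \cite{KZ}). It applies Lemma~\ref{lem:general-xray-bootstrap} twice. Each application chains the following steps:
\begin{enumerate}
\item the \L aba--Tao bound $L$ on the coarse $\rho$-tubes, interpolated with the planebrush bound;
\item property (xi) to pass to $\mu_1$;
\item the assumed $\mathbf{TE}$ estimate on the rescaled fine descendants, via the quantitative two-ends reduction;
\item interpolation with $\mu_1\le\rho^{-1}\widetilde\mu$;
\item the trilinear bound $G$ to cancel the $\rho$-power.
\end{enumerate}
At each step one checks $0\le x\le\frac{7}{16}$, $a-1+\frac{4x}{7}\ge0$, and $x_*\le\frac{a_*}{3}$, so that Proposition~\ref{RTtoTE} upgrades $\mathbf{RT}$ to $\mathbf{TE}$.

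One step reproduces exactly Katz--Zahl's $3+\frac{\sqrt{17665}-97}{600}$; this is their Lemma~38, see Remark~\ref{specialization}. The second step gives $4-a_{2,KZ}=\frac{46480643+10717\sqrt{17665}}{15629688}$ with $\lambda$-exponent $\approx3.49$. That is why it yields a set bound but not a maximal bound. In particular, the $\sqrt{17665}$ is inherited from $\alpha$ through a map that is rational in $(a,x)$; it does not arise from solving a new quadratic. A quadratic crossing whose coefficients already involve the $d$ of Theorem~\ref{vera3} would generically produce nested radicals, not a number of the form $\frac{A+B\sqrt{17665}}{C}$.
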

Moreover, if we focus our attention only on the sticky case, the conjecture was recently solved by Wang and Zakharov in \cite{wang2026kakeya}, improving the lower-bound $3.25$ previously established by Choudhuri in \cite{choudhuri2024improved}.

In the end of each main result, that is, Theorem~\ref{thm:main-restriction}, Theorem~\ref{vera3} and part of Corollary~\ref{offdiagresult}, we also prove that with the upper-bounds on the average multiplicity developed in the literature to the present day, each of them is ``essentially" sharp.
\\
\noindent\textbf{Notation.}
\begin{itemize}
    \item We write $A\lesssim B$ if $A\le CB$ for a constant $C$ depending only on fixed parameters. Lower-scripts are used when we want to make explicit the parameters that this constant depends on. All implicit constants in this case are independent of $\delta$;
    \item With $\delta\leq 1$, $A\lessapprox B$ means that, for every $\varepsilon>0$, there exists a constant $C_{\varepsilon}$,
depending only on $\varepsilon$, such that $A \le C_{\varepsilon} \delta^{-\varepsilon} B$;
\item Moreover, $A\gtrsim B$ and $A\gtrapprox B$ are defined similarly for the reverse inequality, that is $B\lesssim A$ and $B\lessapprox A$, respectively. If $A\lesssim B$ and $A\gtrsim B$, then $A\sim B.$ Identically, if one has $A\lessapprox B$ and $A \gtrapprox B$, we say $A\approx B$. We also use the latter symbol to denote numerical approximations; 
\item For $\Omega\subset \mathbb{R}^4$, we denote by $|\Omega|$ the $(\dim_{\mathbb{H}}\Omega)$-Hausdorff measure of $\Omega.$ For a finite set $E$, we define the quantity $\# E$ to be its cardinality;
\item We denote $B^k(a,\varrho)$ to be the $k$-dimensional ball with center in $a\in \mathbb{R}^n$ and radius $\varrho>0$, for $1\leq k\leq n$ integer. For $k=n$, we drop the upper-script.
\end{itemize}
\noindent\textit{AI use Statement.} All mathematical content of the paper is solely due to the author. LLMs were used only 
at a later stage of manuscript preparation, mainly for reference matching and for checking certain passages of the presented proofs.
\section{Preliminaries}
This section follows closely the section with similar name in \cite{BCCLXZ} and has the goal of introducing the reader to the definitions and incidence results used in this paper.
\begin{definition}
A $\delta$-tube is the $\delta$-neighborhood of a unit line segment in $\mathbb{R}^4$. A $\delta$-cube is a set of the form $Q = [0, \delta)^4+v$, where $v\in(\delta\mathbb{Z})^4.$
\end{definition}
\begin{definition}[Shading]
Let $T$ be a $\delta$-tube. A shading of $T$ is a set $Y(T)$ that is a union of $\delta$-cubes contained in $T$. Let $\mathbb{T}$ be a family of $\delta$-tubes and
for each $T\in\mathbb{T}$, let $Y(T)$ be a shading of $T$. We denote the pair $(\mathbb{T},Y)$ as a family of tubes and their associated shading.
\end{definition}
\begin{definition}[Refinement]
Let $(\mathbb{T},Y)$ be a set of tubes and their associated shading.
We say that a pair $(\mathbb{T}',Y')$ is a $t$-refinement of $(\mathbb{T},Y)$ if $T'\subset T, Y'(T) \subset Y(T)$, for each $T\in \mathbb{T}$, and 
$$\sum_{T\in\mathbb{T}'} |Y'(T)|\geq t \sum_{T\in\mathbb{T}}|Y(T)|.$$
In the case $t\sim|\log(\delta)|^{-1}$, we use the symbol $\gtrapprox$ to absorb the $t$-contribution.
\end{definition}

Now, it is important to define some essential quantities associated with the pair
$(\mathbb{T},Y)$ in order to have a minimal characterization of its nature.
\begin{definition}
Fix $(\mathbb{T}, Y)$ a set of $\delta$-tubes and their associated shading. We define the sets: $E_Y:=\bigcup_{T\in\mathbb{T}}Y(T)$ and for each $\delta$-cube $Q$,
$\mathbb{T}_Y(Q)
:=
\{T\in\mathbb{T}:Q\subset Y(T)\}$. Moreover, let
$\mathcal{Q}(Y)$ be the set of $\delta$-cubes that are contained in at least one shading $Y(T)$, for some tube $T\in\mathbb{T}$. 
\end{definition}
\begin{definition}
We define the average shading density and average multiplicity of the family of
tubes with shading $(\mathbb{T},Y)$ by, respectively,
$$
\lambda_Y
:=
\frac{1}{\#\mathbb{T}}
\sum_{T\in\mathbb{T}}
\frac{|Y(T)|}{|T|},
$$
and
$$
\mu_Y
:=
\frac{1}{\#\mathcal{Q}_Y}
\sum_{Q\in \mathcal{Q}_Y}
\#\mathbb{T}_Y(Q).
$$
We also say that $(\mathbb{T},Y)$ is $\lambda_Y$-dense if $\frac{|Y(T)|}{|T|}\sim \lambda_Y$, for every $T\in\mathbb{T}$.
\end{definition}

One main object of interest is studying families of tubes with associated shading that have a particular kind of structure, which was proposed in \cite{GZ18} and \cite{KZ}. 

\begin{definition}[Plany]
Let $(\mathbb{T},Y)$ be a set of $\delta$-tubes and their associated shading. We say that $(\mathbb{T},Y)$ is plany if for each $\delta$-cube $Q$,
there is a plane $\Pi(Q)$ so that for all $T\in\mathbb{T}_Y(Q)$ we have $\angle(V(T),\Pi(Q))\lesssim\delta$,
where $V(T)$ is the direction of $T$.
\end{definition}

For prescribing families of tubes whose shading does not concentrate ``too much'', we define the so called two-ends condition also introduced in \cite{KZ}.

\begin{definition}[Two-ends condition]
Let $(\mathbb{T}, Y)$ be a set of $\delta$-tubes and their associated shading. We say that $(\mathbb{T}, Y)$ is $(\varepsilon_1,C_1)$-two-ends if, for all $T \in \mathbb{T}$ and all $\delta \le r \le 1$, we have
\[
\#
\left\{
Q : Q \subset Y(T) \cap B(x,r)
\right\}
\leq C_1
r^{\varepsilon_1} \lambda_Y \delta^{-1}, \,\, \text{for all balls } B(x,r).
\]
\end{definition}

Intuitively, the above definition corresponds to families in which the associated shading of each tube cannot concentrate at a large amount on balls. Note that this property together with $\lambda$-dense shading implies Definition~$1.21$ in \cite{WW24} (for instance, take $B(x,r)$ containing the tube $J$ and fix $r=\delta^{\varepsilon_1}$), which represents a weaker version of the two-ends definition. This remark is important due to the fact that allows us to use results from both sources that ask for families of tubes satisfying one of the possible two-ends condition.
\begin{remark}
\label{epsilon1remark}
Throughout the paper, whenever a final loss $\delta^{-\varepsilon}$ is allowed, the exponent in the two-ends condition will be chosen sufficiently small in terms of $\varepsilon$. More precisely, if a shading satisfies the $(\varepsilon_{1},C_{1})$-two-ends condition, then it also satisfies the $(\varepsilon_{1}^*,C_{1})$-two-ends condition, for every
$0<\varepsilon^*_1\leq \varepsilon_{1}$. We may therefore take $\varepsilon^*_1>0$ sufficiently small relative to the prescribed $\varepsilon>0$. Note that this is only useful for $C_1$ independent of $\delta$. When $C_1$ depends on $\delta$, we instead keep track explicitly of the additional factor $C_1^{-\frac{C}{\varepsilon_1}}$.
\end{remark}

For a condition over the directions of the tubes, we define the following:

\begin{definition}[Robust transversality]
Let $\delta\in(0,1)$ and let $(\mathbb{T},Y)$ be a $\delta$-separated tube family with associated shading. Let $0<\varepsilon_2<1$. We say $Y$ is $(\varepsilon_2,C_2)$-robustly transverse if for all $\delta$-cubes $Q$, all vectors
$W$, and any $\delta<r<1$,
\[
\#\bigl\{
T\in\mathbb{T}_Y(Q):
\angle(V(T),W)\le r
\bigr\}
\leq C_2
r^{\varepsilon_2}\mu_Y.
\]
\end{definition}
Moreover, we can describe families of tubes with a bounded number of elements with ``similar" direction.
\begin{definition}[$m$-parallel]
    A family of $\delta$-tubes $\mathbb T$ is $m$-parallel if, for every $\delta$-ball $B\subset \mathbb{S}^{3}$, there are at most $\lesssim m$ tubes $T\in\mathbb T$ whose directions lie in $B$.
\end{definition}

We will use a two-ends hairbrush estimate derived from the work of Wang--Wu and Gao--Wu--Xi \cite{WW24,GWX25}. Its proof combines the planar two-ends Furstenberg estimate of Wang--Wu developed in \cite{wang2025twoends} with Wolff's hairbrush argument of \cite{Wol95}. Moreover, we prove a stronger estimate in the case where the family has ``few angular concentration". We remark that these estimates were already proven in Lemma $4.1$ in \cite{GWX25} but for our purposes a dependence on the factor $C_1^{\frac{C}{\varepsilon_1}}$ needs to be tracked which asks for additional justification rather than just citing the literature. Before establishing both results, we prove a quantitative planar localization argument.

\begin{lemma}[Quantitative planar two-ends estimate]
\label{quant-planar}
Let $(\mathbb T,Y)$ be a $1$-parallel family of $\delta$-tubes in $\mathbb R^2$, with $(\varepsilon_1,C_1)$-two-ends,
$\lambda_Y$-dense shading, where
$0<\varepsilon_1\leq 1$ and $C_1\geq1$. Assume that, for every $T\in\mathbb T$, $|Y(T)|\geq\lambda_Y\delta$,
and, for every longitudinal interval
$I$ of length $\delta\leq u\leq1$,
\[
|Y(T)\cap I_\delta|
\leq C_1u^{\varepsilon_1}|Y(T)|,
\]
where $I_\delta$ denotes the
$\delta$-neighborhood of $I$. Then, for every $\varepsilon>0$,
\begin{equation}
\label{quant-planar-bound}
|E_Y|
\gtrsim_{\varepsilon,\varepsilon_1}
C_1^{-\frac{C}{\varepsilon_1}}
\delta^\varepsilon
\lambda_Y^{\frac12}
\sum_{T\in\mathbb T}|Y(T)|,
\end{equation}
where $C>0$ is an absolute constant,
independent of $\varepsilon$,
$\varepsilon_1$, $C_1$ and $\delta$.
\end{lemma}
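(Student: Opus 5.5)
The plan is to reduce Lemma~\ref{quant-planar} to the planar two-ends Furstenberg estimate of Wang--Wu \cite{wang2025twoends}, tracking how the constant $C_1$ enters. That estimate is stated for families whose two-ends condition holds with a fixed constant, and it gives $|E_Y|\gtrapprox \lambda_Y^{1/2}\sum_T|Y(T)|$. The key observation is that a two-ends condition with constant $C_1$ at exponent $\varepsilon_1$ becomes a condition with constant $1$ at every scale $u\le u_0:=C_1^{-2/\varepsilon_1}$. Indeed, there $C_1u^{\varepsilon_1}\le u^{\varepsilon_1/2}$. So I will first localize each tube to a segment of length comparable to $u_0$ and apply the estimate on that scale.

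\textbf{Step 1 (localization).} Cover each $T$ by $\sim u_0^{-1}$ longitudinal intervals $I$ of length $u_0$. By the hypothesis, each interval carries at most $C_1u_0^{\varepsilon_1}|Y(T)|=C_1^{-1}|Y(T)|\le|Y(T)|$ of the shading. Since the intervals partition $T$, pigeonholing gives a collection of intervals carrying a $\gtrsim 1$ fraction of $|Y(T)|$, with mass in each roughly at least $u_0|Y(T)|$ up to dyadic pigeonholing losses. Then pigeonhole over the $u_0^{-2}$ positions of the $u_0$-squares in a grid (and translates) containing the short segments. This yields a square $\Box$ of side $u_0$ in which the short tubes $T\cap\Box$ carry $\gtrsim u_0^{C}\sum_T|Y(T)|$ mass. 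Better: apply the argument in every square and sum, which avoids losing mass. Rescaling $\Box$ to the unit square turns the pieces into $\delta/u_0$-tubes, still essentially $1$-parallel after accounting for the $\lesssim 1$ overlap of directions. The new density is $\lambda'\gtrsim\lambda_Y$ on the retained pieces, after dyadically pigeonholing densities, and the two-ends constant is now $\lesssim 1$ at exponent $\varepsilon_1/2$. The assumption $|Y(T)\cap I_\delta|\le C_1u^{\varepsilon_1}|Y(T)|$ with $|Y(T)|\ge\lambda_Y\delta$ ensures that the rescaled shading inherits the interval condition relative to its own mass.

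\textbf{Step 2 (apply Wang--Wu).} In each square, the two-ends planar Furstenberg estimate at scale $\delta/u_0$, with $\varepsilon_1/2$-two-ends, gives $|E_Y\cap\Box|\gtrsim_{\varepsilon,\varepsilon_1}(\delta/u_0)^{\varepsilon}\lambda'^{1/2}\sum_T|Y(T)\cap\Box|$. Summing over the disjoint squares and using $\lambda'\gtrsim u_0^{O(1)}\lambda_Y$ produces the bound with a loss $u_0^{O(1)}=C_1^{-C/\varepsilon_1}$.

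\textbf{Main obstacle.} The delicate point is the density bookkeeping in the rescaling. The local density $\lambda'$ of a piece may be much smaller than $\lambda_Y$ if the shading is spread thinly. It must therefore be dyadically pigeonholed so that the retained pieces have comparable density $\lambda'$. One then has to check that $\lambda'^{1/2}$ times the retained mass is at least $u_0^{O(1)}\lambda_Y^{1/2}\sum|Y(T)|$. Since $\lambda'\ge$ (mass of piece)$/(\delta u_0)$ and the pieces carry a fixed fraction of the mass, this should hold with only the $\log$ and $u_0^{O(1)}$ losses. I would verify this carefully, checking that $C$ does not depend on $\varepsilon$.
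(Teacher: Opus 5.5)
\textbf{There is a genuine gap in Step~1.} The rescaled pieces are \emph{not} two-ends with constant $\lesssim 1$, and the sentence claiming that the rescaled shading ``inherits the interval condition relative to its own mass'' is false.

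Both the hypothesis and your observation $C_1u^{\varepsilon_1}\le u^{\varepsilon_1/2}$ for $u\le u_0=C_1^{-2/\varepsilon_1}$ bound $|Y(T)\cap J_\delta|$ relative to the \emph{total} mass $|Y(T)|$. A surviving $u_0$-piece, however, is only guaranteed mass $\gtrsim u_0|Y(T)|$. Relative to its own mass you therefore only get, for $J\subset I$ of rescaled length $s=u/u_0$,
$|Y(T)\cap J_\delta|\lesssim C_1u_0^{\varepsilon_1-1}s^{\varepsilon_1}|Y(T)\cap I_\delta|=C_1^{\frac{2}{\varepsilon_1}-1}s^{\varepsilon_1}|Y(T)\cap I_\delta|$.
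That constant is worse than the one you started with.

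This really occurs. In each of the $u_0^{-1}$ intervals of your partition of $T$, shade only an initial subsegment of length $u_1=u_0^{\frac{1}{\varepsilon_1}+\frac12}$. A direct check shows this shading is two-ends with exponent $\varepsilon_1$ and constant $O(C_1)$. Every $u_0$-piece carries exactly $u_0|Y(T)|$, so all of them survive your pigeonholing. Yet all of a piece's mass lies in a sub-piece of relative length $u_0^{\frac{1}{\varepsilon_1}-\frac12}$, so no absolute-constant two-ends condition holds once $C_1$ is large. In this example the whole tube is already $(\frac{\varepsilon_1}{2},O(1))$-two-ends; cutting at the fixed scale $u_0$ is what destroys that property. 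The density bookkeeping you flag as the main obstacle is in fact harmless.

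\textbf{Why this matters.} Suppose you repair this by feeding Wang--Wu only its one-scale hypothesis: relative mass $\le(\delta/u_0)^{\tau}$ on sub-segments of rescaled length $(\delta/u_0)^{\sigma}$, with $\sigma$ small in terms of $\varepsilon$. Then the constant $C_1^{2/\varepsilon_1-1}$ must be beaten by $(\delta/u_0)^{\sigma\varepsilon_1-\tau}$. In the complementary range of $\delta$, the trivial bound $|E_Y|\gtrsim\delta\sum_T|Y(T)|$ only gives a factor $C_1^{-C'}$ with $C'\ge\frac{1}{\sigma\varepsilon_1^{2}}$. That exponent depends on $\varepsilon$, which is exactly what the lemma rules out. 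It is also no better than applying Wang--Wu directly to the unlocalized family.

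\textbf{The missing idea} is the maximal-concentration localization used in the paper. For each $T$, choose the dyadic interval $I_T$ maximizing $\nu_T(I)/|I|^{\varepsilon_1/2}$, where $\nu_T(I)=|Y(T)\cap I_\delta|/|Y(T)|$. This buys three things.
\begin{itemize}
\item Since the maximum $M_T$ is at least $1$, the hypothesis forces $r_T=|I_T|\ge(CC_1)^{-2/\varepsilon_1}$.
\item Maximality gives $|Y(T)\cap J_\delta|\lesssim(u/r_T)^{\varepsilon_1/2}|Y(T)\cap(I_T)_\delta|$ for \emph{every} interval $J$ of length $u\le r_T$, with an absolute constant.
\item The retained fraction is $M_Tr_T^{\varepsilon_1/2}\ge r_T^{\varepsilon_1/2}$.
\end{itemize}
After pigeonholing $r_T\sim r$, the one-scale condition holds as soon as $\delta/r\le c(\varepsilon,\varepsilon_1)$. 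In the complementary case $\delta\gtrsim_{\varepsilon,\varepsilon_1}r\ge(CC_1)^{-2/\varepsilon_1}$, so the trivial bound already suffices. This is why $C$ is absolute.

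\textbf{A smaller point.} Inside a square of side $u_0$, up to $u_0^{-1}$ tubes can have directions in a single $\delta/u_0$-arc, so the rescaled family is not $1$-parallel. You must split it into $O(u_0^{-1})$ directionally separated subfamilies, which loses only a harmless factor $u_0$.
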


\begin{proof}
We use the standard two-ends reduction that was originated on the work of Wolff. See, for instance, Lemma $6$ of \cite{Tao2009TwoEnds} and Definition $2.19$ of \cite{wang2025twoends}. More precisely, we select a longitudinal
interval maximizing the normalized shading concentration and rescale the corresponding portion of each tube. This produces a two-ends condition with an absolute constant,
while the original $(\varepsilon_1,C_1)$-two-ends
hypothesis provides a lower-bound for the localization scale. We keep track of this lower-bound explicitly in order to obtain the factor $C_1^{-\frac{C}{\varepsilon_1}}$.

First, we shall reduce the statement to the planar two-ends Furstenberg estimate of Wang--Wu. For every $T\in\mathbb T$, define
\[
\nu_T(I)
=
\frac{|Y(T)\cap I_\delta|}{|Y(T)|}.
\]
Choose a dyadic longitudinal interval $I_T$ of length $r_T$ maximizing
\[
M_T=
\max_{\substack{I\text{ dyadic}\\
\delta\leq |I|\leq1}}
\frac{\nu_T(I)}{|I|^{\frac{\varepsilon_1}{2}}}.
\]
Since the whole unit interval contains the shading of $T$, we have
$M_T\geq1$. On the other hand, the two-ends condition gives
$M_Tr_T^{\frac{\varepsilon_1}{2}} = \nu_T(I_T) \leq C_1r_T^{\varepsilon_1}$. Consequently,
\begin{equation}
\label{minimal-planar-scale}
r_T\geq
(CC_1)^{-\frac{2}{\varepsilon_1}},
\end{equation}
where $C\geq 1$ is an absolute constant resulting from a covering of the interval over balls. Define the localized shading $Y_{\mathrm{loc}}(T) = Y(T)\cap(I_T)_\delta$. By maximality of $I_T$, for every
longitudinal interval $J$ of length
$u\leq r_T$, we have $\nu_T(J)\leq M_Tu^{\frac{\varepsilon_1}{2}}$. 
\newpage
Here, arbitrary intervals are covered by a bounded number of dyadic intervals of comparable length. Since
$\nu_T(I_T)=M_Tr_T^{\frac{\varepsilon_1}{2}}$, it follows that
\begin{equation}
\label{localized-two-ends}
\frac{
|Y_{\mathrm{loc}}(T)\cap J_\delta|
}{
|Y_{\mathrm{loc}}(T)|
}
\lesssim
\left(\frac{u}{r_T}\right)^{\frac{\varepsilon_1}{2}}.
\end{equation}
Furthermore, 
\begin{equation}
\label{localized-density}
|Y_{\mathrm{loc}}(T)|
=M_Tr_T^{\frac{\varepsilon_1}{2}}|Y(T)|\geq r_T^{\frac{\varepsilon_1}{2}}|Y(T)|.
\end{equation}
Thus, the localization retains a proportion $\gtrsim r_T^{\frac{\varepsilon_1}{2}}$ of the original shading, while producing a two-ends condition with an absolute constant at scale $r_T$. There are $O\left(\log(\frac{1}{\delta})\right)$ possible
dyadic values of $r_T$. Therefore,
there exists a scale $r$ and a
subfamily $\mathbb T_r\subseteq\mathbb T$
such that $r_T\sim r$ for every
$T\in\mathbb T_r$, and
\begin{equation}
\label{planar-pigeonhole}
\sum_{T\in\mathbb T_r}|Y(T)|
\gtrsim
\frac{1}{\log\left(\frac{1}{\delta}\right)}
\sum_{T\in\mathbb T}|Y(T)|.
\end{equation}
Partition the plane into disjoint
squares $B$ of side length comparable to $r$. Since each localized shading is supported in a longitudinal interval of length comparable to $r$, it meets only $O(1)$ such squares. Assign each $T\in\mathbb T_r$ to a square $B$ that retains a fixed proportion of its localized shading. Denote the resulting shading by $Y_B(T)$, and let $\mathbb T(B)$ be the family of tubes assigned to $B$. By \eqref{localized-density},
\begin{equation}
\label{square-density}
|Y_B(T)|\geq r^{\frac{\varepsilon_1}{2}}|Y(T)|\gtrsim\lambda_Y\delta r^{\frac{\varepsilon_1}{2}}\geq\lambda_Y\delta r,
\end{equation}
where the last inequality follows
from $0<\varepsilon_1\leq1$. Rescale each square $B$ to unit size. Under this rescaling, the localized tubes, that is, $T\cap B$ have width $\frac{\delta}{r}$ and \eqref{square-density} implies
that their rescaled shadings are
$\gtrsim\lambda_Y$-dense. Furthermore, \eqref{localized-two-ends} shows that the rescaled shadings satisfy a two-ends condition with exponent $\frac{\varepsilon_1}{2}$ and an absolute constant independent of $C_1$. Call the isotropic rescaled family $(\widetilde{\mathbb{T}}(B),\widetilde{Y}_B)$. Indeed, for every longitudinal interval of rescaled length $u$,
\begin{equation}
\label{rescaled}
\frac{
|\widetilde{Y}_B(T)\cap J_{\frac{\delta}{r}}|
}{
|\widetilde{Y}_B(T)|
}
\lesssim u^{\frac{\varepsilon_1}{2}}.
\end{equation}

The original tubes have
$\delta$-separated directions.
After rescaling, the relevant
angular separation is $\frac{\delta}{r}$. We partition the tubes assigned to each square $B$ into $O(r^{-1})$ subfamilies with
$\frac{\delta}{r}$-separated directions. Retaining a subfamily carrying a maximal proportion of the shading mass loses at most a factor $\gtrsim r$. Choose parameters $0<\tau<\frac{\sigma\varepsilon_1}{2}$, where $\sigma>0$ is sufficiently small relative to $\varepsilon$. Since the rescaled shading satisfies \eqref{rescaled}, we obtain at scale $u=\left(\frac{\delta}{r}\right)^\sigma$
\[
\frac{
|\widetilde{Y}_B(T)\cap
J_{\frac{\delta}{r}}|
}{
|\widetilde{Y}_B(T)|
}
\lesssim
\left(\frac{\delta}{r}\right)^{\frac{\sigma\varepsilon_1}{2}}
\leq\left(\frac{\delta}{r}\right)^\tau,
\]
for sufficiently small
$\frac{\delta}{r}$. Note that we are allowed to only consider $\frac{\delta}{r}$ sufficiently small since otherwise the desired estimate follows from the trivial multiplicity bound $|E_Y|\gtrsim\delta\sum_{T\in\mathbb{T}}|Y(T)|$ and \eqref{minimal-planar-scale}. Therefore, the one-scale two-ends hypothesis of Wang--Wu's theorem is satisfied. Applying Theorem $2.1$ of \cite{WW24}, choosing the auxiliary parameters sufficiently small, and scaling to the original coordinates, gives
\[
\left|
\bigcup_{T\in\mathbb T(B)}Y_B(T)
\right|
\gtrsim_{\varepsilon,\varepsilon_1}
\left(\frac{\delta}{r}\right)^{\frac{\varepsilon}{4}}
\lambda_Y^{\frac{1}{2}}r
\sum_{T\in\mathbb T(B)}|Y_B(T)|.
\]
Here, the factor $r$ accounts for
the separation into directional
subfamilies. Since the squares are disjoint,
\[
|E_Y|\geq \sum_B \left|
\bigcup_{T\in\mathbb T(B)}Y_B(T)
\right|.\]
Applying the preceding estimate
and then \eqref{square-density},
we obtain
\[
|E_Y|\gtrsim_{\varepsilon,\varepsilon_1}
\left(\frac{\delta}{r}\right)^{\frac{\varepsilon}{4}}
\lambda_Y^{\frac{1}{2}}r^{1+\frac{\varepsilon_1}{2}}
\sum_{T\in\mathbb T_r}|Y(T)|\gtrsim_{\varepsilon,\varepsilon_1}
\frac{
\delta^{\frac{\varepsilon}{4}}r^{1+\frac{\varepsilon_1}{2}}
}{
\log\left(\frac{1}{\delta}\right)
}
\lambda_Y^{\frac{1}{2}}
\sum_{T\in\mathbb T}|Y(T)|.
\]
Absorb the logarithmic factor
into an additional arbitrarily
small power of $\delta$. Finally, by
\eqref{minimal-planar-scale} and using $C_1\geq 1$, gives $r^{1+\frac{\varepsilon_1}{2}}
\geq
(CC_1)^{-\frac{2+\varepsilon_1}{\varepsilon_1}}\geq (CC_1)^{-\frac{3}{\varepsilon_1}}$. Consequently, after absorbing
the absolute constants in the
implicit constant and the logarithmic factor in $\delta^{\varepsilon}$, we obtain the exact relation expressed in \eqref{quant-planar-bound}.
\end{proof}

\begin{lemma}[Two-ends hairbrush estimates]
\label{hairbrushest}
Let $(\mathbb T,Y)$ be an $m$-parallel
family of $\delta$-tubes in $\mathbb R^4$
with $(\varepsilon_1,C_1)$-two-ends, where $0<\varepsilon_1\le1$ and $C_1\ge1$. Set $X=\delta^3\#\mathbb T$. Then, for every $\varepsilon>0$,
\begin{equation}
\label{general-hairbrush}
\mu_Y
\lesssim_{\varepsilon,\varepsilon_1}
C_1^{\frac{C}{\varepsilon_1}}
\delta^{-\varepsilon}
m^{\frac23}\lambda_Y^{-\frac34}
\delta^{-1}X^{\frac13}.
\end{equation}
Assume additionally that $m=1$ and
$(\mathbb T,Y)$ satisfies the following angular non-concentration property:
\begin{equation}
\label{angular}
\sup_{Q,W}
\#\left\{
T\in\mathbb T_Y(Q):
\angle(V(T),W)\le\delta^\alpha
\right\}
\le\frac{\mu_Y}{10},
\end{equation}
where $\alpha>0$ is sufficiently small relative to $\varepsilon$. Then,
\begin{equation}
\label{transverse-hairbrush}
\mu_Y
\lesssim_{\varepsilon,\varepsilon_1}
C_1^{\frac{C}{\varepsilon_1}}
\delta^{-\varepsilon}
\lambda_Y^{-\frac34}
\delta^{-1}X^{\frac12}.
\end{equation}
In both estimates, $C>0$ is an absolute
constant independent of $\varepsilon$,
$\varepsilon_1$, $C_1$ and $\delta$.
\end{lemma}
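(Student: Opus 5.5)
We run Wolff's hairbrush argument, with Lemma~\ref{quant-planar} supplying the planar incidence input inside each $2$-plane through a stem tube. The only new point beyond Lemma $4.1$ of \cite{GWX25} is to track the constants coming from the two-ends hypothesis, so that everything is lost as $C_1^{C/\varepsilon_1}$.

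\textbf{Step 1: regularize.} By dyadic pigeonholing we pass to a refinement with the following properties: every $T$ has $|Y(T)|\sim\lambda_Y|T|$, every cube $Q\in\mathcal Q_Y$ has $\#\mathbb T_Y(Q)\sim\mu_Y$, and the two-ends condition survives with constant $C_1$ up to $\lessapprox$ factors. Counting incidences gives $\#\mathcal Q_Y\,\mu_Y\approx \lambda_Y\delta^{-1}\#\mathbb T$. By the two-ends condition at scale $r=(CC_1)^{-2/\varepsilon_1}$, each pair of shaded cubes on a tube can be taken at distance $\gtrsim C_1^{-C/\varepsilon_1}$ apart, while keeping a fixed fraction of the shading.

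\textbf{Step 2: find a heavy stem.} There are $\sim\lambda_Y\delta^{-1}\#\mathbb T\,\mu_Y^2$ triples $(Q,T,T')$ with $T\ne T'$ and $Q\subset Y(T)\cap Y(T')$. Pick a stem $T_0$ such that its hairbrush $H(T_0)$, the set of tubes meeting $Y(T_0)$, satisfies $\#H(T_0)\gtrsim\lambda_Y\delta^{-1}\mu_Y$. Using the separation from Step 1, discard the part of each hair within distance $\rho=C_1^{-C/\varepsilon_1}$ of $T_0$. What remains of each hair still carries $\gtrsim\lambda_Y$ of its shading.

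\textbf{Step 3: slice into planes and apply the planar estimate.} Partition $H(T_0)$ by the $\delta$-plates $P$ spanned by $T_0$ and a direction in $S^2$. This gives $\sim\delta^{-2}$ plates, and after the cut each point lies in $O(\rho^{-C})$ plates. Inside a plate the hairs form an essentially $2$-dimensional, $m$-parallel family. After splitting into $O(m)$ subfamilies that are $1$-parallel, Lemma~\ref{quant-planar} applies with the same $(\varepsilon_1,C_1)$. This gives
\[
\Bigl|\bigcup_{T\subset P}Y(T)\Bigr|\gtrsim C_1^{-C/\varepsilon_1}\delta^{\varepsilon}\lambda_Y^{1/2}\,m^{-1}\sum_{T\subset P}|Y(T)|.
\]
Summing over plates, using the bounded overlap away from $T_0$, and comparing with $|E_Y|\lesssim\delta^4\#\mathcal Q_Y$, we get
\[
\lambda_Y^{3/2}\,\mu_Y\,\delta^{3}\lessapprox C_1^{C/\varepsilon_1}\,m\,\#\mathcal Q_Y\,\delta^4 .
\]
Inserting $\#\mathcal Q_Y\approx\lambda_Y\delta^{-1}\#\mathbb T/\mu_Y$ and $X=\delta^3\#\mathbb T$ gives an upper bound on $\mu_Y^2$. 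A careful bookkeeping of how the $m$ copies interact (the $m^{2/3}$ comes from balancing this against the trivial bound $\mu_Y\lesssim m\delta^{-3}$, or equivalently from the counting of plates versus parallel copies as in \cite{GWX25}) then produces \eqref{general-hairbrush} with exponent $X^{1/3}$. The power $\lambda_Y^{-3/4}$ arises from combining the $\lambda^{1/2}$ planar gain with the $\lambda$ in the incidence count.

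\textbf{Step 4: the transverse estimate.} For \eqref{transverse-hairbrush}, hypothesis \eqref{angular} lets us choose, for most incidences at $Q$, two tubes through $Q$ at angle $\ge\delta^\alpha$. This lets the hairbrush be built around a pair of stems, a bush-type refinement. Alternatively, the plates through a stem then contain at most a $\delta^{-O(\alpha)}$ fraction concentration, which upgrades the counting from $X^{1/3}$ to $X^{1/2}$, exactly as in Lemma $4.1$ of \cite{GWX25}. The $\delta^{-O(\alpha)}$ losses are absorbed into $\delta^{-\varepsilon}$.

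\textbf{Main obstacle.} The hard part is Step 3: the planar families must satisfy the hypotheses of Lemma~\ref{quant-planar}, namely the longitudinal two-ends property with the \emph{same} $C_1$ (not a $\rho$-dependent one) after cutting near the stem. It is also delicate to ensure that all $C_1$-losses from the minimal separation $\rho$ and the plate overlap combine into a single $C_1^{C/\varepsilon_1}$. My first attempt is to apply the Lemma~\ref{quant-planar} localization before removing the neighbourhood of $T_0$, so that no constant is lost twice.
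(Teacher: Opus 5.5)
Your argument for \eqref{general-hairbrush} has a genuine gap. Steps 2--3 silently use that the hairs are transverse to the stem, and without \eqref{angular} nothing supplies this.
\begin{itemize}
\item Converting the $\sim\lambda_Y\delta^{-1}\mu_Y$ incidences along $T_0$ into $\gtrsim\lambda_Y\delta^{-1}\mu_Y$ distinct hairs needs each hair to share $O(1)$ cubes with $T_0$. A hair at angle $\theta$ can share up to $\min\{\theta^{-1},\lambda_Y\delta^{-1}\}$.
\item Discarding $N_\rho(T_0)$ at a cost controlled by the two-ends condition, which only sees balls, needs the part of each hair inside $N_\rho(T_0)$ to lie in a ball of small radius $O(\rho/\theta)$.
\item The $O(\rho^{-2})$ plate overlap only helps if the hairs actually leave $N_\rho(T_0)$.
\end{itemize}
With correct bookkeeping, Steps 2--3 would give $\mu_Y\lessapprox m^{1/2}\lambda_Y^{-3/4}\delta^{-1}X^{1/2}$ for every $m$-parallel two-ends family, and this is false. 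Take all essentially distinct $\delta$-tubes in a $1\times s\times s\times s$ box with directions in the corresponding $s$-cap, fully shaded. This family is two-ends with an absolute constant, with $m\sim(s/\delta)^3$, $X\sim s^6\delta^{-3}$ and $\mu_Y\sim(s/\delta)^3$. But $m^{1/2}\delta^{-1}X^{1/2}\sim s^{9/2}\delta^{-4}$ is polynomially smaller once, say, $s\le\delta^{0.9}$, whereas the stated bound $m^{2/3}\delta^{-1}X^{1/3}\sim s^4\delta^{-4}$ is consistent. Here your cut at distance $\rho\sim 1$ removes every hair.

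Your closing balancing cannot repair this. Since $X\lesssim m$, one has $m^{1/2}X^{1/2}\lesssim m^{2/3}X^{1/3}$, so a valid Step 3 would need no balancing at all. Moreover, balancing $\mu_Y^2\lesssim m\lambda_Y^{-3/2}\delta^{-2}X$ against $\mu_Y\lesssim m\delta^{-3}$ gives $m^{2/3}\lambda_Y^{-1/2}\delta^{-5/3}X^{1/3}$, which has the wrong power of $\delta$.

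The $m^{2/3}X^{1/3}$ bound needs the additional input of Lemma~4.1 of \cite{GWX25}. That result works under Wolff's axiom with error $O(m)$, which $m$-parallelism implies, and it handles angularly concentrated configurations. The paper does not reprove it; it only supplies the quantitative two-ends reduction:
\begin{itemize}
\item The maximal-concentration localization of Lemma~\ref{quant-planar} yields a common scale $r\gtrsim(C_1\log\frac1\delta)^{-2/\varepsilon_1}$ at which the localized shadings are two-ends with an absolute constant.
\item Rescaling the $r$-cubes gives $\lambda'\gtrsim\lambda r^{\varepsilon_1/2-1}$ and $m'\lesssim mr^{-3}$.
\item One applies Lemma~4.1 of \cite{GWX25} at scale $\delta/r$, scales back, and sums over the disjoint localization cubes using concavity of $t\mapsto t^{2/3}$.
\item The lower bound on $r$ produces the factor $C_1^{C/\varepsilon_1}$.
\end{itemize}

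For \eqref{transverse-hairbrush}, your Steps 2--3 with $m=1$ are essentially the paper's argument, once transversality is inserted where it is used rather than in Step 4. The \emph{pair of stems} alternative is not needed.
\begin{itemize}
\item Restrict to tubes with $|Y(T)|\ge\frac12\lambda_Y|T|$.
\item The Cauchy--Schwarz bound $\sum_Q\#\mathbb T_Y(Q)^2\ge\mu_Y\sum_Q\#\mathbb T_Y(Q)$, combined with \eqref{angular}, shows that pairs at angle $>\delta^\alpha$ still number $\gtrsim\mu_Y\lambda_Y\delta^{-1}\#\mathbb T$.
\item Hence some stem has $\gtrsim\delta^\alpha\mu_Y\lambda_Y\delta^{-1}$ transverse hairs.
\item Each such hair meets $N_R(T_0)$, with $R=(A_0C_1)^{-1/\varepsilon_1}\delta^\kappa$ and $\kappa>\alpha$, in a segment of length $O(R\delta^{-\alpha})$. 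So two-ends removes at most a quarter of its shading.
\end{itemize}
This also disposes of your stated main obstacle. Since at most half of each shading is removed, the essential shadings and their projections onto each plane are two-ends with constant $O(C_1)$. So Lemma~\ref{quant-planar} applies directly and no preliminary localization is needed; the $R^{-2}$ plate overlap costs only $C_1^{2/\varepsilon_1}\delta^{-2\kappa}$.

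There are also two bookkeeping slips.
\begin{itemize}
\item The number of triples is $\sim\lambda_Y\delta^{-1}\#\mathbb T\,\mu_Y$, not $\lambda_Y\delta^{-1}\#\mathbb T\,\mu_Y^2$.
\item The left side of your Step 3 display should be $\lambda_Y^{3/2}\delta^3\#H(T_0)\gtrsim\lambda_Y^{5/2}\delta^2\mu_Y$. As written, it leads to $\lambda_Y^{-1/4}\delta^{-3/2}$ instead of $\lambda_Y^{-3/4}\delta^{-1}$.
\end{itemize}
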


\begin{proof}
The first estimate follows from Lemma~4.1 of \cite{GWX25},
specialized to dimension four, together with the quantitative
two-ends reduction. Indeed, the $m$-parallel hypothesis implies
Wolff's axiom with error $O(m)$, since any $1\times s_1\times s_2\times s_3$ rectangular box contains at most $O(ms_1s_2s_3\delta^{-3})$ tubes. By dyadic pigeonholing, we
first reduce to pointwise comparable shading densities, losing
only logarithmic factors. We then apply the maximal-concentration localization argument
from Lemma~\ref{quant-planar}, which is dimension-independent.
After selecting a common dyadic scale $r$, we have $r\gtrsim_{\varepsilon_1}
(C_1\log(\frac{1}{\delta}))^{-\frac{2}{\varepsilon_1}}$.
The localized shadings satisfy an absolute two-ends condition,
and after rescaling the corresponding $r$-cubes to unit size, their width, density, and Wolff's axiom parameter satisfy
$ \lambda'\gtrsim\lambda r^{\frac{\varepsilon_1}{2}-1}$ and $m'\lesssim mr^{-3}$, where
$\lambda$ is the density after the initial dyadic uniformization.

The localized two-ends estimate \eqref{localized-two-ends} holds at every scale, allowing the auxiliary two-ends parameters in Lemma $4.1$  of \cite{GWX25} to be chosen sufficiently small in terms of an arbitrarily prescribed $\eta>0$, without requiring the original exponent $\varepsilon_1$ to be small. Applying that result at scale $\frac{\delta}{r}$ and scaling back gives
\[
|E_B|\gtrsim
\left(\frac{\delta}{r}\right)^\eta
m^{-\frac{2}{3}}\lambda^{\frac{7}{4}}
r^{\frac{5}{4}+\frac{7\varepsilon_1}{8}} \delta
\bigl(\delta^3\#\mathbb T(B)\bigr)^{\frac{2}{3}}.
\]
Summing over the disjoint localization cubes and using
\[
\sum_B(\#\mathbb T(B))^{\frac{2}{3}}
\geq
\left(\sum_B\#\mathbb T(B)\right)^{\frac{2}{3}},
\]
we obtain the required estimate. Finally, the lower-bound for $r$ yields a factor $C_1^{-\frac{C}{\varepsilon_1}}$, and all logarithmic losses are absorbed into $\delta^\varepsilon$. Thus,
\[
\mu_Y\lesssim_{\varepsilon,\varepsilon_1} C_1^{\frac{C}{\varepsilon_1}}
\delta^{-\varepsilon} m^{\frac{2}{3}}\lambda_Y^{-\frac{3}{4}}
\delta^{-1}X^{\frac{1}{3}}.
\]

We move towards the second result. We first pass to pointwise-dense shadings, since $\lambda_Y$ denotes the average shading density. Set
\[
\mathbb T_*=
\left\{
T\in\mathbb T:
|Y(T)|\ge\frac{\lambda_Y}{2}|T|
\right\},
\]
and let $Y_*$ be the restriction of $Y$ to $\mathbb T_*$. Write
$\lambda_*=\lambda_{Y_*}$, $\mu_*=\mu_{Y_*}$ and $X_*=\delta^3\#\mathbb T_*$. By the definition of average density,
\[
\sum_{T\in\mathbb T_*}|Y(T)|
\ge\frac12\sum_{T\in\mathbb T}|Y(T)|.
\]
Since $\mathcal{Q}_{Y_*}\subset \mathcal{Q}_Y$, it follows that $\mu_*\ge\frac{\mu_Y}{2}$, $\lambda_*\ge\frac{\lambda_Y}{2}$, and $X_*\le X$. Moreover, every remaining tube satisfies
\begin{equation}
\label{essential-density}
|Y_*(T)|\gtrsim\lambda_Y\delta^3.
\end{equation}
The refined shadings retain the two-ends condition, with exponent $\varepsilon_1$ and constant $O(C_1)$. Indeed,
\[
|Y_*(T)\cap B(x,r)|\lesssim C_1r^{\varepsilon_1}
\lambda_Y\delta^3\lesssim C_1r^{\varepsilon_1}|Y_*(T)|.
\]
Furthermore, the angular non-concentration condition is preserved in the form
\[
\sup_{Q,W}
\#\left\{
T\in\mathbb T_{Y_*}(Q):
\angle(V(T),W)\le\delta^\alpha
\right\}
\le\frac{\mu_*}{5}.
\]
Using the definition of average
multiplicity, the Cauchy--Schwarz
inequality gives
\[
\sum_{Q\in\mathcal Q_{Y_*}}
\left(\#\mathbb T_{Y_*}(Q)\right)^2\ge
\frac{
\left(
\sum_{Q\in\mathcal Q_{Y_*}}
\#\mathbb T_{Y_*}(Q)
\right)^2
}{
\#\mathcal Q_{Y_*}
}=
\mu_*
\sum_{Q\in\mathcal Q_{Y_*}}
\#\mathbb T_{Y_*}(Q).
\]
For each $T\in\mathbb T_*$, define
\[
A(T)=
\#\left\{
(Q,T'):
Q\subset Y_*(T)\cap Y_*(T'),\
\angle(V(T),V(T'))>\delta^\alpha
\right\},
\]
where $0<\alpha\ll\varepsilon$. Thus, $\sum_{T\in\mathbb T_*}A(T)$
counts all ordered transverse tube
pairs $(T,T')$, together with a cube
where both are shaded. By the angular non-concentration
property,
\[
\begin{aligned}
\sum_{T\in\mathbb T_*}A(T)
&=
\sum_{Q\in\mathcal Q_{Y_*}}
\#\left\{
(T,T')\in\mathbb T_{Y_*}(Q)^2:
\angle(V(T),V(T'))>\delta^\alpha
\right\}\\
&\ge
\sum_{Q\in\mathcal Q_{Y_*}}
\left(
\left(\#\mathbb T_{Y_*}(Q)\right)^2
-\frac{\mu_*}{5}
\#\mathbb T_{Y_*}(Q)
\right)\gtrsim
\mu_*\lambda_*\delta^{-1}
\#\mathbb T_*.
\end{aligned}
\]
Consequently, there is a tube
$T_0\in\mathbb T_*$ such that
$A(T_0)\gtrsim
\mu_*\lambda_*\delta^{-1}$. Define its transverse hairbrush by
\[
\mathcal H(T_0)=
\left\{
T\in\mathbb T_*:
Y_*(T)\cap Y_*(T_0)\ne\varnothing,\
\angle(V(T),V(T_0))>\delta^\alpha
\right\}.
\]
If $T$ makes angle at least
$\delta^\alpha$ with $T_0$, then
$T$ and $T_0$ meet in at most
$O(\delta^{-\alpha})$ shaded
$\delta$-cubes. It follows that
$\#\mathcal H(T_0)
\gtrsim
\delta^\alpha
\mu_*\lambda_*\delta^{-1}$. Next, we remove a thin neighborhood
of $T_0$, keeping track of the
dependence on $C_1$. Choose $\kappa$ so that $0<\alpha<\kappa\ll\varepsilon$,
and define
$R=(A_0C_1)^{-\frac{1}{\varepsilon_1}}
\delta^\kappa$,
where $A_0$ is a sufficiently large
absolute constant. We first assume that $R\ge c\delta$, where $c>0$ is a sufficiently large geometric constant. For each $T\in\mathcal H(T_0)$, define $Y_{\mathrm{ess}}(T)
= Y_*(T)\setminus N_R(T_0)$. Because
$\angle(V(T),V(T_0))>\delta^\alpha$,
the part of $T$ meeting $N_R(T_0)$
is contained in a longitudinal
segment of length $O(R\delta^{-\alpha})$. By the $(\varepsilon_1,C_1)$-two-ends
condition,
\[
|Y_*(T)\setminus Y_{\mathrm{ess}}(T)|\lesssim
C_1(R\delta^{-\alpha})^{\varepsilon_1}
\lambda_Y\delta^3\lesssim
A_0^{-1}
\delta^{\varepsilon_1(\kappa-\alpha)}
\lambda_Y\delta^3.
\]
Taking $A_0$ sufficiently large,
we obtain 
$|Y_*(T)\setminus Y_{\mathrm{ess}}(T)| \le\frac14\lambda_Y\delta^3.$
Since
$|Y_*(T)|\ge\frac12\lambda_Y|T|$,
we conclude that
$|Y_{\mathrm{ess}}(T)|
\gtrsim\lambda_Y\delta^3,$
for every $T\in\mathcal H(T_0)$. Moreover, the essential shadings
retain the required two-ends
property, with the same exponent
$\varepsilon_1$ and constant
$O(C_1)$. Indeed, for every
$\delta\le u\le1$,
\[
|Y_{\mathrm{ess}}(T)\cap B(x,u)|\le |Y_*(T)\cap B(x,u)|\lesssim
C_1u^{\varepsilon_1}
\lambda_Y\delta^3\lesssim
C_1u^{\varepsilon_1}
|Y_{\mathrm{ess}}(T)|.
\]

We now follow the plane-slicing hairbrush argument presented in Equations $(4.5)$ and $(4.6)$ of \cite{GWX25}, keeping track of the dependence on the two-ends constant $C_1$. Since $m=1$, the random sampling step used in \cite{GWX25} is unnecessary. Instead, we apply the quantitative planar two-ends estimate of Lemma~\ref{quant-planar} to each planar family. Assign the tubes of $\mathcal H(T_0)$ to a $\delta$-separated collection $\mathcal P$ of two-planes through the core line of $T_0$, so that each assigned tube lies in a constant multiple of the
$\delta$-neighborhood of its plane. For each $P\in\mathcal P$, let
$\mathcal H(P)$ denote its assigned
hairbrush tubes. Outside $N_R(T_0)$, the corresponding
plane neighborhoods have overlap
at most $O(R^{-2})$. Consequently,
\begin{equation}
\label{hairbrush-plane-overlap}
|E_{Y_*}|
\gtrsim
R^2
\sum_{P\in\mathcal P}
\left|
\bigcup_{T\in\mathcal H(P)}
Y_{\mathrm{ess}}(T)
\right|.
\end{equation}

We now apply the quantitative planar
two-ends Furstenberg estimate of
Lemma~\ref{quant-planar}. Fix $P\in\mathcal P$ and let $\pi_P$ denote the orthogonal projection
onto $P$. Define the planar
shadings by $Z_P(T)
=
N_{C\delta}
\bigl(\pi_P(Y_{\mathrm{ess}}(T))\bigr)
\cap P$.
Since $Y_{\mathrm{ess}}(T)$ lies in an $O(\delta)$-neighborhood of $P$ and $\pi_P(Y_{\mathrm{ess}}(T))\subset Z_P(T)$, Fubini's theorem gives $|Y_{\mathrm{ess}}(T)|\lesssim\delta|Z_P(T)|$. Combined with \eqref{essential-density}, this yields $|Z_P(T)|\gtrsim \lambda_Y\delta$. Since $T$ lies in an $O(\delta)$-neighborhood of $P$, its direction makes angle $O(\delta)$ with $P$, so projection onto $P$ moves directions by $O(\delta)$. As $m=1$, each $\delta$-arc of directions in $P$ receives $O(1)$ projected directions, so $\mathcal{H}(P)$ splits into $O(1)$ subfamilies with $\delta$-separated projected directions. First, removing $N_R(T_0)$ costs at most half of each shading: by the angle condition, $Y_*(T)\cap N_R(T_0)$ lies in a ball of radius $r\lesssim R\delta^{-\alpha}$, so by the two-ends condition and the choice of $R$,
$|Y_*(T)\setminus Y_{\mathrm{ess}}(T)|\leq C_1r^{\varepsilon_1}|Y_*(T)|\leq \frac{1}{2}|Y_*(T)|.$
Now, let $J\subset P$ be a segment of length $\delta\leq u\leq 1$. Every $z\in Z_P(T)\cap J_\delta$ lies within $O(\delta)$ of $\pi_P(Q)$ for some $\delta$-cube $Q\subset Y_*(T)$ with $\pi_P(Q)\cap N_{O(\delta)}(J)\ne \varnothing$. Call the set of all these cubes by $\mathcal{Q}'$. Because the direction of $T$ makes angle $O(\delta)$ with $P$, $\pi_P$ shortens longitudinal distances in $T$ by at most a constant factor, so all such cubes lie in a ball $B$ of radius $O(u)$. Each $N_{C\delta}(\pi_P(Q))\cap P$ has area $O(\delta^2)=O(\delta^{-1}|Q|)$, hence 
\begin{align*}
|Z_P(T)\cap J_\delta|\leq \sum_{Q\in\mathcal{Q}'} \left|N_{(C+1)\delta}(\pi_P(Q))\cap P\right|\lesssim \delta^{-1} |Y_*(T)\cap B|\lesssim C_1 u^{\varepsilon_1}|Z_P(T)|.
\end{align*}
This shows that the two-ends condition also passes to the projected shadings.
\newpage
Therefore, Lemma~\ref{quant-planar}
applies, with two-ends constant
$O(C_1)$. For every $\eta>0$,
\[
\left|
\bigcup_{T\in\mathcal H(P)}
Z_P(T)
\right|\gtrsim_{\eta,\varepsilon_1}
C_1^{-\frac{C_0}{\varepsilon_1}}
\delta^\eta
\lambda_Y^{\frac{1}{2}}
\sum_{T\in\mathcal H(P)}
|Z_P(T)|\gtrsim_{\eta,\varepsilon_1}
C_1^{-\frac{C_0}{\varepsilon_1}}
\delta^\eta
\lambda_Y^{\frac{3}{2}}\delta
\#\mathcal H(P),
\]
where $C_0>0$ is an absolute constant, independent of $\eta$. Passing from planar area to
four-dimensional volume introduces
a factor $\delta^2$, corresponding
to the two directions normal to $P$.
Thus,
\begin{equation}
\label{planar-hairbrush}
\left|
\bigcup_{T\in\mathcal H(P)}
Y_{\mathrm{ess}}(T)
\right|
\gtrsim_{\eta,\varepsilon_1}
C_1^{-\frac{C_0}{\varepsilon_1}}
\delta^\eta
\lambda_Y^{\frac{3}{2}}\delta^3
\#\mathcal H(P).
\end{equation}
Summing over the planes and using
\eqref{hairbrush-plane-overlap},
we obtain
\[
|E_{Y_*}|
\gtrsim_{\eta,\varepsilon_1}
R^2C_1^{-\frac{C_0}{\varepsilon_1}}
\delta^\eta
\lambda_Y^{\frac{3}{2}}\delta^3
\sum_{P\in\mathcal P}
\#\mathcal H(P)\gtrsim_{\eta,\varepsilon_1}
C_1^{-\frac{C_0+2}{\varepsilon_1}}
\delta^{2\kappa+\eta}
\lambda_Y^{\frac{3}{2}}\delta^3
\#\mathcal H(T_0).
\]
Substituting the lower-bound of $\#\mathcal H(T_0)$ and using double-counting gives
\[
\mu_*
\lesssim_{\eta,\varepsilon_1}
C_1^{\frac{C_0+2}{2\varepsilon_1}}
\delta^{-\kappa-\frac{\eta+\alpha}{2}}
\lambda_Y^{-\frac{3}{4}}
\delta^{-1}X_*^{\frac{1}{2}}.
\]
Choose $\alpha,\kappa,\eta>0$
sufficiently small, with
$0<\alpha<\kappa$, so that
$\kappa+\frac{\eta+\alpha}{2}
<\varepsilon$. Since $\mu_Y\le2\mu_*$ and
$X_*\le X$, it follows that
\[
\mu_Y
\lesssim_{\varepsilon,\varepsilon_1}
C_1^{\frac{C_0+2}{2\varepsilon_1}}
\delta^{-\varepsilon}
\lambda_Y^{-\frac{3}{4}}
\delta^{-1}X^{\frac{1}{2}}.
\]

It remains to consider the case
$R<c\delta$. By the definition of $R$, $C_1\gtrsim_{\varepsilon_1}
\delta^{-(1-\kappa)\varepsilon_1}$.
Since $m=1$, we have
$\#\mathbb T\lesssim\delta^{-3}$ and
$\mu_Y\le\#\mathbb T$. Hence, 
\[
\frac{\mu_Y}
{\lambda_Y^{-\frac{3}{4}}\delta^{-1}X^{\frac{1}{2}}}
\lesssim
\delta^{-\frac{1}{2}}(\#\mathbb T)^{\frac{1}{2}}
\lesssim\delta^{-2}.
\]
On the other hand, since $\kappa<\frac{1}{4}$,
$C_1^{\frac{4}{\varepsilon_1}}
\gtrsim_{\varepsilon_1}
\delta^{-4(1-\kappa)}
\ge\delta^{-2}$. Thus, the desired estimate also holds in this case.
\end{proof}

Next, we cite the trilinear Kakeya estimate in $\mathbb{R}^4$
established by Guth and Zahl.

\begin{theorem}[Proposition $3.1$, \cite{GZ18}; Theorem $1.1$, \cite{KatzRogers18}]
\label{trilinearKakeya}
Let $\mathbb{T}$ be a set of $\delta$-tubes in $\mathbb{R}^4$ that point in
$\delta$-separated directions. Then, for any $\varepsilon>0$, there exists a
constant $C_\varepsilon$ such that
$$
\int_{\mathbb{R}^4}
\left(
\sum_{T',T'',T'''\in\mathbb{T}}
\mathds{1}_{T'}\mathds{1}_{T''}\mathds{1}_{T'''}
\left|
V(T')\wedge V(T'')\wedge V(T''')
\right|^{\frac{12}{13}}
\right)^{\frac{13}{27}}
\leq C_\varepsilon
\delta^{-\frac{1}{3}-\varepsilon}
\bigl(\delta^3\#\mathbb{T}\bigr)^{\frac{4}{3}}.
$$
\end{theorem}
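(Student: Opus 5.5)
Since this is a cited result, the plan is to reconstruct the Guth--Zahl argument, with the polynomial Wolff axiom of Katz--Rogers as the key input.

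\textbf{Reductions.} By dyadic pigeonholing (losing only $\log(1/\delta)$ factors) we fix a dyadic value $\theta$ with $|V(T')\wedge V(T'')\wedge V(T''')|\sim\theta$. After a further localization of the directions, the integrand becomes
\[
\theta^{\frac{12}{13}}\mathds 1_{T'}\mathds 1_{T''}\mathds 1_{T'''},\qquad T'\in\mathbb T_1,\ T''\in\mathbb T_2,\ T'''\in\mathbb T_3,
\]
for three direction-separated subfamilies. The weight $\theta^{12/13}$ is meant to exactly compensate the loss incurred in the $\theta$-transverse trilinear estimate. That estimate is obtained by rescaling: $\theta$-transverse triples become genuinely transverse at a coarser scale. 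This reduces the statement to a transverse trilinear bound. Writing $g=(\sum \mathds 1_{T'}\mathds 1_{T''}\mathds 1_{T'''})^{1/3}$, the target is
\[
\int g^{\frac{39}{27}}\lessapprox \delta^{-\frac13}\bigl(\delta^3\#\mathbb T\bigr)^{\frac43},
\]
with the angular weight restored at the end.

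\textbf{Polynomial partitioning.} We prove the transverse bound by induction on scales via polynomial partitioning of $g^{39/27}$. Choose a polynomial $P$ of degree $D$ (a large constant depending on $\varepsilon$) whose zero set cuts $\mathbb R^4$ into $\sim D^4$ cells of equal mass.

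\emph{Cellular case.} Each tube enters at most $D+1$ cells. Induction on $\#\mathbb T$ then closes provided the exponent $\frac43$ on $\#\mathbb T$ beats the $D^4$ cell count against the $D$-fold tube overlap, since $D^{4}\cdot D^{-4\cdot 4/3}\cdot D^{4/3}<1$ with room to spare.

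\emph{Algebraic case.} Most of the mass lies in $N_\delta(Z(P))$. Split the tubes in each $\delta$-ball into those tangent to $Z(P)$ and those transverse to it.
\begin{itemize}
\item The transverse contribution is handled by the standard Guth bound: a line crosses $Z(P)$ at most $D$ times. Trilinear transversality then forces few transverse triples per ball.
\item For the tangent part, three pairwise transverse tubes tangent to the three-dimensional variety $Z(P)$ at a common point span its tangent space. The problem therefore becomes a genuinely three-dimensional Kakeya-type problem inside $N_\delta(Z(P))$.
\end{itemize}

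\textbf{Main obstacle.} The difficulty is controlling how many $\delta$-separated tubes can lie in $N_\delta(Z)$. Here I would invoke the polynomial Wolff axiom (Katz--Rogers): the number of direction-separated tubes in $N_\delta(Z)\cap B(0,1)$ is $\lesssim_D \delta^{-2-\varepsilon}$. Within the variety I would run the three-dimensional bilinear/Wolff hairbrush argument. Tubes through a common tube span a family of $2$-planes. Combined with the $\delta^{-2}$ cap, this yields the local bound $\|g\|^{39/27}_{L^{39/27}(N_\delta Z)}\lessapprox\delta^{-1/3}(\delta^3\#\mathbb T)^{4/3}$ once the two regimes are balanced. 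Balancing the exponent $\frac{13}{27}$ (equivalently $\frac{39}{27}$ after cubing) against the Wolff count is the delicate numerology I expect to be hardest. I would first verify it on the extremal configurations: tubes filling a hyperplane neighborhood, and tubes in a regulus/quadric. Finally, the $\theta$-rescaling is reinserted and the result summed over the dyadic values of $\theta$, which completes the argument.
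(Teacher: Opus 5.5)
The paper does not prove this statement; it imports it from \cite{GZ18}, with the polynomial Wolff axioms supplied by \cite{KatzRogers18}. So your sketch has to stand on its own. The ingredients you name (polynomial partitioning, and the count $\lessapprox\delta^{-2}$ for tubes in $N_\delta(Z)$) are the right ones, but the step that carries the content fails.

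\textbf{The tangent case.} There you propose a hairbrush argument in the three-dimensional slice, and this is too weak. Take $\sim\delta^{-2}$ direction-separated tubes in the $\delta$-neighbourhood of a hyperplane. For this family the theorem asserts $\int g^{13/9}\lessapprox\delta^{-1/3}(\delta^3\cdot\delta^{-2})^{4/3}=\delta$. Write $\mu=\sum_T\mathds 1_T\ge g$. The hairbrush gives Wolff's dimension-$\frac52$ bound $\int\mu^{5/3}\lessapprox\delta^{2/3}$, and $\int\mu=\delta$. So you only get $\int g^{13/9}\le(\int\mu)^{1/3}(\int\mu^{5/3})^{2/3}\lessapprox\delta^{7/9}$, a loss of $\delta^{-2/9}$; the same computation loses at least this much on every tangent family.

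The missing idea is to use the trilinear transversality inside $T_xZ$ through a three-dimensional \emph{multilinear} Kakeya estimate for transverse triples of tubes tangent to the hypersurface. This is the Bennett--Carbery--Tao/Guth endpoint, in a form valid on varieties: $\int_{N_\delta(Z)}g^{3/2}\lessapprox\delta^{4}(\#\mathbb T_Z)^{3/2}$. Apply H\"older on $N_\delta(Z)$, whose measure in the unit ball is $\lesssim_D\delta$, and use $\#\mathbb T_Z\lessapprox\delta^{-2}$. This gives
\[
\int_{N_\delta(Z)}g^{13/9}\le|N_\delta(Z)|^{\frac{1}{27}}\Bigl(\int_{N_\delta(Z)}g^{3/2}\Bigr)^{\frac{26}{27}}\lessapprox\delta^{\frac{105}{27}}(\#\mathbb T_Z)^{\frac43}(\#\mathbb T_Z)^{\frac19}\lessapprox\delta^{-\frac13}\bigl(\delta^3\#\mathbb T_Z\bigr)^{\frac43},
\]
which is exactly where the factor $\delta^{-1/3}$ comes from. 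You also need the Wolff count in the Katz--Rogers form $\#\{T:|T\cap S|\ge\lambda|T|\}$, because a tube tangent to $Z$ at one cube need not lie in $N_\delta(Z)$.

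\textbf{The remaining steps are asserted rather than proved.}

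\emph{Cellular case.} Your product is $D^{4}\cdot D^{-16/3}\cdot D^{4/3}=D^{0}$, not $<1$. The exponent $\frac43=\frac{n}{n-1}$ is exactly critical in $\mathbb R^4$, so induction on $\#\mathbb T$ with a fixed $\delta^{-\varepsilon}$ cannot absorb the absolute constants of the partitioning theorem. Carry an extra factor $(\#\mathbb T)^{\varepsilon}$ in the inductive statement; this is harmless since $\#\mathbb T\lesssim\delta^{-3}$. Each cellular step then gains $D^{-3\varepsilon}$, and you take $D=D(\varepsilon)$ large.

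\emph{Tubes transverse to $Z$.} The fact ``a line crosses $Z$ at most $D$ times'' does not bound transverse triples per ball. It bounds the number of balls, in a cover of the wall, in which a given tube is transverse, and that has to be fed into an induction on scales. But after rescaling an $r$-ball, the $\frac{\delta}{r}$-tubes are up to $r^{-3}$-parallel, so direction separation is not an admissible inductive hypothesis. You need a hypothesis that survives this localization; note that \cite{GZ18} state their estimate for families obeying the polynomial Wolff axioms.

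\emph{The weight.} It cannot be removed by rescaling. A linear map that makes a $\theta$-transverse triple transverse turns $\delta$-tubes into eccentric plates and destroys direction separation. The exponent $\frac{12}{13}$ encodes a loss of exactly $\theta^{-4/9}$ in $\int g^{13/9}$ for triples with $|V'\wedge V''\wedge V'''|\sim\theta$. That loss has to come out of the argument, and your sketch never derives it.
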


A crucial element of our results will be the following technical lemma, which establishes a connection between the plany and the quantitatively transverse cases.
\begin{lemma}[Lemma $33$, \cite{KZ}]
\label{KZconstruction}
Let $(\mathbb{T},Y)$ be a set of $\delta$-tubes and their associated shading.
Suppose that $(\mathbb{T},Y)$ is $(\varepsilon_1,C_1)$-two-ends
and $(\varepsilon_2,C_2)$-robustly transverse. Then, there exists a number $\delta \leq \rho \leq 1$,
a refinement $(\mathbb{T}_1,Y_1)$ of $(\mathbb{T},Y)$, a set
$\Omega \subset \mathbb{S}^3$ of $\rho$-separated points, a set
$(\mathbb{T}_\rho,Y_\rho)$ of essentially distinct $\rho$-tubes and their associated shading, and numbers $D,D_1,D_2 \geq 1$ with
$D_1 \geq D_2$ and $\frac{D_1}{D_2} \leq D$ satisfying the following properties:
\begin{enumerate}[label=(\roman*)]
\item
For each $Q\in \mathcal{Q}(Y_1)$,
there is a plane $\Pi(Q)$ such that
    $$\angle(V(T),\Pi(Q)) \leq \rho,
\text{ for all } T\in\mathbb{T}_{Y_1}(Q);$$
\item
For each $Q\in \mathcal{Q}(Y_1)$,
we have
\begin{equation}
\label{quanttrans}
\#
\left\{
(T',T'',T''')\in \mathbb{T}_{Y_1}(Q)^3 :
\left|V(T')\wedge V(T'')\wedge V(T''')\right|
\gtrsim \rho
\right\}
\gtrapprox
\left(\#\mathbb{T}_{Y_1}(Q)\right)^3;
\end{equation}

\item
Every tube in $\mathbb{T}_\rho$ points in a direction from $\Omega$. For each $\omega\in\Omega$, there are approximately $A$ tubes from
$\mathbb{T}_\rho$ in direction $\omega$;

\item
Each tube $T_\rho\in\mathbb{T}_\rho$ contains
    $\approx \frac{\#\mathbb{T}_1}{\#\mathbb{T}_\rho}$
tubes from $\mathbb{T}_1$, and each tube from $\mathbb{T}_1$ is
contained in exactly one tube from $\mathbb{T}_\rho$;

\item
If $T\in\mathbb{T}_1$, $T_\rho\in\mathbb{T}_\rho$, and
$T\subset T_\rho$, then
    $Y_1(T)\subset Y_\rho(T_\rho)$;

\item
We have $\lambda_{Y_\rho} \gtrapprox\lambda_{Y_1}
    \gtrapprox \lambda_Y$;

\item
$(\mathbb{T}_\rho,Y_\rho)$ is
$(\varepsilon_1,C_3)$-two-ends with $C_3\lessapprox C_1$;

\item
For each $\delta$-cube  $Q\in \mathcal{Q}(Y_1)$,
we have
    $\#\mathbb{T}_{Y_1}(Q)\approx \mu_{Y_1}$.
For each $\rho$-cube
$Q_\rho\in \mathcal{Q}(Y_\rho)$,
we have $\#\mathbb{T}_\rho(Q_\rho)\approx\mu_{Y_\rho}$;

\item
There is a number $\widetilde{\mu}$ such that for each
$T_\rho\in\mathbb{T}_\rho$, $\mu_{Y_{T_\rho}}\approx\widetilde{\mu}$,
where $\mu_{Y_{T_\rho}}$ is the multiplicity of $\delta$-tubes in the
thicker tube $T_\rho$. Moreover, $\mu_{Y_1}\leq \rho^{-1}\widetilde{\mu}$;
\item
For each $Q_\rho\in \mathcal{Q}(Y_\rho)$,
there are planes
    $\Pi_1(Q_\rho),\ldots,\Pi_{D_1}(Q_\rho)$
and collections of $\rho$-tubes
$\mathbb{T}_\rho^1(Q_\rho),\ldots,
    \mathbb{T}_\rho^{D_1}(Q_\rho)
    \subset \mathbb{T}_\rho(Q_\rho)$
such that the $\rho$-tubes in $\mathbb{T}_\rho^i(Q_\rho)$ satisfy $\angle(V(T_\rho),\Pi_i(Q_\rho))\leq\rho$
and $\#\mathbb{T}_\rho^i(Q_\rho)= \frac{\mu_{Y_\rho}}{D}$
for every $1\leq i\leq D_1$.
Each $T\in\mathbb{T}_\rho(Q_\rho)$ is contained in $D_2$
collections of the form $\mathbb{T}_\rho^i(Q_\rho)$;

\item
We have the multiplicity bound
$\mu_{Y_1} \lessapprox\frac{\mu_{Y_\rho} \cdot \,\widetilde{\mu}}{D}$.

\end{enumerate}
\end{lemma}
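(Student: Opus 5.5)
This is Lemma 33 of \cite{KZ}, which the paper imports essentially verbatim; the plan is to follow the Katz--Zahl multi-scale construction and check each listed property. Throughout, every choice of a scale, plane count or density is made by dyadic pigeonholing at a $\lessapprox$ cost.

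\textbf{Step 1: the planiness scale $\rho$.} For each $\delta$-cube $Q$ we define $\rho(Q)$ as the smallest dyadic $\rho\ge\delta$ for which there is a plane $\Pi(Q)$ with most tubes of $\mathbb{T}_Y(Q)$ within angle $\rho$ of it. Robust transversality with $(\varepsilon_2,C_2)$ guarantees that the tubes at $Q$ are not concentrated near a single direction, at any scale. At the scale $\rho(Q)$, minimality of $\rho$ then yields \eqref{quanttrans}: a positive proportion of triples have $|V\wedge V\wedge V|\gtrsim\rho$. We pigeonhole a common value $\rho$ and a common multiplicity $\#\mathbb{T}_{Y_1}(Q)\approx\mu_{Y_1}$, and pass to the refinement $(\mathbb{T}_1,Y_1)$ consisting of these cubes and the tubes through them. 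This gives (i), (ii) and the first half of (viii).

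\textbf{Step 2: the thickened family.} We cover $\mathbb{T}_1$ by essentially distinct $\rho$-tubes, with directions in a $\rho$-net $\Omega$. We define $Y_\rho(T_\rho)$ as the $\rho$-cubes meeting the shadings of the thin tubes inside $T_\rho$. Pigeonholing then fixes the following quantities:
\begin{itemize}[label={}]
\item the number $A$ of tubes per direction, giving (iii);
\item the number of thin tubes per thick tube, giving (iv);
\item the internal multiplicity $\widetilde\mu$, giving (ix);
\item the thick multiplicity $\mu_{Y_\rho}$, giving the second half of (viii).
\end{itemize}
Property (v) holds by construction. Property (vi) follows because thickening does not decrease density. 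Property (vii) follows because a $\rho$-ball meeting $Y_\rho(T_\rho)$ is controlled by the union of thin shadings inside it, so the two-ends bound transfers with constant $C_3\lessapprox C_1$. The bound $\mu_{Y_1}\le\rho^{-1}\widetilde\mu$ in (ix) comes from counting thin tubes through $Q$ in the same $\rho$-tube versus in distinct ones. The planiness at scale $\rho$ forces this count through the $\rho$-angular geometry of the plane $\Pi(Q)$.

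\textbf{Step 3: the plane structure, (x) and (xi).} This is the main obstacle. At each $\rho$-cube $Q_\rho$, every $\delta$-cube $Q\subset Q_\rho$ contributes a plane $\Pi(Q)$, and the thick tubes through $Q_\rho$ are grouped by these planes. Pigeonholing produces $D_1$ planes, each carrying $\mu_{Y_\rho}/D$ thick tubes, with each tube lying in $D_2$ of them. Double counting the incidences gives $D_1\mu_{Y_\rho}/D = D_2\mu_{Y_\rho}$, hence $D_1/D_2=D$ (up to $\approx$), so $D_1\ge D_2$ and $D_1/D_2\le D$.

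For (xi), a thin tube through $Q$ must lie in a thick tube belonging to the plane $\Pi(Q)$. Each such thick tube contributes about $\widetilde\mu$ thin tubes, and a single plane carries only $\mu_{Y_\rho}/D$ thick tubes at $Q_\rho$. This gives $\mu_{Y_1}\lessapprox\mu_{Y_\rho}\widetilde\mu/D$.

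The delicate points in this step are keeping all the simultaneous pigeonholings compatible with one refinement, and ensuring the losses stay $\lessapprox$. We handle this by performing the pigeonholings in the order above, each time refining only the already-refined family, as Katz--Zahl do. Where this bookkeeping becomes heavy, we defer to \cite{KZ} for the details.
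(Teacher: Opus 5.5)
The paper does not prove this lemma: it is quoted from \cite{KZ}, so deferring the bookkeeping to Katz--Zahl matches what the paper does. However, two steps you do spell out fail as written.

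\textbf{Step~1.} Minimality of $\rho(Q)$ with respect to a single threshold does not yield (ii). Take a cube $Q$ with the following tubes of $\mathbb T_Y(Q)$: $\frac{\mu}{2}$ of them lie within $\delta$ of a plane $\Pi_0$, with directions spread along it; one tube makes angle $\rho_0\gg\delta$ with $\Pi_0$; the rest are spread over directions at angle $\gtrsim1$ from $\Pi_0$. This configuration is robustly transverse. Read ``most'' as ``more than half'' (any fixed threshold behaves the same way). Then your rule gives $\rho(Q)\sim\rho_0$ and $\Pi(Q)\approx\Pi_0$.

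If you keep all tubes through $Q$, as your refinement does, (i) fails, since it asks that \emph{every} tube of $\mathbb T_{Y_1}(Q)$ lie within $\rho$ of $\Pi(Q)$. If you discard the far tubes, the only triples with $|V\wedge V\wedge V|\gtrsim\rho_0$ are the $O(\mu^2)$ triples containing the tilted tube, so (ii) fails. The problem is that minimality gives no quantitative drop in the captured fraction between the scales $\frac{\rho}{2}$ and $\rho$. Such a drop has to be located by pigeonholing over the $O(\log\frac1\delta)$ dyadic scales. For instance, take the largest scale at which shrinking $\rho$ by a constant factor loses a $\gtrsim(\log\frac1\delta)^{-1}$ proportion of the captured tubes; a $\gtrsim1$ proportion is then still captured there. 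This is why (ii) is only claimed with $\gtrapprox$.

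\textbf{Step~2.} Property (vii) does not follow from defining $Y_\rho(T_\rho)$ as all $\rho$-cubes meeting the fine shadings. Thickening normalizes by occupied $\rho$-cubes rather than by fine incidences. Let every fine tube in $T_\rho$ place the permitted fraction $r^{\varepsilon_1}$ of its shading sparsely in a ball $B(x,r)$ with $r=\rho^{1/2}$, so that it meets all $\sim\rho^{-1/2}$ of the $\rho$-cubes there. Pack the rest into $\sim\rho^{-\varepsilon_1}$ clumps of size $\rho$, arranged as an $\varepsilon_1$-dimensional set. These fine shadings are $(\varepsilon_1,O(1))$-two-ends. Yet for $\varepsilon_1<\frac12$ almost all cubes of $Y_\rho(T_\rho)$ lie in $B(x,r)$. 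This forces $C_3\gtrsim\rho^{-\varepsilon_1/2}$, which is a power of $\delta^{-1}$ whenever $\rho$ is a power of $\delta$, so $C_3\lessapprox C_1$ fails.

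The missing ingredient is the intermediate uniformization of the Katz--Zahl construction that the paper recalls in the proof of Theorem~\ref{thm:improved-two-ends}. One fixes $M$ with $\sum_{T\in\mathbb T_3(T_\rho)}\#\{Q\subset Y_3(T)\cap Q_\rho\}\sim M$ for every active pair $(T_\rho,Q_\rho)$, trimming the fine shadings so that (v) survives. Then, for $r\ge\rho$, the number of cubes of $Y_\rho(T_\rho)$ in $B(x,r)$ is $\lesssim M^{-1}$ times the fine incidences in $B(x,3r)$. Their total is $\sim M^{-1}n\lambda_3\delta^{-1}$, where $n$ is the number of fine tubes in $T_\rho$. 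Together these give (vii) with $C_3\lesssim C_1\frac{\lambda_Y}{\lambda_3}\lessapprox C_1$.

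\textbf{Step~3.} Two smaller points. First, your bound for (xi) uses $\widetilde\mu$ as a pointwise bound on the number of fine tubes of a fixed $T_\rho$ through a fixed $Q$. Property (ix) controls only an average, so one more uniformization is needed. Second, the later refinements delete tubes from individual fine cubes, which can destroy (ii) and the lower bound in (viii). Performing the pigeonholings ``in order'' therefore does not by itself preserve the earlier properties; they have to be re-established.
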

Furthermore, the above theorem leads to a rather useful result regarding the trilinear branch of the Katz--Zahl plany/trilinear decomposition. After the usual refinement, for each relevant $\delta$-cube $Q$, either the tubes in $\mathbb T_Y(Q)$ are concentrated near a $2$-plane at scale $\rho$, leading to the plany case, or a positive proportion of triples are quantitatively transverse, that is, satisfy the condition in \eqref{quanttrans}. This allows us to establish other useful upper-bound for the average multiplicity of the family in question. The obtained bound will correspond to the $G$-estimate in Section~\ref{Gestimate} and can be stated as follows:
\begin{corollary}[Corollary $2.1$, \cite{BCCLXZ}]
Let $(\mathbb{T}, Y)$ be a family of $1$-parallel tubes such that 
\[
\#
\left\{
T', T'', T''' \in \mathbb{T}_Y(Q) :
\left|V(T') \wedge V(T'') \wedge V(T''')\right|
\gtrsim \rho
\right\}
\gtrapprox \left(\#\mathbb{T}_Y(Q)\right)^3.
\]
Then, the average multiplicity has the following estimate
\[
\mu_Y
\lessapprox
\lambda^{-\frac{9}{4}}
\rho^{-1}
\delta^{-\frac{3}{4}}
\left(\delta^3 \#\mathbb{T}\right)^{\frac{3}{4}}.
\]
\end{corollary}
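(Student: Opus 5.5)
The plan is to feed the pointwise transversality hypothesis into the trilinear Kakeya estimate of Theorem~\ref{trilinearKakeya}, integrate only over the shaded cubes, and then convert the resulting lower bound into an upper bound for $\mu_Y$ through the incidence identity relating $\#\mathcal Q_Y$, $\mu_Y$ and $\lambda_Y$. Write $X=\delta^3\#\mathbb T$ and $\mu(Q)=\#\mathbb T_Y(Q)$.

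\emph{Step 1: dyadic pigeonholing in the multiplicity.} Partition $\mathcal Q_Y$ into $O(|\log\delta|)$ classes on which $\mu(Q)$ is comparable to a dyadic value. Choose a class $\mathcal Q'$ carrying a $\gtrapprox 1$ proportion of $\sum_Q\mu(Q)$. On $\mathcal Q'$ we then have $\mu(Q)\sim\mu'$ with $\mu'\gtrapprox\mu_Y$, and
\[
\#\mathcal Q'\,\mu'\gtrapprox\sum_{Q}\mu(Q)=\delta^{-4}\sum_{T}|Y(T)|=\lambda_Y\delta^{-4}X .
\]
This uses $|T|\sim\delta^3$ and $|Y(T)|=\#\{Q\subset Y(T)\}\,\delta^4$. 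The transversality hypothesis is a statement about each cube separately, so it survives this restriction unchanged.

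\emph{Step 2: pointwise lower bound for the trilinear integrand.} On each $Q\in\mathcal Q'$, every tube in $\mathbb T_Y(Q)$ contains $Q$. By hypothesis, $\gtrapprox\mu(Q)^3$ of the triples have $|V(T')\wedge V(T'')\wedge V(T''')|\gtrsim\rho$. Hence the integrand of Theorem~\ref{trilinearKakeya} is $\gtrapprox\bigl(\mu'^3\rho^{12/13}\bigr)^{13/27}=\mu'^{13/9}\rho^{4/9}$ on $Q$. Integrating over $\mathcal Q'$ and applying the theorem gives
\[
\#\mathcal Q'\,\delta^4\,\mu'^{13/9}\rho^{4/9}\lessapprox\delta^{-1/3}X^{4/3}.
\]

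\emph{Step 3: algebra.} Substitute $\#\mathcal Q'\delta^4\gtrapprox\lambda_Y X/\mu'$ from Step 1. This yields $\lambda_Y X\mu'^{4/9}\rho^{4/9}\lessapprox\delta^{-1/3}X^{4/3}$. Raising to the power $9/4$ gives
\[
\mu_Y\lessapprox\mu'\lessapprox\lambda_Y^{-9/4}\rho^{-1}\delta^{-3/4}X^{3/4},
\]
which is exactly the claimed estimate.

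\emph{Main obstacle.} The only non-routine point is that Theorem~\ref{trilinearKakeya} is stated for $\delta$-separated directions, whereas we only assume $1$-parallel. I would split $\mathbb T$ into $O(1)$ subfamilies $\mathbb T_1,\dots,\mathbb T_K$, each with $\delta$-separated directions, by a bounded colouring of the $\delta$-grid on $\mathbb S^3$. The trilinear sum then splits into $O(1)$ mixed sums over $\mathbb T_i\times\mathbb T_j\times\mathbb T_k$. Each mixed sum is controlled by the genuinely multilinear form of the Guth--Zahl/Katz--Rogers estimate, which applies to three possibly different separated families. Alternatively, one can pigeonhole a single triple $(i,j,k)$ that carries a fixed proportion of the transverse triples at a $\gtrapprox1$ fraction of the cubes. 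Either route costs only $O(1)$ factors, which are absorbed into $\lessapprox$.
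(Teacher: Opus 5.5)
Your proposal is correct and is essentially the paper's derivation of the $G$-estimate: you bound the trilinear integrand below by $\mu^{13/9}\rho^{4/9}$ on the shaded set, apply Theorem~\ref{trilinearKakeya}, and convert via $|E|\approx\lambda X/\mu$; the only difference is that you obtain uniform multiplicity by dyadic pigeonholing, whereas the paper takes it from property $(viii)$ of Lemma~\ref{KZconstruction}. On your ``main obstacle'': in the paper's application the family is a subfamily of a direction-separated one, so the issue never arises; in general the cleanest fix is that the Guth--Zahl argument only uses the polynomial Wolff axioms, which a union of $O(1)$ separated families still satisfies (pigeonholing a single $(i,j,k)$ on its own would not avoid needing a mixed estimate when $i,j,k$ differ).
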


As a final important remark, it is useful to see that if the family before refinement in the sense of Lemma~\ref{KZconstruction} is robustly transverse, then the fine family satisfies the angular non-concentration property with the appropriate adapted parameters.
\begin{corollary}[Transversality of the fine refinement]
\label{cor:fine-transverse}
Let $(\mathbb T,Y)$ be
$(\varepsilon_2,C_2)$-robustly transverse, and let $(\mathbb T_1,Y_1)$ be the fine refinement supplied by Lemma~\ref{KZconstruction}. Suppose that the latter is a $t$-refinement of $(\mathbb{T},Y)$ for some parameter $t$. Then, for every fine shaded cube $Q$, every direction $W$, and every $\delta<r<1$,
\[
\#\left\{
T\in\mathbb T_{Y_1}(Q):
\angle(V(T),W)\le r
\right\}
\le
C_2t^{-1}r^{\varepsilon_2}\mu_{Y_1}.
\]
In particular, since the refinement is of logarithmic/subpower order, we obtain that the family $(\mathbb{T}_1,Y_1)$ satisfies condition \eqref{angular} for $\alpha>0$ and sufficiently small $\delta$.
\end{corollary}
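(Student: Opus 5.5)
The plan is to compare, cube by cube, the fine multiplicity $\#\mathbb T_{Y_1}(Q)$ with the coarse multiplicity $\#\mathbb T_Y(Q)$, and then to compare the coarse average $\mu_Y$ with the fine average $\mu_{Y_1}$ using the refinement parameter $t$.

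First I fix a fine shaded cube $Q\in\mathcal Q(Y_1)$, a direction $W$ and a scale $\delta<r<1$. By Lemma~\ref{KZconstruction}, $\mathbb T_1\subset\mathbb T$ and $Y_1(T)\subset Y(T)$. Hence $Q\subset Y_1(T)$ implies $Q\subset Y(T)$, so $\mathbb T_{Y_1}(Q)\subset\mathbb T_Y(Q)$, and in particular $Q\in\mathcal Q(Y)$. The robust transversality of $(\mathbb T,Y)$ then gives directly
\[
\#\{T\in\mathbb T_{Y_1}(Q):\angle(V(T),W)\le r\}\le\#\{T\in\mathbb T_Y(Q):\angle(V(T),W)\le r\}\le C_2r^{\varepsilon_2}\mu_Y .
\]

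Second, I need $\mu_Y\le t^{-1}\mu_{Y_1}$. Write $\sum_{Q\in\mathcal Q_Y}\#\mathbb T_Y(Q)=\delta^{-4}\sum_{T}|Y(T)|$, and similarly for $Y_1$. Then
\[
\mu_Y=\frac{\delta^{-4}\sum_T|Y(T)|}{\#\mathcal Q_Y}\le\frac{t^{-1}\delta^{-4}\sum_T|Y_1(T)|}{\#\mathcal Q_Y}.
\]
This is where I expect the only real obstacle. Dividing by $\#\mathcal Q_Y$ instead of $\#\mathcal Q_{Y_1}$ goes in the favourable direction, since $\mathcal Q_{Y_1}\subset\mathcal Q_Y$ means the denominator is larger. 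So the chain gives
\[
\mu_Y\le t^{-1}\,\frac{\delta^{-4}\sum_T|Y_1(T)|}{\#\mathcal Q_Y}\le t^{-1}\mu_{Y_1}.
\]
Inserting this into the first display yields the claimed bound $C_2t^{-1}r^{\varepsilon_2}\mu_{Y_1}$.

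For the final assertion, take $r=\delta^{\alpha}$. Since the refinement is $\gtrapprox$, $t^{-1}\lesssim|\log\delta|^{O(1)}$ (or at worst $\delta^{-o(1)}$). Then
\[
C_2t^{-1}\delta^{\alpha\varepsilon_2}\le\tfrac1{10}
\]
for $\delta$ small enough, with $\alpha$ fixed and $C_2$ independent of $\delta$. This is exactly condition \eqref{angular} for $(\mathbb T_1,Y_1)$.
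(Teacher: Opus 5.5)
Your proposal is correct and follows the paper's proof essentially step for step. The paper also combines the inclusion $\mathbb T_{Y_1}(Q)\subset\mathbb T_Y(Q)$ with the robust transversality of $(\mathbb T,Y)$, and obtains $\mu_Y\le t^{-1}\mu_{Y_1}$ from the retained incidence together with $\mathcal Q_{Y_1}\subset\mathcal Q_Y$. It then takes $r=\delta^{\alpha}$ and lets the subpower loss $t^{-1}$ be beaten by $\delta^{\alpha\varepsilon_2}$ for small $\delta$, phrasing this as $t\gtrsim_\sigma\delta^{\sigma}$ for some $0<\sigma<\alpha\varepsilon_2$, with $C_2$ independent of $\delta$ as you assume.
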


\begin{proof}
Since $\mathcal{Q}_{Y_1}\subseteq \mathcal{Q}_Y$ and the refined incidence is at least $t$ times the original one,
\begin{equation}
\label{multiplicity}
\mu_{Y_1}
=
\frac{1}{\#\mathcal{Q}_{Y_1}}
\sum_{Q\in \mathcal{Q}_{Y_1}}
\#\mathbb{T}_{Y_1}(Q)
\ge
t\frac{1}{\#\mathcal{Q}_Y}
\sum_{Q\in \mathcal{Q}_Y}
\#\mathbb{T}_Y(Q)
=t\mu_Y.
\end{equation}
Every tube counted on the left-hand
side of \eqref{multiplicity} is also counted in the original family. Therefore, the robust transversality of $(\mathbb T,Y)$ gives
\[
\#\{T\in\mathbb T_{Y_1}(Q):
\angle(V(T),W)\le r\}\le C_2r^{\varepsilon_2}\mu_Y\le C_2t^{-1}r^{\varepsilon_2}\mu_{Y_1}.
\]
Setting $r=\delta^\alpha$ and using the fact that, for sufficiently small values of $\delta$, there exists a parameter $\sigma$ that satisfies $0<\sigma<\alpha\varepsilon_2$ and $t\gtrsim_\sigma\delta^\sigma$, we prove the final assertion.
\end{proof}

\section{Proof of Theorem~\ref{thm:main-restriction} and Sharpness}
We follow a similar procedure to \cite{BCCLXZ}, that is, we first establish a suitable incidence estimate and, using the decoupling ingredients provided in \cite{WW24}, get the desired bound.
\subsection{A first improved two-ends incidence estimate}
In order to get the first meaningful result, we shall introduce some notation that will help on characterizing the incidence estimates for a two-ends family with its associated shading.
\begin{definition}[$\mathbf{TE}(d,a,b)$]
\label{TEdef}
Let $\delta>0$, and let $(\mathbb{T},Y)$ be a set of $\delta$-tubes in
$\mathbb{R}^4$ pointing in $\delta$-separated directions, together with
their associated shading. Suppose that $(\mathbb{T},Y)$ is
$(\varepsilon_1,100)$-two-ends. Then, for each $\varepsilon>0$,
\begin{equation}
\left|
\bigcup_{T\in\mathbb{T}} Y(T)
\right|
\geq
c\,\lambda_Y^a\,
\delta^{4-d+\varepsilon}
\left(\delta^3 \#\mathbb{T}\right)^b,
\end{equation}
where $c=c(d,a,b,\varepsilon,\varepsilon_1)>0$ is a constant independent of $\delta$.
\end{definition}
\begin{definition}[$\mathbf{RT}(d,a,b)$]
Let $\delta>0$, and let $(\mathbb{T},Y)$ be a set of $\delta$-tubes in
$\mathbb{R}^4$ pointing in $\delta$-separated directions, together with
their associated shading. Suppose that $(\mathbb{T},Y)$ is
$(\varepsilon_1,C_1)$-two-ends and
$(\varepsilon_2,100)$-robustly transverse. Then, for each
$\varepsilon>0$,
\begin{equation}
\left|
\bigcup_{T\in\mathbb{T}} Y(T)
\right|
\geq
c\,\lambda_Y^a\,
C_1^{-\frac{C}{\varepsilon_1}}\,
\delta^{4-d+\varepsilon}
\left(\delta^3 \#\mathbb{T}\right)^b,
\end{equation}
where $C=C(d,a,b)$
and $c=c(d,a,b,\varepsilon,\varepsilon_1,\varepsilon_2)>0$ are independent of $\delta$.
\end{definition}
\begin{remark}
\label{C1remark}
Note that we may always assume $C_1\geq 1$, since enlarging the two-ends constant does not affect the hypothesis. Moreover, we will not explicitly track the changes in the constant $C$ while deriving \textbf{RT}-estimates to avoid heavy calculations.
\end{remark}
There is also a relation between those two definitions under certain conditions proved by Katz--Zahl.
\begin{proposition}[Proposition $14$, \cite{KZ}]
\label{RTtoTE}
Suppose that the assertion $\mathbf{RT}(d,a,b)$ is true for some $1\leq d\leq 4,$ $a\geq 0$, 
$\frac{d-1}{3}\leq b\leq 1$. Then, assertion $\mathbf{TE}(d,a,b)$ is true.
\end{proposition}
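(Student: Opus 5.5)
We prove $\mathbf{TE}(d,a,b)$ by induction on scales, as in Katz--Zahl: we reduce an arbitrary two-ends family either to a robustly transverse one, where $\mathbf{RT}(d,a,b)$ applies, or to a family concentrated in thinner tubes, which we rescale. Fix $\varepsilon>0$ and let $(\mathbb T,Y)$ be $(\varepsilon_1,100)$-two-ends with $\delta$-separated directions. Suppose, toward an induction on $\delta$, that $\mathbf{TE}(d,a,b)$ holds at all scales $\delta'\geq \delta^{1-\eta}$ for a small $\eta=\eta(\varepsilon)$. After dyadic pigeonholing we may assume that the shading is $\lambda_Y$-dense and that $\#\mathbb T_Y(Q)\approx\mu_Y$ on a refinement. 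We then test robust transversality with a small $\varepsilon_2$. If it holds with constant $100$, we apply $\mathbf{RT}(d,a,b)$ directly. The two-ends constant is then $C_1=100$, so $C_1^{-C/\varepsilon_1}$ is a harmless constant and the bound follows immediately.

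Otherwise, there are many cubes $Q$, a direction $W$ and a radius $r>\delta$ such that more than $100r^{\varepsilon_2}\mu_Y$ of the tubes through $Q$ lie within angle $r$ of $W$. We pigeonhole a dyadic $r$ carrying a large fraction of the incidences. Following the standard Wolff/Katz--Zahl rescaling, we cover $\mathbb T$ by essentially distinct tubes $T_r$ of radius $\approx r$. Each $T_r$ contains the $\delta$-tubes within angle $r$ of its axis. This splits $E_Y$ into pieces of two kinds: the union inside each $T_r$, which after anisotropic rescaling is a family of $\frac{\delta}{r}$-tubes, and the arrangement of the fat tubes $T_r$ at scale $r$. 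The fat family has $r$-separated directions up to multiplicity, and the concentration hypothesis gives a gain $r^{-\varepsilon_2}$ in its multiplicity relative to the fine one. We apply the inductive hypothesis $\mathbf{TE}(d,a,b)$ at scale $\frac{\delta}{r}$ inside each fat tube and at scale $r$ to the fat family. Both are strictly larger than $\delta$ because $r\in[\delta^{1-\eta},\delta^{\eta}]$; for extreme $r$ we use the trivial bound $|E_Y|\gtrsim\lambda_Y\delta^{3}\#\mathbb T\,\mu_Y^{-1}$ together with the gain from $r^{\varepsilon_2}$. Multiplying the two estimates gives a lower bound for $|E_Y|$.

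The key computation is checking the exponents. Write $N_r=r^3\#\mathbb T_r$ for the fat count and $N_{\mathrm{in}}$ for the number of thin tubes per fat tube. The product of the two estimates has the form $\lambda^a\delta^{4-d}(\delta^3\#\mathbb T)^b$, times a factor $\left(\frac{(\delta/r)^3 N_{\mathrm{in}}}{\ldots}\right)^{b}$-type discrepancy. By H\"older and the constraint that directions are $\delta$-separated, $N_{\mathrm{in}}\lesssim (r/\delta)^3$, so this discrepancy is nonnegative exactly when $b\geq\frac{d-1}{3}$; the upper bound $b\le 1$ controls the summation over fat tubes via $\sum x_i^b\ge(\sum x_i)^b$. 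We must also verify that the rescaled families remain $(\varepsilon_1,100)$-two-ends, possibly after passing to a refinement. Inside a fat tube, two-ends at relative scales follows from the original condition, with constant $100$ after a maximal-concentration localization as in Lemma~\ref{quant-planar}. For the fat shading $Y_r$ it is inherited by union.

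The main obstacle is closing the induction without an accumulating loss. Each rescaling costs $\delta^{-O(\eta)}$ and logarithmic refinement factors. These must be beaten by the gain $r^{-\varepsilon_2}\geq\delta^{-\eta\varepsilon_2}$ from the failure of robust transversality, so the parameters have to be chosen in the order $\varepsilon_2$ first, then $\eta$, then the loss allowed by $\mathbf{RT}$. I would arrange the induction as a statement at scale $\delta$ with constant $c_\varepsilon\delta^{\varepsilon}$, exactly as in Proposition 14 of \cite{KZ}, and follow that argument.
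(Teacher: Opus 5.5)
The paper gives no proof of this statement; it imports Proposition~14 of \cite{KZ}, so your sketch has to stand on its own. Your dichotomy (apply $\mathbf{RT}$ when the family is robustly transverse, otherwise pass to $r$-tubes and induct on scales) is the right architecture. The coarse-scale step as you describe it, however, does not work.

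\textbf{Why applying $\mathbf{TE}$ to the fat family fails.} If you apply $\mathbf{TE}(d,a,b)$ at scale $r$ to the fat family and multiply, you must divide by the coarse density. The longitudinal extent of the shading is counted both in $|E_r|$ and in the rescaled union $\widetilde E$ inside a fat tube. Indeed, the part of $E_Y$ coming from one fat tube has density only about $|\widetilde E|/\lambda_r$ in a typical coarse cube of $Y_r(T_r)$. The product is therefore
$|E_Y|\gtrsim |E_r|\,|\widetilde E|\,\lambda_r^{-1}\gtrsim \lambda_r^{a-1}\lambda^a\delta^{4-d}(X_r\widetilde X)^b$.
The factor $\lambda_r^{a-1}$ is not bounded below when $a>1$, which is the relevant regime: every $\mathbf{TE}$ in this paper has $a>1$. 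If the thin shadings in each fat tube sit on a common segment, then $\lambda_r\approx\lambda$ and you get $\lambda^{2a-1}$.

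There are further problems with this route.
\begin{itemize}
\item The fat family is $m$-parallel. Passing to separated directions costs $m^{-b}$, which can be as small as $\widetilde X^{\,b}$ with $\widetilde X=(\delta/r)^3N_{\mathrm{in}}$. The bound $N_{\mathrm{in}}\lesssim(r/\delta)^3$ only gives $\widetilde X\lesssim 1$, i.e.\ it bounds this factor from above, not below.
\item Since $X_r\widetilde X=X$ and $r^{4-d}(\delta/r)^{4-d}=\delta^{4-d}$, the hypothesis $b\ge\frac{d-1}{3}$ never actually enters such a product. That alone signals the bookkeeping is off.
\item The product of the two inductive bounds carries $c^2r^{\varepsilon}(\delta/r)^{\varepsilon}=c^2\delta^{\varepsilon}$, so there is no spare factor to absorb pigeonholing losses.
\item The ``gain $r^{-\varepsilon_2}$'' you invoke does not exist. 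Failure of robust transversality only says that one fat tube carries an $r^{\varepsilon_2}$-fraction of the tubes through a bad cube, which is a \emph{loss} when you pass to that fat tube. Even as you state it, $\delta^{-\eta\varepsilon_2}$ could not beat costs of size $\delta^{-O(\eta)}$.
\end{itemize}

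\textbf{What is needed instead.} Use no nontrivial estimate at the coarse scale.
\begin{enumerate}
\item Use the failure of robust transversality only to assign each bad cube to one fat tube containing $\gtrsim r^{\varepsilon_2}\mu_Y$ of its tubes.
\item Restrict the in-tube shadings to the cubes they dominate. This keeps an $r^{\varepsilon_2}$-fraction of the incidences and makes the resulting unions $E'(T_r)$ disjoint, so $|E_Y|\ge\sum_{T_r}|E'(T_r)|$. The cost is a small power $r^{O(\varepsilon_2)}$, with implied constant depending on $a$ and $\varepsilon_1$, coming from the lower density and from relocalizing to restore the two-ends condition as in Lemma~\ref{lem:quantitative-TE}. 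This relocalization is also what replaces your claim that two-ends is simply inherited.
\item Anisotropic rescaling preserves $\lambda$ and contributes a volume factor $r^3$. Apply the inductive hypothesis at the coarser scale $\delta/r$, and sum using $\sum_{T_r}\widetilde X_{T_r}^{\,b}\ge\bigl(\sum_{T_r}\widetilde X_{T_r}\bigr)^b=(r^{-3}X)^b$, valid since $b\le 1$.
\end{enumerate}
This yields
$|E_Y|\gtrapprox c\,r^{O(\varepsilon_2)-\varepsilon}\,r^{d-1-3b}\,\lambda^a\delta^{4-d+\varepsilon}X^b$.

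This is exactly where $b\ge\frac{d-1}{3}$ enters, through $r^{d-1-3b}\ge 1$. You can check it on a family filling a single $r$-tube: the inductive bound gives $r^{d-1}\lambda^a\delta^{4-d}$ against the target $r^{3b}\lambda^a\delta^{4-d}$. The factor that closes the induction is the $r^{-\varepsilon}$ gained at the coarser scale $\delta/r$. So $\varepsilon_2$ must be chosen small relative to $\varepsilon$ (and $\varepsilon_1$), not the other way round.
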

We prove the following estimate:
\begin{theorem}\label{thm:improved-two-ends}
Let $(\mathbb T,Y)$ be a $(\varepsilon_1,C_1)$-two-ends family of $\delta$-tubes in $\mathbb R^4$, with tubes pointing in $\delta$-separated directions. Then, the following estimate is valid:
\begin{equation}\label{eq:main-union}
\left|
\bigcup_{T\in\mathbb T} Y(T)
\right|
\gtrapprox C_1^{-\frac{C}{\varepsilon_1}}
\lambda^{\frac{179}{89}}
\delta^{\frac{87}{89}}
\bigl(\delta^3 \#\mathbb T\bigr)^{\frac{493}{534}},
\end{equation}
for some constant $C>0$. As a consequence, the assumption 
$\mathbf{TE}
\left(
\frac{269}{89},
\frac{179}{89},
\frac{493}{534}
\right)$
holds.
\end{theorem}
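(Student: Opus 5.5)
We first reduce to the robustly transverse setting. The target is of the form $\mathbf{RT}\left(\frac{269}{89},\frac{179}{89},\frac{493}{534}\right)$. Here $4-d=\frac{87}{89}$ and $b=\frac{493}{534}\geq \frac{d-1}{3}=\frac{180}{267}$, so Proposition~\ref{RTtoTE} upgrades it to the $\mathbf{TE}$ assertion. It therefore suffices to prove \eqref{eq:main-union} for a family that is $(\varepsilon_1,C_1)$-two-ends and $(\varepsilon_2,100)$-robustly transverse, keeping track of the factor $C_1^{-C/\varepsilon_1}$.

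Next we apply Lemma~\ref{KZconstruction}. This gives a scale $\rho$, the fine family $(\mathbb T_1,Y_1)$, the coarse family $(\mathbb T_\rho,Y_\rho)$, the numbers $D,D_1,D_2$, the quantity $\widetilde\mu$, and the multiplicity of $\rho$-tubes per direction. By dyadic pigeonholing, all of these are essentially constant, and since $|E_Y|\geq |E_{Y_1}|\approx \lambda\delta^{3}\#\mathbb T\,\mu_{Y_1}^{-1}$ it is enough to bound $\mu_{Y_1}$. Each ingredient then gives one upper bound on $\mu_{Y_1}$ in terms of $\lambda,\delta,X=\delta^3\#\mathbb T,\rho,D$ and the latent parameter $M$.

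\begin{enumerate}[label=(\alph*)]
\item \emph{Trilinear / $G$-estimate.} Property (ii) together with the Corollary of \cite{BCCLXZ} gives $\mu_{Y_1}\lessapprox\lambda^{-9/4}\rho^{-1}\delta^{-3/4}X^{3/4}$.
\item \emph{Fine hairbrush.} Corollary~\ref{cor:fine-transverse} provides angular non-concentration, so \eqref{transverse-hairbrush} applies and gives $\mu_{Y_1}\lessapprox C_1^{C/\varepsilon_1}\lambda^{-3/4}\delta^{-1}X^{1/2}$.
\item \emph{Planebrush route.} Use (ix) and (xi): $\mu_{Y_1}\lessapprox \mu_{Y_\rho}\widetilde\mu/D$. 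Bound $\widetilde\mu$ by rescaling each $T_\rho$ to a unit tube and applying \eqref{general-hairbrush} to the $\delta/\rho$-tubes inside it. Bound $\mu_{Y_\rho}$ by the Katz--Zahl planebrush argument at scale $\rho$, using (x) and the $m$-parallel version of \eqref{general-hairbrush} with $m$ equal to the number of $\rho$-tubes per direction.
\item \emph{The new $M$-dependent estimate.} We retain the parameter $M$ that \cite{BCCLXZ} discards (the number of $\rho$-tubes per direction, or equivalently the clustering count in the planebrush). This yields an estimate that improves (c) when $M$ is large and is weak when $M$ is small, while (c) behaves in the opposite way.
\end{enumerate}

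We then take a weighted geometric mean of the four bounds,
\[
\mu_{Y_1}\lessapprox \prod_j B_j^{\theta_j},\qquad \textstyle\sum_j\theta_j=1,\ \theta_j\geq0,
\]
choosing the weights $\theta_j$ so that the exponents of $\rho$, $D$ and $M$ all cancel. The result is $\mu_{Y_1}\lessapprox C_1^{C/\varepsilon_1}\lambda^{-(179/89-1)}\delta^{-(1-87/89)}\,\cdots X^{1-493/534}$. The fractions $\frac{179}{89}$ and $\frac{493}{534}$ arise as the solution of this small linear system.

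Finally, we substitute into $|E_{Y_1}|\gtrsim\lambda\delta^3\#\mathbb T_1/\mu_{Y_1}$ and use the $\gtrapprox$ loss from the refinement, which gives \eqref{eq:main-union}.

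The main obstacle is item (d): formulating and proving the $M$-dependent estimate. We would first try to count $\rho$-tubes in a single direction class via a two-ends Wolff/planar bound (Lemma~\ref{quant-planar}) inside each $\rho$-slab. The delicate part is checking that the interpolation weights are nonnegative in every regime of $\rho$ and $D$. Where they are not, we would fall back to case splits that use only two or three of the estimates, as in \cite{BCCLXZ}.
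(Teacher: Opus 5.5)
Your reduction to the robustly transverse case, your use of Lemma~\ref{KZconstruction}, and items (a) and (c) agree with the paper's $G$- and $P$-estimates. The hairbrush bound (b) is not needed for this theorem. The genuine gap is item (d): it is the whole new content of the theorem, and you leave it unformulated.

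You also misidentify $M$. The number of $\rho$-tubes per direction is $A$ (property (iii)). It is already used in the $A$-parallel coarse hairbrush bound and then eliminated via $X_1/X_\rho\lesssim A^{-1}$. The planar clustering count is $D$. Retaining either of these gives nothing new. In the paper, $M$ is the number of fine $\delta$-cube incidences that the descendants of a coarse tube $T_\rho$ produce inside one coarse cube $Q_\rho$ of its shading. It is uniform at an intermediate stage $(\mathbb T_3,Y_3)$ of the Katz--Zahl construction.

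The missing estimate comes from the trilinear inequality applied \emph{locally}, not from a planar two-ends count. The steps are:
\begin{enumerate}[label=(\roman*)]
\item Pick a coarse cube $Q_\rho$ carrying $\gtrapprox\mu_\rho M$ fine incidences. Uniform fine multiplicity then gives $|E_{Q_\rho}|\gtrapprox\delta^4\mu_\rho M/\mu_1$.
\item Only $\lessapprox\mu_\rho n$ fine tubes meet $Q_\rho$, where $n\approx\#\mathbb T_1/\#\mathbb T_\rho$.
\item Theorem~\ref{trilinearKakeya} and property (ii) then give $\mu_1\lessapprox\delta^{-3/4}\rho^{-1}\mu_\rho^{3/4}n^3M^{-9/4}$.
\item $M$ is not cancelled by interpolation. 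The Katz--Zahl density relation $\lambda_3\sim\frac{\delta}{\rho}M\frac{\#\mathbb T_\rho}{\#\mathbb T_3}\lambda_\rho$ converts it into $M\gtrapprox\frac{\rho}{\delta}nq^{-1}$, with $q=\lambda_\rho/\lambda_3$.
\item Inserting this, the coarse hairbrush bound for $\mu_\rho$, and $X_1/X_\rho\lesssim A^{-1}$ yields $R=C_1^{C/\varepsilon_1}q^{27/16}\lambda_1^{-9/16}\delta^{-3/4}\rho^{-7/4}X_1^{1/4}$.
\end{enumerate}

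This omission is fatal to the plan. Take any geometric mean $G^wP^uH^z$ of the bounds (a)--(c) and eliminate $\rho$ via $\delta\le\rho\le1$. If you force the loss to be $\delta^{-87/89}$, the $\lambda$-exponent of the resulting $\mathbf{TE}$ is at least $\frac74+\frac{26}{89}=\frac{727}{356}>\frac{179}{89}$. So your ``small linear system'' has no admissible solution without $R$.

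With $R$ available, the paper uses the fixed weights $G^{2/267}P^{81/89}R^{22/267}$ in every regime, so no case splits are needed. The $\rho$-power vanishes identically. The only residue is $q^{-45/356}$, which is harmless because $q\gtrapprox1$. The opposite signs, $q^{27/16}$ in $R$ and $q^{-7/24}$ in $P$, are the correct form of your ``large $M$ versus small $M$'' trade-off.

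Finally, your target multiplicity bound should carry $\delta^{-87/89}$, not $\delta^{-(1-87/89)}$.
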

\begin{proof}
Let us first assume that the family of tubes in question is, additionally, $(\varepsilon_2,100)$-robustly transverse. This extra assumption will be removed at the end of the argument. Apply the Katz--Zahl fine/coarse decomposition established in Lemma~\ref{KZconstruction}. We obtain a scale $\delta\leq \rho\leq 1$, a fine family $(\mathbb T_1,Y_1),$ and a coarse family of $\rho$-tubes $(\mathbb T_\rho,Y_\rho).$
Write $\mu_1:=\mu_{Y_1},$ $
\mu_\rho:=\mu_{Y_\rho},$
and define
$$
X_1:=\delta^3\#\mathbb T_1,
\qquad
X_\rho:=\rho^3\#\mathbb T_\rho.
$$
The decomposition supplies the following properties: first, the fine multiplicity is uniform:
$\#\mathbb T_{Y_1}(Q) \approx
\mu_1$, for every surviving shaded fine $\delta$-cube $Q$. Second, in each such $Q$,
\begin{equation}\label{eq:trilinearity}
\#\left\{
(T_1,T_2,T_3)\in
\mathbb T_{Y_1}(Q)^3:
\left|
V(T_1)\wedge V(T_2)\wedge V(T_3)
\right|
\gtrsim \rho
\right\}
\gtrapprox
\mu_1^3.
\end{equation}
Third, each coarse tube contains a uniform number
$n
\approx
\frac{\#\mathbb T_1}{\#\mathbb T_\rho}$
of fine descendants. In terms of $X_1$ and $X_\rho$, it can be expressed as the following
\begin{equation}\label{eq:n-X}
n
\approx
\left(\frac{\rho}{\delta}\right)^3
\frac{X_1}{X_\rho}.
\end{equation}
Fourth,
$\lambda_\rho
\gtrapprox
\lambda_1\gtrapprox \lambda$. Finally, the coarse multiplicity is uniform: $\#\mathbb T_{Y_\rho}(Q_\rho)
\approx \mu_\rho,$ for every active shaded $\rho$-cube $Q_\rho$.

At an intermediate stage of the Katz--Zahl refinement, a quantity $M$ is selected so that, for every active coarse pair $(T_\rho,Q_\rho)$,
$$
\sum_{T\in\mathbb T_3(T_\rho)}
\#\bigl\{
Q:Q\subset Y_3(T)\cap Q_\rho
\bigr\}
\sim M,
$$
where $\mathbb T_3(T_\rho)
=\{T\in\mathbb T_3:\ T\text{ is a fine tube belonging to the coarse tube }T_\rho\}$. Here $M$ is first obtained for the preceding refinement $(\mathbb T_2,Y_2)$, and Katz--Zahl explicitly note that this uniformity continues to hold for $(\mathbb T_3,Y_3)$. Their density relation gives (see Equation $(82)$ of \cite{KZ})
\begin{equation}\label{eq:KZ-density-M}
\lambda_3
\sim
\frac{\delta}{\rho}
M
\frac{\#\mathbb T_\rho}{\#\mathbb T_3}
\lambda_\rho.
\end{equation}
Therefore,
\begin{equation}\label{eq:M-expression}
M
\sim
\frac{\rho}{\delta}
\frac{\#\mathbb T_3}{\#\mathbb T_\rho}
\frac{\lambda_3}{\lambda_\rho}=\frac{\rho}{\delta}
\frac{\#\mathbb T_3}{\#\mathbb T_\rho}q^{-1}\gtrapprox \frac{\rho}{\delta}nq^{-1}, \text{ with } q=\frac{\lambda_{\rho}}{\lambda_3}.
\end{equation}
The final family $(\mathbb T_1,Y_1)$ is a logarithmic/subpower refinement of $(\mathbb T_3,Y_3)$, with
$\lambda_1\approx\lambda_3$ and
$\#\mathbb T_1\leq\#\mathbb T_3$.

Let $\mathcal{R}$ denote the collection of active $\rho$-cubes of the final coarse family. For $Q_\rho\in\mathcal R$, we define
$I_3(Q_\rho)$ as the number of fine tubes with $\delta$-cube incidences inside $Q_\rho$ at the intermediate $M$-uniform stage. There are $\approx\mu_\rho$ active coarse tubes through $Q_\rho$, and each active coarse pair contributes $\sim M$ fine incidences. Consequently,
\begin{equation}\label{eq:I3}
I_3(Q_\rho) \approx \mu_\rho M,
\end{equation}
for every $Q_\rho\in\mathcal R$.
Let $I_1(Q_\rho)$ be the analogous incidence count for the final fine family. Since $(\mathbb T_1,Y_1)$ is a logarithmic/subpower refinement of the intermediate family, it retains a logarithmic/subpower fraction of total shaded incidence. Therefore,
\begin{equation}\label{eq:incidence-retention}
\sum_{Q_\rho\in\mathcal R}
I_1(Q_\rho)=\delta^{-4} \sum_{T\in\mathbb{T}_1} |Y_1(T)| 
\gtrapprox \delta^{-4} \sum_{T\in\mathbb{T}_3} |Y_3(T)| \geq
\sum_{Q_\rho\in\mathcal R}
I_3(Q_\rho).
\end{equation}
Using \eqref{eq:I3},
$$
\frac{1}{\#\mathcal R}
\sum_{Q_\rho\in\mathcal R}
I_1(Q_\rho)
\gtrapprox
\mu_\rho M.
$$
By averaging, there exists at least one active $\rho$-cube $Q_\rho$ satisfying
\begin{equation}\label{eq:good-rho-cube}
I_1(Q_\rho)
\gtrapprox
\mu_\rho M.
\end{equation}
Fix such a $Q_\rho$. Set
$$
E_{Q_\rho}
:=
Q_\rho
\cap
\bigcup_{T\in\mathbb T_1}Y_1(T).
$$
Let
$$
\mathbb T_{Q_\rho}
:=
\left\{
T\in\mathbb T_1:
Y_1(T)\cap Q_\rho\neq\varnothing
\right\},
$$
and write
$N_{Q_\rho}
:=
\#\mathbb T_{Q_\rho}.$ There are $\approx\mu_\rho$ coarse tubes through $Q_\rho$, and each coarse tube contains $\approx n$ fine descendants. Therefore,
$N_{Q_\rho}
\lessapprox
\mu_\rho n.$ By definition and rearranging the sums, one has
$$
I_1(Q_\rho) = \sum_{T_\rho\in \mathbb{T}_\rho}
\sum_{T\in\mathbb T_1(T_\rho)}
\#\{Q:Q\subset Y_1(T)\cap Q_\rho\}=\sum_{\substack{Q\in \mathcal{Q}(Y_1)\\Q\subset Q_\rho}}
\#\mathbb T_{Y_1}(Q).
$$
Using the uniform fine multiplicity,
$$
\delta^{-4}|E_{Q_\rho}|\sim \#\{Q: Q\subset E_{Q_\rho}\}\approx \frac{I_1(Q_\rho)}{\mu_1}.
$$
Hence,
$$
|E_{Q_\rho}|
\approx
\delta^4
\frac{I_1(Q_\rho)}{\mu_1}.
$$
Combining this with \eqref{eq:good-rho-cube} gives the following lower-bound:
\begin{equation}\label{eq:EQ-lower}
|E_{Q_\rho}|
\gtrapprox
\delta^4
\frac{\mu_\rho M}{\mu_1}.
\end{equation}

\subsubsection{The $M$-retained estimate \texorpdfstring{$R$}{R}}
Apply the Guth--Zahl trilinear Kakeya estimate to $\mathbb T_{Q_\rho}$ expressed in Theorem~\ref{trilinearKakeya}:
\begin{equation}\label{eq:GZ-local}
\int_{\mathbb{R}^4}
\left(
\sum_{T',T'',T'''\in\mathbb{T}_{Q_\rho}}
\mathds{1}_{T'}
\mathds{1}_{T''}
\mathds{1}_{T'''}
\left|
V(T')\wedge V(T'')\wedge V(T''')
\right|^{\frac{12}{13}}
\right)^{\frac{13}{27}}
\lessapprox
\delta^{-\frac{1}{3}}
(\delta^3N_{Q_\rho})^{\frac{4}{3}}.
\end{equation}
Consider a fine shaded $\delta$-cube $Q\subset E_{Q_\rho}$. By \eqref{eq:trilinearity}, there are $\gtrapprox\mu_1^3$ triples
$
(T',T'',T''')
\in
\mathbb T_{Y_1}(Q)^3
$
satisfying
$$
\left|
V(T')\wedge V(T'')\wedge V(T''')
\right|
\gtrsim
\rho.
$$
Therefore, throughout $E_{Q_\rho}$, the quantity inside the parentheses in \eqref{eq:GZ-local} is bounded below by
$
\gtrapprox
\mu_1^3\rho^{\frac{12}{13}}.
$
After taking the $\frac{13}{27}$-power, the integrand is bounded below by
$\mu_1^{\frac{13}{9}}\rho^{\frac{4}{9}}$.
Consequently,
$$
|E_{Q_\rho}|
\mu_1^{\frac{13}{9}}
\rho^{\frac{4}{9}}
\lessapprox
\delta^{-\frac{1}{3}}
(\delta^3N_{Q_\rho})^{\frac{4}{3}}.
$$
Insert \eqref{eq:EQ-lower}:
$$
\delta^4
\frac{\mu_\rho M}{\mu_1}
\mu_1^{\frac{13}{9}}
\rho^{\frac{4}{9}}
\lessapprox
\delta^{-\frac{1}{3}}
(\delta^3N_{Q_\rho})^{\frac{4}{3}}.
$$
Since the $\delta^4$-factors cancel, raising to the $\frac{9}{4}$-power yields
$$
\mu_1
\lessapprox
\delta^{-\frac{3}{4}}
\rho^{-1}
N_{Q_\rho}^3
(\mu_\rho M)^{-\frac{9}{4}}.
$$
Using $N_{Q_\rho}\lessapprox
\mu_\rho n,
$
$$
\mu_1
\lessapprox
\delta^{-\frac{3}{4}}
\rho^{-1}
\mu_\rho^{\frac{3}{4}}
n^3M^{-\frac{9}{4}}.
$$
Therefore, by \eqref{eq:n-X} and \eqref{eq:M-expression}:
$$
n^3M^{-\frac{9}{4}}
\lesssim
n^{\frac{3}{4}}
\left(
\frac{\rho}{\delta}
\right)^{-\frac{9}{4}}
q^{\frac{9}{4}}\approx \left(\left(
\frac{\rho}{\delta}
\right)^{\frac{9}{4}}
\left(
\frac{X_1}{X_\rho}
\right)^{\frac{3}{4}}\right)\left(
\frac{\rho}{\delta}
\right)^{-\frac{9}{4}}
q^{\frac{9}{4}}=\left(
\frac{X_1}{X_\rho}
\right)^{\frac{3}{4}}q^{\frac{9}{4}}.
$$
Combining everything gives the new local estimate
\begin{equation}
\label{preR}
\mu_1
\lessapprox
q^{\frac{9}{4}}
\delta^{-\frac{3}{4}}
\rho^{-1}
\mu_\rho^{\frac{3}{4}}
\left(
\frac{X_1}{X_\rho}
\right)^{\frac{3}{4}}.
\end{equation}
Applying the first part of Lemma~\ref{hairbrushest} to the $A$-parallel coarse family
$(\mathbb T_\rho,Y_\rho)$ gives
\begin{equation}
\label{coarse}
\mu_\rho
\lessapprox C_3^{\frac{C}{\varepsilon_1}}
A^{\frac{2}{3}}
\lambda_\rho^{-\frac{3}{4}}
\rho^{-1}
X_\rho^{\frac{1}{3}}.
\end{equation}
Substituting it into \eqref{preR} and using property $(vii)$ of Lemma~\ref{KZconstruction},
$$
\mu_1
\lessapprox C_1^{\frac{C}{\varepsilon_1}}
q^{\frac{9}{4}}
A^{\frac{1}{2}}
\lambda_\rho^{-\frac{9}{16}}
\delta^{-\frac{3}{4}}
\rho^{-\frac{7}{4}}
X_1^{\frac{3}{4}}
X_\rho^{-\frac{1}{2}}.
$$
Since $\mathbb T_1$ is $1$-parallel, each $\rho$-direction class contains at most $O((\frac{\rho}{\delta})^3)$ fine tubes, while property $(iii)$ of Lemma~\ref{KZconstruction} gives $\#\mathbb T_\rho\approx A\#\Omega$. Hence, $\#\mathbb T_1\lesssim(\frac{\rho}{\delta})^3\#\Omega$ and the following relation holds:
\begin{equation}
\label{Arelation}
\frac{X_1}{X_\rho}
\lesssim
A^{-1}.
\end{equation}
Recall that
$
\lambda_\rho
\approx
q\lambda_1.
$
Thus, we obtain
\begin{equation}\label{eq:L}
\mu_1
\lessapprox C_1^{\frac{C}{\varepsilon_1}}
q^{\frac{27}{16}}
\lambda_1^{-\frac{9}{16}}
\delta^{-\frac{3}{4}}
\rho^{-\frac{7}{4}}
X_1^{\frac{1}{4}}=: R.
\end{equation}

\subsubsection{The Planebrush estimate \texorpdfstring{$P$}{P}}

Next, we retain the fine and coarse densities separately in the calculation of \cite{BCCLXZ} leading to their multiplicity estimate. Fix a coarse tube $T_\rho$. After rescaling $T_\rho$ to unit scale, its fine descendants form a family of $\frac{\delta}{\rho}$-tubes. Moreover, the inverse map of the anisotropic rescaling is uniformly Lipschitz, so the two-ends inequality expressed in terms of volumes passes to the induced rescaled shadings, with the appropriate change in the two-ends constant. By property $(ix)$ of Lemma~\ref{KZconstruction}, we get that the multiplicity of this rescaled family is a fixed number $\widetilde\mu$. Before the final density replacement, we look at the geometric planebrush estimate in Equation $(26)$ of \cite{BCCLXZ}. Applying Proposition $34$ of Katz–Zahl to the coarse family $(\mathbb T_\rho,Y_\rho)$ and using properties $(iii)$, $(vii)$, and $(x)$ of Lemma $\ref{KZconstruction}$, we obtain the similar presented result but with an explicit $C_1^{\frac{C}{\varepsilon_1}}$ dependence:
\begin{equation}\label{eq:P-coarse}
\mu_\rho
\lessapprox C_3^{\frac{C}{\varepsilon_1}}
\lambda_\rho^{-\frac{1}{3}}
\rho^{-\frac{2}{3}}
A^{\frac{1}{3}}
D^{\frac{4}{3}}\lessapprox C_1^{\frac{C}{\varepsilon_1}}
\lambda_\rho^{-\frac{1}{3}}
\rho^{-\frac{2}{3}}
A^{\frac{1}{3}}
D^{\frac{4}{3}}.
\end{equation}
Interpolating \eqref{eq:P-coarse} and \eqref{coarse} with respective exponents $\frac{3}{4}$ and $\frac{1}{4}$ yields
$$\mu_\rho\lessapprox C_1^{\frac{C}{\varepsilon_1}}\lambda_\rho^{-\frac{7}{16}}\rho^{-\frac{3}{4}}A^{\frac{5}{12}}X_\rho^{\frac{1}{12}}D.$$
After applying property $(xi)$ of Lemma~\ref{KZconstruction}, one has:
\begin{equation}
\label{mu1andtilde}
\mu_1\lessapprox C_1^{\frac{C}{\varepsilon_1}}\lambda_\rho^{-\frac{7}{16}}\rho^{-\frac{3}{4}}A^{\frac{5}{12}}X_\rho^{\frac{1}{12}} \widetilde{\mu}.
\end{equation}
We may work with cubical shadings at the rescaled scale $\frac{\delta}{\rho}$. Indeed, the anisotropic
rescaling maps the original $\delta$-cubes to
bricks of dimensions comparable to
$\delta\times \frac{\delta}{\rho}\times \frac{\delta}{\rho}\times \frac{\delta}{\rho}$.
Denote the resulting rescaled family by $(\widetilde{\mathbb T},\widetilde Y)$, with shading density $\widetilde\lambda$ and
multiplicity $\widetilde\mu$. Assigning each brick to an $\frac{\delta}{\rho}$-cube containing a fixed proportion of its volume, we dyadically pigeonhole the occupied proportions
and fill the retained cubes. We then retain the tubes whose resulting shading has at least half the average volume, preserving
at least half of the remaining incidence. Writing $K:=1+\left\lceil\log_2(\frac{1}{\rho})\right\rceil$, we obtain a subfamily $(\mathbb T_*,Y_*)$ with full-$\frac{\delta}{\rho}$-cube
shadings satisfying
\begin{equation}
\label{anisotropic}
\widetilde\mu\lesssim K\mu_*,
\qquad
\lambda_*\gtrsim K^{-1}\widetilde\lambda,
\qquad
C_{1}^* \lesssim \max\left\{
1,C_1\frac{\lambda_Y}
{\widetilde{\lambda}}
\right\}K\lessapprox KC_1,
\end{equation}
without increasing the number of tubes. The two-ends bound follows by comparing each filled cube with its original occupied portion, allowing an absolute enlargement
of the tubes. We use this reduction whenever applying
an estimate after anisotropic rescaling. Applying the two-ends hairbrush estimate at scale $\frac{\delta}{\rho}$ gives
\begin{equation}\label{eq:mu-tilde}
\widetilde\mu
\lessapprox C_1^{\frac{C}{\varepsilon_1}}
\lambda_1^{-\frac{3}{4}}
\left(
\frac{\delta}{\rho}
\right)^{-1}
\left(
\frac{X_1}{X_\rho}
\right)^{\frac{1}{3}}.
\end{equation}
By setting $\lambda_1(T_\rho):=
\frac1{\#\mathbb T_1(T_\rho)}
\sum_{T\in\mathbb T_1(T_\rho)}
\frac{|Y_1(T)|}{|T|}$ and using property $(iv)$ of Lemma~\ref{KZconstruction}:
$$\lambda_1=\frac{1}{\#\mathbb{T}_1}\sum_{T_\rho\in\mathbb{T}_\rho}\sum_{T\in\mathbb T_1(T_\rho)}
\frac{|Y_1(T)|}{|T|}=\sum_{T_\rho\in\mathbb{T}_\rho}\frac{\#\mathbb{T}_1(T_\rho)}{\#\mathbb{T}_1}\lambda_1(T_\rho).$$
Since $\sum_{T_\rho\in\mathbb{T}_\rho}\frac{\#\mathbb{T}_1(T_\rho)}{\#\mathbb{T}_1}=1$, one can choose a coarse tube satisfying $\lambda_1(T_\rho)\geq\lambda_1$. Anisotropic rescaling preserves the normalized shading density, up to absolute constants, which gives us $\widetilde{\lambda}\gtrapprox\lambda_1(T_\rho)\geq\lambda_1$. In this context, the number of fine descendants inside one fixed $T_\rho$ on the rescaled family satisfies 
\begin{equation}
\label{descendantsrelation}
\left(\frac{\delta}{\rho}\right)^3
\#\mathbb{T}_{1}(T_\rho)
\approx
\frac{
\delta^3\#\mathbb T_1
}{
\rho^3\#\mathbb T_\rho
}=\frac{X_1}{X_\rho}.
\end{equation}
Substituting \eqref{eq:mu-tilde} into \eqref{mu1andtilde} and using \eqref{Arelation},
$$
\mu_1
\lessapprox C_1^{\frac{C}{\varepsilon_1}}
\lambda_\rho^{-\frac{7}{16}}
\lambda_1^{-\frac{3}{4}}
\rho^{-\frac{3}{4}}
\left(
\frac{\delta}{\rho}
\right)^{-1}
A^{\frac{5}{12}}
X_\rho^{\frac{1}{12}}
\left(
\frac{X_1}{X_\rho}
\right)^{\frac{1}{3}}=  C_1^{\frac{C}{\varepsilon_1}}\lambda_\rho^{-\frac{7}{16}}
\lambda_1^{-\frac{3}{4}}
\rho^{\frac{1}{4}}
\delta^{-1}
X_1^{-\frac{1}{12}}
X_\rho^{\frac{1}{6}}.
$$
Using once again 
$\lambda_\rho\approx q\lambda_1$, we arrive at
\begin{equation}\label{preliminar}
\mu_1
\lessapprox C_1^{\frac{C}{\varepsilon_1}}
q^{-\frac{7}{16}}
\lambda_1^{-\frac{19}{16}}
\rho^{\frac{1}{4}}
\delta^{-1}
X_1^{-\frac{1}{12}}
X_\rho^{\frac{1}{6}}.
\end{equation}
Separately, one can use property $(ix)$ of Lemma~\ref{KZconstruction} over \eqref{eq:mu-tilde} obtaining
\begin{equation}
\label{preliminar1}
\mu_1\lessapprox C_1^{\frac{C}{\varepsilon_1}}\lambda_1^{-\frac{3}{4}}\delta^{-1}X_1^{\frac{1}{3}}X_\rho^{-\frac{1}{3}}.
\end{equation}
By interpolating \eqref{preliminar} with \eqref{preliminar1} using as exponents, respectively $\frac{2}{3}$ and $\frac{1}{3}$, we are able to remove the $X_\rho$-factor:
\begin{equation}
\label{eq:P}
\mu_1\lessapprox C_1^{\frac{C}{\varepsilon_1}}q^{-\frac{7}{24}}\lambda_1^{-\frac{25}{24}}\delta^{-1}X_1^{\frac{1}{18}}\rho^{\frac{1}{6}}=:P.
\end{equation}
\subsubsection{The Guth--Zahl estimate $G$}
\label{Gestimate}
Next, we use the Guth--Zahl trilinear estimate presented in Theorem~\ref{trilinearKakeya}. For each relevant
$\delta$-cube $Q$, a $\gtrapprox1$ proportion of triples in
$\mathbb T_{Y_1}(Q)^3$ satisfies
$$
\left|
V(T')\wedge V(T'')\wedge V(T''')
\right|
\gtrsim\rho.
$$
Note that since we are applying directly to the refined family $(\mathbb{T}_1,Y_1)$, it is not expected any factor of $M$ to appear, differently from the derivation done for the $R$-estimate. Thus, the Guth--Zahl trilinear inequality implies
$$
|E_1|
\mu_1^{\frac{13}{9}}
\rho^{\frac{4}{9}}
\lessapprox
\delta^{-\frac{1}{3}}
X_1^{\frac{4}{3}}.
$$
Using \eqref{vera} and raising the whole estimate to the $\frac{9}{4}$-power gives
\begin{equation}
\mu_1
\lessapprox
\lambda_1^{-\frac{9}{4}}
\rho^{-1}
\delta^{-\frac{3}{4}}
X_1^{\frac{3}{4}}=: G.
\label{eq:G-final}
\end{equation}
Note that there is no additional dependence on the  two-ends constant $C_1$. This does not result in an obstruction to the interpolation, it only means that the constant $C$ is not affected by the weight given to $G$-estimate.
\subsubsection{Interpolation}
Now, notice that:
$$\mu_1=\mu_1^{\frac{2}{267}}\mu_1^{\frac{81}{89}}\mu_1^{\frac{22}{267}}\lessapprox G^{\frac{2}{267}}P^{\frac{81}{89}}R^{\frac{22}{267}}.$$
Calculating all the resulting exponents, we obtain
$$
\mu_1
\lessapprox C_1^{\frac{C}{\varepsilon_1}}
q^{-\frac{45}{356}}
\lambda_1^{-\frac{90}{89}}
\delta^{-\frac{87}{89}}
X_1^{\frac{41}{534}}.
$$
By construction $\lambda_1\lessapprox \lambda_{\rho}$, so we have
$q^{-\frac{45}{356}}
\lessapprox
1.$ We conclude that
\begin{equation}\label{eq:mu1-final}
\mu_1
\lessapprox C_1^{\frac{C}{\varepsilon_1}}
\lambda_1^{-\frac{90}{89}}
\delta^{-\frac{87}{89}}
X_1^{\frac{41}{534}}.
\end{equation}
Uniform fine multiplicity gives
\begin{equation}
\label{vera}
\mu_1|E_1|\approx \sum_{T\in\mathbb{T}_1}|Y_1(T)|\sim \lambda_1 X_1\implies |E_1|
\approx
\frac{\lambda_1X_1}{\mu_1},
\text{ with }
E_1
:=
\bigcup_{T\in\mathbb T_1}Y_1(T).
\end{equation}
Using \eqref{eq:mu1-final},
$$
|E_1|
\gtrapprox C_1^{-\frac{C}{\varepsilon_1}}
\lambda_1^{\frac{179}{89}}
\delta^{\frac{87}{89}}
X_1^{\frac{493}{534}}.
$$
The fine refinement retains a logarithmic/subpower fraction of the original shading:
\begin{equation}
\label{refinementdensity}
\lambda_1X_1\sim \sum_{T\in\mathbb{T}_1}|Y_1(T)|\gtrapprox \sum_{T\in \mathbb{T}}|Y(T)|
\sim
\lambda X,
\qquad
X:=\delta^3\#\mathbb T.
\end{equation}
Recall that it also satisfies
$\lambda_1\gtrapprox\lambda$. Consequently, by monotonicity:
$$
\left|
\bigcup_{T\in\mathbb T}Y(T)
\right|
\gtrapprox
C_1^{-\frac{C}{\varepsilon_1}}\lambda^{\frac{179}{89}}
\delta^{\frac{87}{89}}
(\delta^3\#\mathbb T)^{\frac{493}{534}},
$$
for some constant $C>0$. This proves that
$\mathbf{RT}
\left(
\frac{269}{89},
\frac{179}{89},
\frac{493}{534}
\right)$ holds. We are in the conditions of Proposition~\ref{RTtoTE} with $d=\frac{269}{89}$ and $b=\frac{493}{534}.$ Hence, we end up obtaining that the estimate
$\mathbf{TE}
\left(
\frac{269}{89},
\frac{179}{89},
\frac{493}{534}
\right)$ is valid.
\end{proof}
\begin{remark}
The step passing from $C_1=100$ to an arbitrary $C_1$ is crucial in the next subsection. We will ignore this issue for now and return to it in Section~\ref{bootstrapsection}, proving explicitly the missing argument in Lemma~\ref{lem:quantitative-TE}.
\end{remark}

\subsection{Restriction consequence of the improved two-ends estimate}
\label{subsec:restriction-consequence}

We briefly explain how the incidence estimate
$\mathbf{TE}
\left(
\frac{269}{89},
\frac{179}{89},
\frac{493}{534}
\right)$
improves the restriction exponent through the decoupling-incidence
argument of \cite{BCCLXZ}. 

The incidence estimate established above is formulated using the stronger Katz--Zahl two-ends condition. However, observe that it nevertheless supplies the incidence input required in the two-ends scenario of the decoupling-incidence argument of Borges et al. In their argument, after the two-ends reduction, the relevant shading is $(\eta^{2},\eta^{4}, c)$-two-ends in the sense of Wang--Wu, for an arbitrarily small parameter $\eta>0$ and some positive constant $c$ (see Section $3.5.$ of the same reference). A $\lambda$-dense $(\eta^{2},\eta^{4},c)$-two-ends shading satisfies the Katz--Zahl $(\eta^{4},C_{1})$-two-ends condition with  $C_{1}\lesssim\delta^{-\eta^{6}}$. First note that, due to the $\lambda$-denseness property, the assumed condition can be read as follows:
$$\#\{Q:Q\subset Y(T)\cap J\}\leq c\delta^{\eta^4}\frac{|Y(T)|}{\delta^4}\sim \delta^{\eta^4}\lambda\delta^{-1}.$$
Indeed, if $\delta\le r\le\delta^{\eta^{2}}$, a ball $B(x,r)$ intersects the tube inside $O(1)$ many $\delta\times \delta^{\eta^2}$-subtubes $J$. In this case, the weak two-ends condition gives
\[
\#\{Q:Q\subset Y(T)\cap B(x,r)\}
\lesssim
\delta^{\eta^{4}}\lambda\delta^{-1}
\le
r^{\eta^{4}}\lambda\delta^{-1}.
\]
If $\delta^{\eta^{2}}\le r\le1$, the trivial bound and
$r^{\eta^{4}}\ge\delta^{\eta^{6}}$
give
\[
\#\{Q:Q\subset Y(T)\cap B(x,r)\}
\leq \#\{Q:Q\subset Y(T)\}\sim \lambda\delta^{-1}\leq
\delta^{-\eta^{6}}r^{\eta^{4}}\lambda\delta^{-1}.
\]
We may therefore apply Theorem~\ref{thm:improved-two-ends} with two-ends parameters satisfying $\varepsilon_{1}=\eta^{4}$ and $C_{1}\lesssim\delta^{-\eta^{6}}$.
The corresponding dependence on the two-ends constant contributes only $C_{1}^{-\frac{C}{\eta^{4}}} \gtrsim \delta^{C\eta^{2}}$. Choosing $\eta>0$ sufficiently small relative to the prescribed $\varepsilon>0$, this loss is absorbed into the $\delta^{\varepsilon}$-loss. This gives the incidence estimate in the sense of Wang--Wu with the same parameters $d,a$ and $b$, and thus the decoupling methods of \cite{BCCLXZ} are allowed to be used. Note that, by setting $d=\frac{269}{89}$ and $b=\frac{493}{534}$, the incidence estimate corresponds to $\mathbf{TE}
\left(d
,\frac{d+1}{2},
b
\right).$ The remark done in Definition $1.2$ of the same reference already concludes our main goal. We guide the reader to their detailed explanation. A better illustration of this black-boxed argument will be done when studying the off-diagonal case in Section~\ref{offdiagonalsharpness}. After applying Tao's $\varepsilon$-removal argument in \cite{Tao1999BochnerRiesz}, we obtain that the localized restriction estimate obtained by the previous method implies Theorem~\ref{thm:main-restriction}.
\subsection{Sharpness of the interpolation technique}
\label{diagsharpness}
We now prove that one cannot expect to increase the range without exploring further the geometry of the construction. Note that the question on Appendix A of \cite{KZ} is not solved by this argument. That is, interpolation does not guarantee that with the same geometry other better estimates cannot be achieved.
\begin{theorem}
The interpolation technique over the estimates $G,P$ and $R$ cannot establish
$\mathbf{TE}\left(d,\frac{d+1}{2},b\right),$ for any
$d>\frac{269}{89}$. Consequently, through the incidence-decoupling implication, a restriction estimate \eqref{restriction} below $p>\frac{1250}{447}$ cannot be proved.
\end{theorem}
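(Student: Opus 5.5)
The plan is to recast ``the interpolation technique over $G,P,R$'' as a finite-dimensional linear program and show that its optimum is exactly $d=\frac{269}{89}$. A general interpolation is a bound
\[
\mu_1\lessapprox G^{\alpha}P^{\beta}R^{\gamma},\qquad \alpha,\beta,\gamma\ge0,\quad \alpha+\beta+\gamma=1,
\]
with $G,P,R$ as in \eqref{eq:G-final}, \eqref{eq:P}, \eqref{eq:L}; the factors $C_1^{\frac{C}{\varepsilon_1}}$ are irrelevant here. Collecting exponents gives a bound of the form
\[
\mu_1\lessapprox q^{e_q}\rho^{e_\rho}\lambda_1^{-s}\delta^{-t}X_1^{u},
\]
where $e_q,e_\rho,s,t,u$ are affine in $(\alpha,\beta,\gamma)$. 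For instance,
\[
e_q=-\tfrac{7}{24}\beta+\tfrac{27}{16}\gamma,\qquad e_\rho=-\alpha+\tfrac16\beta-\tfrac74\gamma,
\]
and $s,t,u$ are obtained the same way.

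First I will record the admissibility constraints forced by the free parameters, which the construction of Lemma~\ref{KZconstruction} does not control.
\begin{itemize}
\item Since $q=\frac{\lambda_\rho}{\lambda_1}\gtrapprox1$ can be arbitrarily large, a usable bound needs $e_q\le0$.
\item Since $\rho\in[\delta,1]$ is arbitrary, the worst case must be taken. If $e_\rho\ge0$, then $\rho^{e_\rho}\le1$ and $\rho$ drops out. If $e_\rho<0$, the worst case is $\rho=\delta$, which replaces $t$ by $t-e_\rho$ and only worsens the $\delta$-exponent.
\end{itemize}
Passing through \eqref{vera} and \eqref{refinementdensity}, the output is $\mathbf{TE}(d,a,b)$ with
\[
d=4-t_{\mathrm{eff}},\qquad a=1+s,\qquad b=1-u.
\]
Since $\lambda\le1$, establishing $\mathbf{TE}(d,\frac{d+1}{2},b)$ requires $a\le\frac{d+1}{2}$. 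I will also impose $\frac{d-1}{3}\le b\le1$, as needed to invoke Proposition~\ref{RTtoTE} (and by the decoupling step, as in the $b$-dependent discussion later). So the question becomes: maximize $d$ over the simplex subject to these linear constraints, in each of the two regimes $e_\rho\ge0$ and $e_\rho<0$.

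Second, I will solve this linear program. In the regime $e_\rho\ge0$, I expect the binding constraints at the optimum to be $e_\rho=0$ and $a=\frac{d+1}{2}$, together with $\alpha+\beta+\gamma=1$. This $3\times3$ system should return exactly $(\alpha,\beta,\gamma)=(\frac{2}{267},\frac{81}{89},\frac{22}{267})$ and $d=\frac{269}{89}$. Optimality will be certified by an explicit dual certificate: nonnegative multipliers for $e_q\le0$, $e_\rho\ge0$, $a\le\frac{d+1}{2}$ and the simplex constraints, whose combination gives $d\le\frac{269}{89}$ identically. Equivalently, I will check the finitely many vertices of the feasible polytope.

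The regime $e_\rho<0$ is handled by noting that substituting $\rho=\delta$ yields a new LP. I will show it is dominated, either by moving along the feasible direction that increases $e_\rho$, or by checking its vertices directly.

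Third, for the restriction consequence, I will use that the decoupling--incidence implication of \cite{BCCLXZ} (Definition~1.2 there) produces, from $\mathbf{TE}(d,\frac{d+1}{2},b)$, a restriction exponent $p(d)$ that is strictly decreasing in $d$. Then $d\le\frac{269}{89}$ forces $p\ge p(\frac{269}{89})=2+\frac{356}{447}=\frac{1250}{447}$.

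I expect the main obstacle to be formalizing precisely what the interpolation technique is allowed to do, so that the LP above is genuinely exhaustive. This means justifying that one cannot exploit correlations between $q$, $\rho$, $X_1$ and $\lambda_1$. I will handle this by exhibiting parameter values consistent with all stated constraints (for example $X_1\le1$, $\lambda_1\le1$, $q\gtrsim1$, $\rho\in[\delta,1]$) at which each estimate is attained, so that no better combination can follow formally. After that, the remaining work is the careful but routine exponent arithmetic for the dual certificate.
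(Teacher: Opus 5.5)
Your plan is essentially the paper's proof: you take a general interpolation $G^wP^uR^z$, eliminate $\rho$ in the two regimes $e_\rho\ge0$ and $e_\rho<0$, and arrive at the optimum $(w,u,z)=(\frac{2}{267},\frac{81}{89},\frac{22}{267})$, giving $d=\frac{269}{89}$ and $p(\frac{269}{89})=\frac{1250}{447}$; the paper's supporting inequality $46x+3y\ge48$, combined with $2y+x\le3$, is exactly the dual certificate you describe. One correction concerns your extra constraints: $q$ is not arbitrarily large, since $1\lessapprox q\lessapprox\lambda_1^{-1}$, so interpolations with $e_q>0$ remain usable at the cost of an extra $\lambda_1^{-e_q}$ (the paper's $\max\{0,\alpha\}$), and $b\ge\frac{d-1}{3}$ can always be reached by weakening $b$ because $X_1\lesssim1$; hence neither $e_q\le0$ nor $b\ge\frac{d-1}{3}$ is a genuine constraint, your LP is exhaustive only because both are slack at the optimum ($e_q=-\frac{45}{356}$, $b=\frac{493}{534}>\frac{60}{89}$), and your certificate must therefore put zero weight on them, which complementary slackness guarantees.
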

\begin{proof}
Consider an arbitrary geometric interpolation
$$
I=G^wP^uR^z,
\qquad
u,w,z\ge0,
\qquad
u+w+z=1.
$$
Before eliminating the coarse scale $\rho$, write the resulting estimate in the form
\begin{equation}
\mu_1
\lessapprox C_1^{\frac{C}{\varepsilon_1}}
q^\alpha
\lambda_1^{-y}
\delta^{-\kappa}
\rho^\beta
X_1^\gamma.
\label{eq:general-interpolated-form}
\end{equation}
A direct calculation gives
\begin{align*}
y
&=
\frac{9}{4}w
+
\frac{25}{24}u
+
\frac{9}{16} z=\frac{9}{16}+\frac{23}{48}u+\frac{27}{16}w,
\\
\alpha
&=
-\frac7{24}u+\frac{27}{16}z,
\\
\beta
&=
-w+\frac{u}{6}-\frac74z
=-\frac{7}{4}+\frac{23}{12}u+\frac{3}{4}w,
\\
\gamma
&=
\frac{3}{4}w+\frac{u}{18}+\frac{z}{4}=\frac{1}{4}-\frac{7}{36}u+\frac{w}{2},
\\
\kappa
&=
\frac{3}{4}w+u+\frac34z=\frac{3}{4}+\frac{u}{4}.
\end{align*}

The crucial observation is that the $\rho$-factor may be eliminated
uniformly using only the fact that $\delta\le\rho\le1$.
This leads to the following inequality of exponents.

\begin{lemma}[The $G,P,R$ supporting inequality]
\label{lem:PLH-supporting-inequality}
After eliminating the $\rho$-factor in
\eqref{eq:general-interpolated-form} using the condition
$\delta\le\rho\le1$, write the resulting bound as
\[
\mu_1
\lessapprox C_1^{\frac{C}{\varepsilon_1}}
q^\alpha
\lambda_1^{-y}
\delta^{-x}
X_1^\gamma.
\]
Then,
$46x+3y\ge48$.
Equality holds if and only if
$u=\frac{21-9w}{23}$,
which only happens if the interpolated $\rho$-power vanishes.
\end{lemma}

\begin{proof}
There are two possible cases. Suppose first that $\beta\ge0$, i.e. $u\ge\frac{21-9w}{23}$.
Since $\rho\le1$,
$\rho^\beta\le1$, the favorable $\rho$-factor can be discarded and one can take $x=\kappa$. Using the previous derivations,
$$
46x+3y-48=
46\kappa+3y-48=
\frac{207}{16}\left(u-\frac{21-9w}{23}\right)\ge0.
$$
Suppose instead that
$\beta<0$, i.e.
$u<\frac{21-9w}{23}$. Since $\rho\ge\delta$ and $\beta<0$,
$\rho^\beta\le\delta^\beta$.
Hence, $x=\kappa-\beta$.
Therefore,
$$
46x+3y-48=
46(\kappa-\beta)+3y-48=
-\frac{3611}{48}\left(u-\frac{21-9w}{23}\right)>0.
$$
Thus, in all cases,
$46x+3y\ge48$.
Moreover, equality in either case is possible precisely when
$u-\frac{21-9w}{23}=0$,
which is equivalent to
$\beta=0$.
\end{proof}
We get that the interpolation ultimately gives
$$
\mu_1
\lessapprox C_1^{\frac{C}{\varepsilon_1}}
\lambda_1^{-y-\max\{0,\alpha\}}
\delta^{-x}
X_1^\gamma,
$$
after discarding any favorable auxiliary factors (note that we used the fact that $\lambda_1q\lessapprox 1$ in the case $\alpha>0$). Using \eqref{vera}, one obtains
\begin{equation}
|E_1|
\gtrapprox C_1^{-\frac{C}{\varepsilon_1}}
\lambda_1^{1+y+\max\{0,\alpha\}}
\delta^x
X_1^{1-\gamma}.
\label{eq:final-general-volume}
\end{equation}
Set $d:=4-x$. In order for \eqref{eq:final-general-volume} to imply a
restriction-compatible estimate of the form $\mathbf{TE}
\left(
d,\frac{d+1}{2},1-\gamma
\right)$, we must have
$1+y+\max\{0,\alpha\}
\le
\frac{d+1}{2}$. Since $d=4-x$, this implies that $2y+x\le3$. Combining everything, we get by using Lemma~\ref{lem:PLH-supporting-inequality}:
\begin{equation}
\label{vera1}
48\le 46x+3y \le
46x+\frac{3}{2}\left(3-x\right)=
\frac{89x+9}{2}.
\end{equation}
Consequently,
$$
x\ge\frac{87}{89}\quad \implies \quad d=4-x
\le
\frac{269}{89}.
$$
We now show that the barrier is attained: equality in the first inequality of \eqref{vera1} requires $u=\frac{21-9w}{23}$. Since $u+w+z=1$, $z=\frac{2-14w}{23}$. Along this line of interpolation,
\begin{equation}
x=\frac{90-9w}{92}, \qquad
y=1+\frac{3}{2}w.
\end{equation}
The restriction-compatibility condition
$2y+x\le3$
therefore becomes $w\le\frac{2}{267}$. Since $x$ decreases with the increasing of $w$, maximizing the dimension $d=4-x$ is equivalent to
maximizing $w$. Hence, the optimal choice is
\begin{equation}
\label{vera4}
w=\frac{2}{267},
\qquad
u=\frac{81}{89},
\qquad
z=\frac{22}{267}.
\end{equation}
\end{proof}
\begin{remark} Since the optimal exponents in \eqref{vera4} were the ones used to get the restriction improvement in Theorem~\ref{thm:main-restriction}, one shall not expect a better exponent range without extracting new estimates using the same geometry derived by Katz--Zahl.
\end{remark}

\section{Further consequences: Bochner--Riesz conjecture}

In this section, we record the improvement for the Bochner--Riesz problem in $\mathbb R^4$. This will follow the same reasoning as Remark~$2.1$ of \cite{BCCLXZ}. By the classical reduction from the Bochner--Riesz problem to
Carleson--Sj\"olin oscillatory integral estimates, it suffices to establish appropriate $L^p$-bounds for operators of the form
\begin{equation}
\label{vera5}
    S_N f(x)
    :=
    \int_{\mathbb{R}^4}
    e^{2\pi i N |x-y|}
    a(x,y) f(y)\,dy,
\end{equation}
where $a \in C_c^\infty(\mathbb{R}^4\times\mathbb{R}^4)$ is supported away from the diagonal $\{x=y\}$. We briefly explain how the decoupling-incidence argument used to obtain Theorem~\ref{thm:main-restriction} also applies to \eqref{vera5}.

\subsection{Reduction to a Carleson--Sj\"olin operator.}
The singular behavior of the Bochner--Riesz multiplier is localized near the unit sphere. After a dyadic decomposition and the usual stationary-phase reduction, the relevant localized pieces are modeled by Carleson--Sj\"olin operators whose phase is the Euclidean distance function $\phi(x,y)=|x-y|$. Thus, by the standard reductions in \cite{CS72,GHI19}, the Bochner--Riesz estimate follows from the uniform estimates for $S_N$.

\subsection{Rescaling to the physical scale $N$.}
Define
\[
    S^N f(x):=S_N f\left(\frac{x}{N}\right).
\]
Then,
\[
    S^N f(x)
    =
    \int_{\mathbb{R}^4}
    e^{2\pi i |x-Ny|}
    a\left(\frac{x}{N},y\right) f(y)\,dy.
\]
This rescaling converts the original high-frequency problem at frequency $N$ into a large-scale wave-packet problem on the spatial scale $N$. At this scale, the relevant wave-packets are localized to tubes of length comparable to $N$ and transverse radius comparable to $N^{\frac{1}{2}}$.

\subsection{Refined decoupling.}
The phase $\phi(x,y)=|x-y|$
is a positive-definite H\"orma-\-nder phase. Refined decoupling estimates for positive-definite H\"ormander operators are also known to hold by the work developed in \cite{ILX22}.
Consequently, the refined decoupling estimate of Theorem~2.2 in \cite{BCCLXZ} applies
to the operator $S^N$. More precisely, after decomposing $f$ into wave-packets $f=\sum_T f_T$,
the refined decoupling inequality controls $S^Nf$ on a collection of
$N^{\frac{1}{2}}$-balls in terms of the individual wave-packet contributions $S^Nf_T$, with an additional gain depending on the maximal number of wave-packet tubes passing through each ball. Thus, the analytic problem
is reduced to controlling the multiplicity of the associated tube
family.

\subsection{Wave-packet geometry and incidence estimates.}
The work in \cite{GWX25} shows that the Wang--Wu decoupling-incidence method extends to positive-definite H\"orma-\-nder
operators by means of the standard $4\times 4$ wave-packet decomposition. For a general variable-coefficient H\"ormander phase, the corresponding
wave-packets tend to follow curved bicharacteristics. In the present case, however, the phase is the Euclidean distance function $|x-y|$. The associated bicharacteristics are Euclidean geodesics, and hence straight
lines. Therefore, the wave-packets of $S^N$ are concentrated on straight tubes, with exactly the same geometric structure as the tube family appearing in the restriction argument of this paper. In particular, the tube family associated to $S^N$ satisfies the same incidence estimate as in Theorem~\ref{thm:improved-two-ends}.

\subsection{Combining refined decoupling with incidence geometry.}
The refined decoupling estimate controls the oscillatory contribution in terms of the tube multiplicity, while Theorem~\ref{thm:improved-two-ends} controls that multiplicity geometrically. Hence, the same decoupling-incidence argument (after further reduction from Wang--Wu) used in the proof of Theorem~\ref{thm:main-restriction} applies to $S^N$. Carrying out the same induction and interpolation arguments therefore gives
\begin{equation}
    \|S^N f\|_{L^p(\mathbb{R}^4)}
    \leq
    C_\varepsilon N^\varepsilon
    \|f\|_{L^p(\mathbb{R}^4)},
\end{equation}
for every $p\geq 2+ \frac{356}{447}$
and every $\varepsilon>0$, where $C_\varepsilon$ is independent of
$N$.
\begin{remark}
This strategy is the higher-dimensional analog of the one developed in \cite{GWX25}, where the same techniques were used to improve the three-dimensional Bochner--Riesz exponent to $\frac{22}{7}$.
\end{remark}

We therefore obtain the following:
\begin{proposition}[Bochner--Riesz consequence]
\label{prop:BR}
Let $B_\delta$ denote the Bochner--Riesz operator in $\mathbb R^4$. Then, $B_\delta:L^p(\mathbb R^4)\to L^p(\mathbb R^4)$
is bounded whenever
$\max\{p,p'\}\geq
2+ \frac{356}{447}$ and
$
\delta>
4\left|
\frac12-\frac1p
\right|
-\frac12.$
\end{proposition}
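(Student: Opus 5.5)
The proof of Proposition~\ref{prop:BR} is a chain of standard reductions, and the new input is the uniform bound for $S^N$ established above. The first step is duality. $B_\delta$ is self-adjoint up to conjugation, so $\|B_\delta\|_{L^p\to L^p}=\|B_\delta\|_{L^{p'}\to L^{p'}}$, and it suffices to treat $p\geq 2+\frac{356}{447}$ with $\delta>4(\frac12-\frac1p)-\frac12=\frac32-\frac4p$. For such $p$ this threshold is positive, so the maximum in the conjecture is attained by this expression.

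Next we decompose the multiplier. Write $m_\delta(\xi)=\sum_{j\ge0}2^{-j\delta}\phi_j(\xi)$, where $\phi_j$ is supported where $1-|\xi|\sim 2^{-j}$, plus a smooth compactly supported piece that is trivially bounded. The claim to prove is
\[
\|\phi_j(D)\|_{L^p\to L^p}\lesssim_\varepsilon 2^{j(\frac32-\frac4p+\varepsilon)}.
\]
Summing the geometric series then gives boundedness for every $\delta>\frac32-\frac4p$. To prove the claim, we follow the classical Carleson--Sj\"olin/Stein reduction used in \cite{CS72,GHI19}. The kernel $K_j=\check\phi_j$ is concentrated in $|x|\lesssim 2^{j(1+\varepsilon)}$, with Schwartz tails. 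Decompose $K_j$ dyadically in $|x|\sim 2^k$, $k\le j$, and apply stationary phase. Each piece, after rescaling $x\mapsto 2^k x$, becomes $2^{-k\frac{3}{2}}$ times an operator $S_N$ of the form \eqref{vera5} with $N\sim 2^k$, with an amplitude supported off the diagonal and localized spatially. Localization to unit balls (Stein's standard argument) lets us apply the uniform bound $\|S^Nf\|_{L^p}\lesssim_\varepsilon N^\varepsilon\|f\|_{L^p}$.

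Rescaling back converts that bound into
\[
\|S_N\|_{L^p\to L^p}\lesssim N^{-\frac4p+\varepsilon}.
\]
Combined with the prefactor $2^{k\cdot\frac{3}{2}}$ coming from the kernel normalization in the $L^p$ scaling, this gives a contribution of $2^{k(\frac32-\frac4p+\varepsilon)}$. Summing over $k\le j$ yields the claim.

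The substantive step is that the uniform $S^N$ estimate holds for all $p\ge 2+\frac{356}{447}$. This was argued above: the refined decoupling for positive-definite H\"ormander phases \cite{ILX22} applies, and the wave-packets of the Euclidean distance phase lie on straight tubes, so Theorem~\ref{thm:improved-two-ends} applies verbatim. Feeding the resulting $\mathbf{TE}(d,\frac{d+1}{2},b)$ with $d=\frac{269}{89}$ into the decoupling-incidence induction of \cite{BCCLXZ,GWX25}, followed by $\varepsilon$-removal \cite{Tao1999BochnerRiesz}, gives the claimed range.

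The main obstacle is the bookkeeping in the Carleson--Sj\"olin reduction at the endpoint $p=2+\frac{356}{447}$. One must check that the $N^\varepsilon$ losses are absorbed by the strict inequality in $\delta$, and that the reduction does not require an estimate strictly beyond the obtained range. Since only the strict condition on $\delta$ is claimed, the $\varepsilon$-losses are harmless, and the endpoint in $p$ is covered because the $S^N$ bound was stated for $p\ge 2+\frac{356}{447}$.
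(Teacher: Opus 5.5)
Your proposal is correct and follows the paper's route. You reduce, via the classical Carleson--Sj\"olin argument \cite{CS72,GHI19}, to the uniform bound $\|S^Nf\|_{L^p}\lesssim_\varepsilon N^\varepsilon\|f\|_{L^p}$ for $p\ge 2+\frac{356}{447}$, which comes from refined decoupling \cite{ILX22} and Theorem~\ref{thm:improved-two-ends} applied to the straight wave-packet tubes of $|x-y|$; you also spell out the duality and the dyadic multiplier/kernel decomposition that the paper leaves implicit.

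Two small fixes are needed. First, the piece of $K_j$ on $|x|\sim 2^k$ has size $2^{-j}2^{-\frac{3k}{2}}$, so together with the Jacobian $2^{4k}$ its operator norm is $\lesssim 2^{-j}2^{k(\frac52-\frac4p+\varepsilon)}$. Your stated factors $2^{\mp\frac{3k}{2}}$ do not yield your per-piece bound, although the corrected bound still sums over $k\le j$ to $2^{j(\frac32-\frac4p+\varepsilon)}$. Second, do not invoke $\varepsilon$-removal: it only gives an open range in $p$, whereas the endpoint $p=2+\frac{356}{447}$ is covered precisely because the $N^\varepsilon$-loss is absorbed by the strict inequality on $\delta$.
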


\section{Proof of Corollary~\ref{offdiagresult} and Sharpness}
\subsection{The $b$-dependent restriction estimate}
\label{offdiagonalsharpness}
We start by proving the most important result of this section. This boils down to making explicit the observation done in Remark~$1.6$ of \cite{GWX25}.
\begin{proposition}
\label{b-dependentdec}
Suppose that the assumption $\mathbf{TE}\left(d,\frac{d+1}{2},b\right)$ holds for the tube families arising from the Wang--Wu reduction and $0\leq b\leq 1$. Then, with $p_d:=\frac{2(d+4)}{d+2},$ one has
\begin{equation}
\label{offdiagestimate}
\|E_Sf\|_{L^{p_d}(B(0,R))}
\lesssim_\varepsilon
R^\varepsilon
\|f\|_{L^2(B^3(0,1))}^{1-\frac{2b}{d+4}}
\left(
\sup_\theta\|f_\theta\|_{L^2_{\mathrm{avg}}}
\right)^{\frac{2b}{d+4}}.
\end{equation}
If, in addition, $b=\frac{d-1}{3}$ is satisfied, then the corresponding local $L^q\to L^p$ estimates hold throughout $p\ge p_d$ and $q'\le \frac35p.$
\end{proposition}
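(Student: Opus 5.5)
The plan is to follow the Wang--Wu decoupling-incidence scheme as adapted in \cite{BCCLXZ} and \cite{GWX25}, keeping the exponent $b$ of $\delta^3\#\mathbb T$ explicit throughout rather than specializing it. Fix $R\ge1$, set $\delta=R^{-1/2}$, and decompose $f=\sum_T f_T$ into wave-packets adapted to caps $\theta$ of radius $R^{-1/2}$. Each $E_Sf_T$ is concentrated on an $R^{1/2}\times R$ tube $T$. I would then run the Wang--Wu induction on scales with the broad/narrow and two-ends reductions of \cite{WW24}, as in Section~3.5 of \cite{BCCLXZ}. After dyadic pigeonholing, this reduces matters to a family $\mathbb T$ with the following properties:
\begin{itemize}
\item[(1)] the coefficients $\|f_T\|_2$ are comparable;
\item[(2)] each tube carries a $\lambda$-dense, two-ends shading by $R^{1/2}$-balls;
\item[(3)] each shaded ball meets $\sim\mu$ tubes.
\end{itemize}
As explained in Subsection~\ref{subsec:restriction-consequence}, the Wang--Wu two-ends condition upgrades to the Katz--Zahl one at a $\delta^{-\eta^6}$ cost. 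So, after rescaling by $R$, the hypothesis $\mathbf{TE}\left(d,\frac{d+1}{2},b\right)$ applies to this family and yields
\[
\mu\lessapprox \lambda^{-\frac{d-1}{2}}\,\delta^{d-4}\,(\delta^3\#\mathbb T)^{1-b}.
\]

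Next I would apply the refined decoupling inequality (Theorem~2.2 of \cite{BCCLXZ}) on the union of shaded $R^{1/2}$-balls. It gives
\[
\|E_Sf\|_{L^{p}}^{p}\lessapprox \mu^{\frac{p}{2}-1}\sum_T\|E_Sf_T\|_{L^{p}(Y(T))}^{p}.
\]
Each summand is controlled by $\lambda R\cdot R^{3/2}\cdot(R^{-3/2}\|f_T\|_2^2)^{p/2}$, up to the usual normalization. Substituting the incidence bound for $\mu$, I would choose $p=p_d=\frac{2(d+4)}{d+2}$; the point of this choice is that the powers of $\lambda$ cancel and the net power of $R$ is $\varepsilon$. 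What remains is a power of $\#\mathbb T$ and of the common wave-packet size.

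I would then use orthogonality. First, $\#\mathbb T\,\|f_T\|_2^2\lesssim\|f\|_2^2$. Second, the number of tubes per cap is bounded by $\|f_\theta\|_2^2/\|f_T\|_2^2$, so $\delta^3\#\mathbb T\lesssim \|f\|_2^2/\sup_\theta\|f_\theta\|^2_{L^2_{\mathrm{avg}}}$. Splitting the $\#\mathbb T$-power via these two relations and bookkeeping the exponent $b$ should produce exactly $\|f\|_2^{1-\frac{2b}{d+4}}(\sup_\theta\|f_\theta\|_{L^2_{\mathrm{avg}}})^{\frac{2b}{d+4}}$. The narrow terms are handled by the induction hypothesis combined with lower-dimensional decoupling, as in \cite{WW24}.

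For the off-diagonal statement, I would bound $\|f\|_2\lesssim\|f\|_q$ and $\|f_\theta\|_{L^2_{\mathrm{avg}}}\le\|f_\theta\|_{L^{q}_{\mathrm{avg}}}\le R^{\frac{3}{2q}}\|f\|_q$. With $b=\frac{d-1}{3}$, the resulting power of $R$ has to be absorbed by the gain hidden in \eqref{offdiagestimate}; equivalently, I would perform a parabolic rescaling of the caps to redistribute the $R$-powers. I expect the admissible region to be exactly $q'\le\frac35p$. The range $p>p_d$ would then follow by interpolating with the trivial $L^1\to L^\infty$ bound and using the endpoint at $p_d$.

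The main obstacle I expect is this last exponent bookkeeping: checking that $b=\frac{d-1}{3}$ is precisely what makes the $R$-powers close along $q'=\frac35p$. A secondary risk is confirming that the induction on scales tolerates the $b$-dependent mixed norm on the right-hand side of \eqref{offdiagestimate}, since the induction hypothesis must be stated in the same mixed form.
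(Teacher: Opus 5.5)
\textbf{Gap 1: the decoupling step.} Refined decoupling bounds $\|E_Sf\|_{L^{10/3}(E_\mu)}$ in terms of the \emph{full} wave-packet norms $\|E_Sf_T\|_{L^{10/3}}$. You cannot replace these by $\|E_Sf_T\|_{L^p(Y(T))}$ and gain a factor $\lambda$: cutting $E_Sf_T$ off to $Y(T)$ destroys the Fourier localization that decoupling needs. So the inequality $\|E_Sf\|_p^p\lessapprox\mu^{\frac p2-1}\sum_T\|E_Sf_T\|_{L^p(Y(T))}^p$ is not available.

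If you use the legitimate full-norm version directly at $p_d$, the incidence bound leaves the factor $\lambda^{-\frac{d-1}{2}(\frac{p_d}{2}-1)}=\lambda^{-\frac{d-1}{d+2}}$ in $\|E_Sf\|_{p_d}^{p_d}$ uncompensated, and the argument fails for small $\lambda$. Even in your version the $\lambda$-powers do not cancel; you are left with $\lambda^{\frac{3}{d+2}}$. The missing idea is that $\lambda$ enters only through the $L^2$ bound $\|\sum_T E_S(f_T)\mathds{1}_{Y(T)}\|_{L^2(E_\mu)}\lesssim(\lambda R)^{1/2}\|f\|_2$. This must be interpolated (Hölder) with the $L^{10/3}$ refined-decoupling bound $R^{-\frac{3}{10}+\varepsilon}(\mu/m)^{\frac15}\|f\|_2^{\frac35}A(f)^{\frac25}$, where $A(f)=\sup_\theta\|f_\theta\|_{L^2_{\mathrm{avg}}}$. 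Choosing $\frac{1}{p_d}=\frac12-\frac{\tau}{5}$, i.e. $\tau=\frac{5}{d+4}$, is exactly what makes both the $\lambda$- and $R$-powers vanish.

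Your treatment of $m$, the number of packets per cap, is also wrong. $\mathbf{TE}$ applies only to tubes in $\delta$-separated directions. For the $m$-parallel wave-packet family, your bound $\mu\lessapprox\lambda^{-\frac{d-1}{2}}\delta^{d-4}(\delta^3\#\mathbb T)^{1-b}$ is false in general. For all $\delta$-tubes in the unit ball, $\mu\sim\delta^{-3}$, while the right-hand side is $\delta^{d-7+3b}$, which is smaller once $d+3b>4$. Passing to a one-tube-per-cap subfamily gives the correct bound $\frac{\mu}{m}\lessapprox\lambda^{-\frac{d-1}{2}}\delta^{d-4}\bigl(\frac{X}{m}\bigr)^{1-b}$, with $X=\delta^3\#\mathbb T$. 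Correspondingly, orthogonality gives $\frac{X}{m}=R^{-\frac32}K\sim\|f\|_2^2/A(f)^2$, where $K$ is the number of active caps. It does not give $X\lesssim\|f\|_2^2/A(f)^2$, which is off by a factor $m$.

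\textbf{Gap 2: the off-diagonal deduction.} The bound $A(f)\le R^{\frac{3}{2q}}\|f\|_q$ costs a genuine factor $R^{\frac{3b}{q(d+4)}}$. The mixed estimate carries no spare power of $R$ to absorb it, and rescaling does not help.

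What is needed is to exploit $A(g)\lesssim\|g\|_\infty$ on level sets. Set $f_k=f\mathds{1}_{\{2^k\le|f|<2^{k+1}\}}$ and $E_k=\operatorname{supp}f_k$. Then $\|f_k\|_2\lesssim2^k|E_k|^{1/2}$ and $A(f_k)\lesssim2^k$, so the mixed estimate gives $\|E_Sf_k\|_{L^{p_d}(B(0,R))}\lesssim R^\varepsilon2^k|E_k|^{1/q_d}\le R^\varepsilon\|f\|_{q_d}$, with $\frac{1}{q_d}=\frac12-\frac{b}{d+4}$. Levels with $|f|\notin[R^{-C},R^{C}]$ are negligible by the trivial $L^1$ bound. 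Summing the remaining $O(\log R)$ levels gives $L^{q_d}\to L^{p_d}$ with an $R^\varepsilon$ loss.

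The role of $b=\frac{d-1}{3}$ is then simply that $q_d'=\frac{6(d+4)}{5(d+2)}=\frac35p_d$, so the endpoint lies on the scaling line $q'=\frac35p$. It is not about closing $R$-powers. That line also contains $(q,p)=(1,\infty)$ and the Stein--Tomas point, so your interpolation with $L^1\to L^\infty$ covers all $p\ge p_d$. Hölder on $B^3(0,1)$ then gives $q'\le\frac35p$.
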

\begin{proof}
We follow the proof of the localized restriction estimate in Theorem~$3.1$ of \cite{BCCLXZ}, which combines the two-ends multiplicity estimate with refined decoupling. The additional bookkeeping is that, instead of discarding the normalized direction-count factor $R^{-\frac{3}{2}}\frac{\#\mathbb T}{m}$,
we maintain it and express it in terms of the $L^2$-concentration of $f$.\\ Some technical details are skipped, and we guide the reader for the checking of them in the previously mentioned proof.

Let $\mathbb T$ denote the retained tube family and set $
X:=R^{-\frac{3}{2}}\#\mathbb T.$ Let $\lambda_Y$ and $\mu_Y$ be, respectively, the average shading density and average multiplicity of the shaded tube family. Combining a similar expression as \eqref{vera} with $\mathbf{TE}(d,\frac{d+1}{2},b)$ gives
\begin{equation}
\label{mestimate}
\mu_Y
\lessapprox
m^b\lambda_Y^{-\frac{d-1}{2}}
\delta^{d-4}
(\delta^3\#\mathbb T)^{1-b}.
\end{equation}
Until now, we have only dealt with $1$-parallel families of tubes. For decoupling reasons, it is of our interest to instead study $m$-parallel families. Consider a $m$-parallel family of tubes $(\mathbb{T}_m,Y_m)$ with its associated shading. By taking one tube in each neighborhood of possible directions, we obtain a $1$-parallel family and its shading $(\mathbb{T}_1, Y_1)$ satisfying $\#\mathbb{T}_1\gtrsim \frac{\#\mathbb{T}_m}{m}$. Then, an estimate like \eqref{mestimate} follows from applying the incidence estimate to $\mathbb{T}_1$, using monotonicity over the inclusion $E_{Y_1}\subset E_{Y_m}$ and converting the resulting expression into an upper-bound of the average multiplicity $\mu_{Y_m}$.

After the usual wave-packet decomposition and dyadic pigeonholing, we may assume that all active packets have comparable $L^2$-norm and that every active $R^{-\frac{1}{2}}$-cap contains
$\sim m$ wave-packets. Moreover, it suffices to estimate $E_Sf$ on a union $E_Y$ of relevant cubes. 
By Proposition~$3.1$ of \cite{BCCLXZ} and Markov's inequality, after discarding a subset of $E_Y$ of relative size $O(R^{-\varepsilon})$, we may restrict to a set $E_\mu\subset E_Y$ in which $\#\mathbb{T}_Y(Q) \lessapprox \mu$ for all $\delta$-cubes such that $Q\subset E_\mu$, and
$$
\mu
\lesssim_\varepsilon
R^\varepsilon
m^b\lambda_Y^{-\frac{d-1}{2}}
\delta^{d-4}
(\delta^3\#\mathbb T)^{1-b}.
$$
By substituting $\delta=R^{-\frac{1}{2}}$, we obtain
$$
\frac{\mu}{m}
\lesssim_\varepsilon
R^\varepsilon
\lambda_Y^{-\frac{d-1}{2}}
R^{\frac{4-d}{2}}
\left(\frac{X}{m}\right)^{1-b}.
$$
This is the point at which the diagonal and non-diagonal arguments diverge. Indeed, if $K$ denotes the number of active $R^{-\frac{1}{2}}$-caps, then $\#\mathbb T\sim Km.$ There are only $O(R^{\frac{3}{2}})$ such caps on the three-dimensional frequency surface, so $K\lesssim R^{\frac{3}{2}}.$ Consequently,
$$
\frac{X}{m}
=
R^{-\frac{3}{2}}\frac{\#\mathbb T}{m}
\sim
R^{-\frac{3}{2}}K
\lesssim1.
$$
Thus, the usual diagonal argument can simply discard $\left(\frac{X}{m}\right)^{1-b}.$ This is what implicitly happens from Equation $(47)$ to $(48)$ in \cite{BCCLXZ}. For the off-diagonal estimate we instead retain it. Set $A(f):=
\sup_\theta\|f_\theta\|_{L^2_{\mathrm{avg}}},$ where we set $f_\theta:=f \eta_\theta$ for a cap $\theta$ arising from the wave-packet decomposition done over $B^3(0,1)$, $\{\eta_\theta\}_\theta$ a smooth partition of unity subordinated to the cap covering and
$$\| f_\theta\|^2_{L^2_{\text{avg}}}:= \frac{1}{|\theta|}\|f_\theta\|_{L^2(B^3(0,1))}^2.$$
Let $\varrho^2$ denote the common squared $L^2$-mass of the retained packets. By almost orthogonality, $
\|f\|_{L^2(B^3(0,1))}^2\sim Km\varrho^2.$ Since a $R^{-\frac{1}{2}}$-cap has measure $\sim R^{-\frac{3}{2}}$, $
A(f)^2
\sim
R^{\frac{3}{2}}m\varrho^2.$ Combining everything gives
$$
\frac{\mu}{m}
\lesssim_\varepsilon
R^\varepsilon
\lambda_Y^{-\frac{d-1}{2}}
R^{\frac{4-d}{2}}
\left(\frac{\|f\|_{L^2(B^3(0,1))}}{A(f)}\right)^{2(1-b)}.
$$
Next, we use the following two results described in \cite{BCCLXZ}. Firstly, the estimate presented in Equation $(47)$ obtained by a localized refined-decoupling argument that combined with our assumptions leads to the following:
\begin{equation}
\label{refinedestimate}
\left\|\sum_{T\in\mathbb{T}} E_S(f_T)\mathds{1}_{Y(T)}\right\|_{L^{\frac{10}{3}}(E_\mu)}
\lesssim_\varepsilon
R^{-\frac{3}{10}+\varepsilon}
\left(\frac{\mu}{m}\right)^{\frac{1}{5}}
\|f\|_{L^2(B^3(0,1))}^{\frac{3}{5}}A(f)^{\frac{2}{5}}.
\end{equation}
Secondly, the $L^2$-orthogonality bound stated in Lemma~$2.3$:
\begin{equation}
\label{L2orthogonality}
\left\|\sum_{T\in \mathbb{T}} E_S(f_T)\mathds{1}_{Y(T)}\right\|_{L^2(E_\mu)}
\lesssim
(\lambda_Y R)^{\frac{1}{2}}\|f\|_{L^2(B^3(0,1))}.
\end{equation}
Let $0\le\tau\le1$ be determined by $\frac1p=\frac12-\frac{\tau}{5}$, for $2\leq p\leq \frac{10}{3}$. By Riesz--Thorin theorem, interpolating \eqref{refinedestimate} and \eqref{L2orthogonality} yields
$$
\|E_Sf\|_{L^p(E_\mu)}
\lesssim_\varepsilon
R^{\varepsilon+\frac12-\frac{4\tau}{5}}
\lambda_Y^{\frac{1-\tau}{2}}
\left(\frac{\mu}{m}\right)^{\frac{\tau}{5}}
\|f\|_{L^2(B^3(0,1))}^{1-\frac{2\tau}{5}}
A(f)^{\frac{2\tau}{5}}.
$$
Choose $p=p_d$. A direct calculation gives 
$$
\|E_Sf\|_{L^{p_d}(E_\mu)}
\lesssim_\varepsilon
R^\varepsilon
\|f\|_{L^2(B^3(0,1))}^{1-\frac{2b}{d+4}}
A(f)^{\frac{2b}{d+4}}.
$$
This proves \eqref{offdiagestimate}. To pass from the mixed estimate to the stronger $L^q\to L^p$ bound, we reproduce explicitly the standard level-set decomposition mentioned in Remark~$1.2$ of \cite{GWX25}. Let $\alpha=\frac{2b}{d+4}$ and define $q_d$ by $\frac{1}{q_d}=\frac{1-\alpha}{2}$. By homogeneity, assume $\|f\|_{L^{q_d}(B^3(0,1))}=1$. Decompose
$$
f=\sum_{k\in \mathbb{Z}} f_k,
\qquad
f_k=f\,\mathds{1}_{\{2^k\le |f|<2^{k+1}\}}.
$$
If $E_k=\operatorname{supp}(f_k)$, then $\|f_k\|_{L^2(B^3(0,1))}\lesssim 2^k|E_k|^{\frac{1}{2}}$ and $A(f_k)\lesssim 2^k.$
Hence, the mixed estimate \eqref{offdiagestimate} gives
$$
\|E_Sf_k\|_{L^{p_d}(B(0,R))}
\lesssim_\varepsilon
R^\varepsilon
2^k|E_k|^{\frac{1-\alpha}{2}}
=
R^\varepsilon
2^k|E_k|^{\frac{1}{q_d}}.
$$
Since $\|f\|_{L^{q_d}(B^3(0,1))}=1$, $2^{k{q_d}}|E_k|\lesssim1,$ and therefore
\begin{equation}
\label{dyadicestimate}
\|E_Sf_k\|_{L^{p_d}(B(0,R))}
\lesssim_\varepsilon R^\varepsilon.
\end{equation}
Now, we sum over all the possible dyadic levels. By the trivial bound
$$
\|E_Sg\|_{L^{p_d}(B(0,R))}
\lesssim R^{\frac{4}{p_d}}\|g\|_{L^1(B^3(0,1))}, \,\, \text{ for any function } g\in L^1(B^3(0,1)),
$$
the contributions where $|f|<R^{-C}$ or $|f|>R^C$ are negligible for $C$ sufficiently large. Thus, only $O(\log(R))$ dyadic levels remain. Applying \eqref{dyadicestimate} with $\frac{\varepsilon}{2}$-exponent,
$$
\|E_Sf\|_{L^{p_d}(B(0,R))}
\lesssim_\varepsilon
(\log(R))R^{\frac{\varepsilon}{2}}
\lesssim_\varepsilon
R^\varepsilon.
$$
Restoring the normalization gives
$$
\|E_Sf\|_{L^{p_d}(B(0,R))}
\lesssim_\varepsilon
R^\varepsilon\|f\|_{L^{q_d}(B^3(0,1))}.
$$
It remains to pass from the endpoint estimate to the full source range. Assuming now that $b=\frac{d-1}{3}$, we obtain
$$
q_d'
=
\frac{2(d+4)}
{d+4+\frac{2(d-1)}3}
=
\frac{6(d+4)}{5(d+2)}=\frac35p_d.$$
Thus, the endpoint obtained above lies on the conjectural scaling line proposed by Stein in \cite{Ste79}. The Stein--Tomas estimate $E_S:L^2\to L^{\frac{10}{3}}$ is also part of the same line. Interpolating the endpoint estimate with Stein--Tomas therefore gives
$$
q'=\frac35p,
\qquad
p_d\le p\le\frac{10}{3}.
$$
For $p\ge\frac{10}{3}$, the interpolation between Stein--Tomas and the trivial estimate $E_S:L^1\to L^\infty$ continues to be on the same scaling line. Hence, we obtain that our estimate is valid for
$$
q'=\frac35p,
\qquad
\text{ for every }p\ge p_d.
$$
Finally, since the frequency domain has finite measure, Hölder's inequality gives all source exponents inside the scaling line.
\end{proof}
Now, we shall remember the second part of Lemma~\ref{hairbrushest}. This will provide in the next subsection a stronger estimate while in the intermediate robust transverse step before applying Proposition~\ref{RTtoTE}.
\subsubsection{The Hairbrush estimate \texorpdfstring{$H$}{H}}
We wish to apply the second part of Lemma~\ref{hairbrushest} to the final fine family $(\mathbb T_1,Y_1)$. First, since $Y_1(T)\subseteq Y(T)$ for
every $T\in\mathbb T_1$, the original $(\varepsilon_1,C_1)$-two-ends condition implies that
\[
\#\{Q:Q\subset Y_1(T)\cap B(x,r)\}
\le C_1r^{\varepsilon_1}
\lambda_Y\delta^{-1},
\]
for every $\delta\le r\le1$.
Therefore, $(\mathbb T_1,Y_1)$ satisfies the $(\varepsilon_1,C_{1}')$ two-ends condition, with $C_{1}'
=\max\left\{1,
C_1\frac{\lambda_Y}{\lambda_1}\right\}$. By property $(vi)$ of Lemma~\ref{KZconstruction}, we have
$\lambda_1\gtrapprox\lambda$. Consequently, for every $\sigma>0$,
$C_{1}'
\lesssim_\sigma C_1\delta^{-\sigma}$. Moreover, by
Corollary~\ref{cor:fine-transverse},
the fine family satisfies the angular non-concentration hypothesis required. Let $C_0>0$ be the absolute constant in the exponent of the two-ends constant
in Lemma~\ref{hairbrushest}. Applying the lemma with scale-loss parameter $\eta>0$ gives
\[
\mu_1\lesssim_{\eta,\varepsilon_1}
(C_{1}')^{\frac{C_0}{\varepsilon_1}}\delta^{-\eta}
\lambda_1^{-\frac{3}{4}}
\delta^{-1}X_1^{\frac{1}{2}}\lesssim_{\eta,\varepsilon_1,\sigma}
C_1^{\frac{C_0}{\varepsilon_1}}
\delta^{-\eta-\frac{C_0\sigma}{\varepsilon_1}}
\lambda_1^{-\frac{3}{4}}
\delta^{-1}X_1^{\frac{1}{2}}.
\]
For any prescribed $\varepsilon>0$,
we may choose $\eta,\sigma>0$
sufficiently small so that
$\eta+\frac{C_0\sigma}{\varepsilon_1}
<\varepsilon$, while also satisfying the parameter requirements of Corollary~\ref{cor:fine-transverse}. It follows that
\begin{equation}\label{eq:H}
\mu_1
\lessapprox
C_1^{\frac{C}{\varepsilon_1}}
\lambda_1^{-\frac{3}{4}}
\delta^{-1}X_1^{\frac{1}{2}}:=H.
\end{equation}

\subsubsection{Interpolation}
\label{off-diagonal}
Now, it is left to find the best interpolation between the estimates $G,P,H$ in order to achieve a reasonable two-ends estimate that improves the result so far known. We will first present the interpolation exponents and show that it improves the result in Corollary~$1.2$ of \cite{GWX25} for $n=4$. After that, we prove its sharpness. 

Interpolate the $G,P,H$-estimates with exponents $\frac{1}{14},$ $\frac{3}{7}$ and $\frac{1}{2}$, respectively. By a direct computation, we get:
$$
\mu_1 \lessapprox C_1^{\frac{C}{\varepsilon_1}} q^{-\frac{1}{8}}
\lambda_1^{-\frac{55}{56}}
\delta^{-\frac{55}{56}}
X_1^{\frac{55}{168}}.
$$
The factor $q=\frac{\lambda_{\rho}}{\lambda_3}$ satisfies $q\gtrapprox 1$, so it may be discarded. Therefore, we obtain
$$
\mu_1
\lessapprox
C_1^{\frac{C}{\varepsilon_1}} \lambda_1^{-\frac{55}{56}}
\delta^{-\frac{55}{56}}
X_1^{\frac{55}{168}}.
$$
Using a similar procedure as in \eqref{vera}, we get that
$\mathbf{RT}
\left(
\frac{169}{56},
\frac{111}{56},
\frac{113}{168}
\right)$ holds.
Since $\frac{113}{168}=\frac{\frac{169}{56}-1}{3}$, we have that $\mathbf{RT}
\left(
\frac{169}{56},
\frac{111}{56},
\frac{113}{168}
\right)$ implies
$\mathbf{TE}
\left(
\frac{169}{56},
\frac{111}{56},
\frac{113}{168}
\right)$. Since $0<\lambda_1\leq 1$ and $\frac{111}{56}<\frac{225}{112}$, we have that $\mathbf{TE}\left(
\frac{169}{56},
\frac{111}{56},
\frac{113}{168}
\right)$ implies $\mathbf{TE}\left(
\frac{169}{56},
\frac{225}{112},
\frac{113}{168}
\right)$. The latter is particularly interesting since it is of the form $\mathbf{TE}\left(d,\frac{d+1}{2},\frac{d-1}{3}\right)$. By applying Proposition~\ref{b-dependentdec}, we immediately obtain  Corollary~\ref{offdiagresult}. Note that $\frac{786}{281}\approx2.79715<2.8 $ so we get a strictly better range of values of $p$.
\subsection{Sharpness of the interpolation technique}
\label{offsharpness}
This section will follow a similar procedure to the one presented in Section~\ref{diagsharpness}. For this reason, we will only briefly discuss how the argument should go.
Consider an arbitrary geometric interpolation
\[
I=G^wP^uH^z,
\qquad
u,w,z\ge0,
\qquad
u+w+z=1.
\]
Its geometric mean gives a multiplicity estimate containing the factor $\rho^{\frac{u-6w}{6}}$. Using $\delta\le\rho\le1$, this factor can always be eliminated, possibly at the expense of an additional power of $\delta$. By a similar result as in Lemma~\ref{lem:PLH-supporting-inequality} substituting the $R$-estimate by the $H$ one, the optimal interpolation occurs precisely at the boundary $u=6w$, where the interpolated $\rho$-power vanishes and no additional $\delta$-loss is gained. For any choice of exponents, the power of $q=\frac{\lambda_{\rho}}{\lambda_3}$ is favorable. After applying uniform multiplicity, the resulting two-ends exponents are
\[
d=3+\frac{w}{4},
\qquad
b=\frac{1}{2}-\frac{w}{4}+\frac{4}{9}u.
\] 
We remark that the $X$-factor always satisfies $X=\delta^3\#\mathbb T\lesssim1.$ It follows that a lower-bound involving $X^b$ is at least as strong as the one with the critical exponent, up to an absolute constant, if we require $b\leq \frac{d-1}{3}$. Thus, 
$$ \frac{1}{2}-\frac{w}{4}+\frac{4}{9}u \le \frac{2+\frac{w}{4}}{3}\quad \implies \quad 8u-6w\leq3. $$
Combining both conditions gives $w\leq \frac{1}{14}$. But the value of $d$ increases with the parameter $w$. Therefore, the largest possible $d$ available from the three estimates is attained at $ w=\frac{1}{14}$. Equality forces $ u=6w=\frac{3}{7}$ and $z=1-u-w=\frac12$. Note that these exponents exactly match the ones we chose in each estimate, which ends the sharpness argument.
\section{Proof of Theorem~\ref{vera3}, Corollary~\ref{Kakeyalowerbound} and Sharpness}
\label{bootstrapsection}
\subsection{A general X-ray bootstrap}
\label{subsec:general-xray-bootstrap}

We now combine the fine/coarse refinement with the \L aba--Tao
X-ray estimate and the Guth--Zahl trilinear estimate. The heuristic on why this new estimate will turn out to be useful was already noted in Remarks $3.1$ and $4.1$ of \cite{BCCLXZ}. The bootstrap
mechanism is the same as the one appearing in the proof of
Lemma~38 in \cite{KZ}, but we retain the three parameters
$(a,\ell,x)$ since the two-ends estimates used below do not lie in the special one-parameter $\alpha$ family considered there. We first record the form of the \L aba--Tao X-ray estimate that will be
used later.

\subsubsection{The \L aba--Tao estimate $L$}
\label{Lestimate}

The discretized adjoint X-ray estimate in dimension four is given by restricting Theorem $1.2$ of \cite{LabaTao2001} to $n=4$, whose expression reads as follows:
\begin{equation}
\left\|
\sum_{T\in\mathcal A}\mathds{1}_T
\right\|_{L^{\frac{3}{2}}(\mathbb{R}^4)}
\lessapprox
\tau^{-\frac{1}{3}}
m^{\frac{5}{36}}
\left(
\tau^3\#\mathcal A
\right)^{\frac{7}{9}},
\label{eq:LT-R4-final}
\end{equation}
for a family $\mathcal{A}$ of essentially distinct $\tau$-tubes with at most $m$ tubes in each $\tau$-separated direction. Apply \eqref{eq:LT-R4-final} to $\mathcal{A}=\mathbb T_\rho$. The fine/coarse refinement gives essentially distinct coarse tubes, with $\approx A$ tubes in each relevant $\rho$-separated direction. Thus, $
m\approx A$ and $\tau=\rho$. We find that the following estimate holds:
\begin{equation}
\left\|
\sum_{T_\rho\in\mathbb T_\rho}
\mathds{1}_{T_\rho}
\right\|_{L^{\frac{3}{2}}(\mathbb{R}^4)}
\lessapprox
\rho^{-\frac{1}{3}}
A^{\frac{5}{36}}
{X_\rho}^{\frac{7}{9}}.
\label{eq:LT-coarse-final}
\end{equation}
Double counting gives 
\begin{equation}
\mu_\rho|E_\rho|\approx \sum_{T_\rho\in\mathbb{T_\rho}}|Y_\rho(T_\rho)|\sim \lambda_\rho X_\rho\implies |E_\rho|
\approx
\frac{\lambda_\rho X_\rho}{\mu_\rho},
\text{ with }
E_\rho
:=
\bigcup_{T_\rho\in\mathbb T_\rho}Y_\rho(T_\rho).
\end{equation}
Note that, on $E_\rho$, we have
$\sum_{T_\rho\in\mathbb T_\rho}\mathds{1}_{T_\rho}
\gtrapprox \mu_{\rho}.$ Take $x\in E_\rho$ and let $Q_\rho$ be a $\rho$-cube containing $x$. The pointwise bound follows after seeing that every tube counted in $\mathbb T_\rho(Q_\rho)$ contains $Q_\rho$ and hence contains $x$. For this reason, one obtains:
$$
\sum_{T_\rho\in\mathbb T_\rho}\mathds{1}_{T_\rho}(x)
\ge
\#\mathbb T_\rho(Q_\rho)
\approx
\mu_{\rho}.$$
This last relation implies
$$
\left\|
\sum_{T_\rho\in \mathbb{T}_{\rho}}\mathds{1}_{T_\rho}
\right\|_{L^{\frac{3}{2}}(\mathbb{R}^4)}
\gtrapprox
\mu_\rho|E_\rho|^{\frac{2}{3}} \approx
\mu_\rho^{\frac{1}{3}}
\lambda_{\rho}^{\frac{2}{3}}
X_{\rho}^{\frac{2}{3}}.
$$
Combining this with \eqref{eq:LT-coarse-final} yields
\begin{equation}
\mu_{\rho}
\lessapprox
\lambda_{\rho}^{-2}
\rho^{-1}
A^{\frac{5}{12}}
X_{\rho}^{\frac{1}{3}}=:L.
\label{eq:LT-coarse-multiplicity-final}
\end{equation}
Likewise the $G$-estimate, the \L aba--Tao estimate does not depend on $C_3$ (and subsequently there is no dependence to be tracked over the constant $C_1$).

We now formulate the parameterized version of the Katz--Zahl X-ray
bootstrap.
\begin{lemma}[General X-ray bootstrap]
\label{lem:general-xray-bootstrap}
Suppose that $
\mathbf{TE}(4-a,1+\ell,1-x)
$ holds, where
$
\ell\geq0$, 
$0\leq x\leq\frac7{16},$ and $a-1+\frac{4x}{7}\geq0.$ Set
$w
:= \left(a+\frac{4x}{7}\right)^{-1}$,
and define
\begin{equation}
a_*
:=
\frac34
+
\left(a-\frac34\right)w,
\,\,
\ell_*
:=
\left(
\ell+\frac{12x}{7}
\right)w
+
\frac94(1-w), \,\,
x_*:=
\frac{4x}{21}w
+
\frac34(1-w).
\label{bootstrapeq}
\end{equation}
Then,
$
\mathbf{RT}(4-a_*,1+\ell_*,1-x_*)
$ holds. If, in addition, $
x_*\leq\frac{a_*}3
$ is satisfied, then
$
\mathbf{TE}(4-a_*,1+\ell_*,1-x_*)
$ holds.
\end{lemma}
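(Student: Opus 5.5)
The plan is to run the same fine/coarse scheme as in the proof of Theorem~\ref{thm:improved-two-ends}, replacing the planebrush input $P$ and the $M$-retained input $R$ by two new ingredients: the hypothesis $\mathbf{TE}(4-a,1+\ell,1-x)$ applied inside a single coarse tube, and the \L aba--Tao estimate $L$ of \eqref{eq:LT-coarse-multiplicity-final} applied to the coarse family. The combination is then interpolated with the Guth--Zahl estimate $G$ of \eqref{eq:G-final}. Following Definition~\ref{TEdef} and the convention of the previous sections, I first assume that $(\mathbb T,Y)$ is $(\varepsilon_1,C_1)$-two-ends and $(\varepsilon_2,100)$-robustly transverse, and apply Lemma~\ref{KZconstruction}. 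This produces $\rho$, $(\mathbb T_1,Y_1)$, $(\mathbb T_\rho,Y_\rho)$, $A$, $D$ and $\widetilde\mu$, with $X_1,X_\rho,q$ defined as before.

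\textbf{Step 1: a fine-inside-coarse bound for $\widetilde\mu$.} I fix a coarse tube $T_\rho$ with $\lambda_1(T_\rho)\ge\lambda_1$ and rescale it anisotropically to unit size, using the cubical reduction \eqref{anisotropic}. This gives a family of $\frac{\delta}{\rho}$-tubes in $\delta/\rho$-separated directions, of normalized size \eqref{descendantsrelation}, with multiplicity $\approx\widetilde\mu$ and two-ends constant $\lessapprox KC_1$. Applying the hypothesis $\mathbf{TE}(4-a,1+\ell,1-x)$ at scale $\frac{\delta}{\rho}$, in the $C_1$-quantitative form of Lemma~\ref{lem:quantitative-TE}, and converting volume to multiplicity as in \eqref{vera}, gives
\[
\widetilde\mu\lessapprox C_1^{\frac{C}{\varepsilon_1}}\lambda_1^{-\ell}\Bigl(\frac{\delta}{\rho}\Bigr)^{-a}\Bigl(\frac{X_1}{X_\rho}\Bigr)^{x}.
\]

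\textbf{Step 2: combine with $L$.} Properties $(xi)$ and $D\ge1$ give $\mu_1\lessapprox\mu_\rho\widetilde\mu$. I insert the bound $\mu_\rho\lessapprox L=\lambda_\rho^{-2}\rho^{-1}A^{5/12}X_\rho^{1/3}$, then use $\lambda_\rho\approx q\lambda_1$ and \eqref{Arelation} to trade the power of $A$ for powers of $X_\rho/X_1$. The leftover $X_\rho$-power must then be removed. For this I interpolate against the crude bound \eqref{preliminar1}-type estimate coming from $(ix)$, which is exactly what produces the factor $\frac{4x}{7}$ and the constraint $x\le\frac7{16}$. The constraint ensures that the $X_\rho$-exponent has a sign allowing elimination, either through $X_\rho\lesssim1$ or through a favourable interpolation. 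The outcome should be an estimate
\[
F:\quad \mu_1\lessapprox C_1^{\frac{C}{\varepsilon_1}}q^{(\cdot)}\lambda_1^{-(\cdot)}\delta^{-(a+\frac{4x}{7})}\rho^{\,a-1+\frac{4x}{7}}X_1^{(\cdot)},
\]
in which the $q$-power is nonpositive.

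\textbf{Step 3: interpolate and conclude.} I form $F^{w}G^{1-w}$ with $w=(a+\frac{4x}{7})^{-1}$. This weight makes the total $\rho$-exponent vanish, since $G$ carries $\rho^{-1}$ and $w\,(a-1+\tfrac{4x}{7})=1-w$; the hypothesis $a-1+\frac{4x}{7}\ge0$ guarantees $w\le1$. Bookkeeping then yields $\mu_1\lessapprox C_1^{C/\varepsilon_1}\lambda_1^{-\ell_*}\delta^{-a_*}X_1^{x_*}$ with $a_*,\ell_*,x_*$ as in \eqref{bootstrapeq}. I would verify this by checking separately the $\delta$-, $\lambda$- and $X_1$-exponents: the $G$ contributions $\frac34,\frac94,\frac34$ weighted by $1-w$, and the $F$ contributions weighted by $w$. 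Finally I pass to volumes via \eqref{vera}, return to the original family using \eqref{refinementdensity} and $\lambda_1\gtrapprox\lambda$, obtaining $\mathbf{RT}(4-a_*,1+\ell_*,1-x_*)$. When $x_*\le\frac{a_*}{3}$, that is $b=1-x_*\ge\frac{d-1}{3}$, Proposition~\ref{RTtoTE} upgrades this to $\mathbf{TE}$.

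The main obstacle I expect is Step~2: correctly eliminating $A$ and $X_\rho$ so that precisely $\frac{4x}{7}$ appears, while keeping the $q$-exponent favourable and the $C_1$-dependence of the rescaled $\mathbf{TE}$ application under control. For the latter I would rely on Remark~\ref{epsilon1remark} and the quantitative version in Lemma~\ref{lem:quantitative-TE}.
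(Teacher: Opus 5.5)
Your Steps 1 and 3 match the paper. Step 2, however, has a genuine gap: with the ingredients you list, the factor $\rho^{\frac{4x}{7}}$ cannot appear. You bound $\mu_\rho$ by $L$ alone and drop $D$ via $D\ge1$. Since $L\propto\rho^{-1}$, both $\mu_1\lessapprox L\,\widetilde\mu$ and the $(ix)$-bound $\mu_1\le\rho^{-1}\widetilde\mu$ carry exactly $\rho^{-1}\widetilde\mu$. After inserting the rescaled $\mathbf{TE}$ bound, every interpolation of the two has $\rho$-power $a-1$, whatever the weight. For $a<1$ (every application) this cannot be cancelled against the $\rho^{-1}$ of $G$, and $\rho\ge\delta$ gives $\delta^{-a}\rho^{a-1}\le\delta^{-1}$, so nothing beyond dimension $3$.

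The remaining bookkeeping also fails. $L$ carries $\lambda_\rho^{-2}$ and $A^{5/12}$, which do not produce $\frac{12x}{7}$ and $\frac{4x}{21}$. Your stated $F$ is inconsistent with the target as well: with $\delta^{-(a+\frac{4x}{7})}$ and $w=(a+\frac{4x}{7})^{-1}$, the interpolated $\delta$-exponent is $1+\frac34(1-w)$, not $a_*$. The correct intermediate bound carries $\delta^{-a}$.

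The missing ingredient is the coarse-scale planebrush estimate (Proposition 34 of Katz--Zahl), $\mu_\rho\lessapprox C_1^{C/\varepsilon_1}\lambda_\rho^{-1/3}\rho^{-2/3}A^{1/3}D^{4/3}$. Interpolate it with $L$ at weights $\frac34,\frac14$ to get $\mu_\rho\lessapprox\lambda_\rho^{-3/4}\rho^{-3/4}A^{17/48}X_\rho^{1/12}D$. This does two things:
\begin{itemize}
\item the power of $D$ is exactly one, so it cancels against the $D^{-1}$ of $(xi)$ rather than being discarded;
\item the $\rho$-power improves from $-1$ to $-\frac34$.
\end{itemize}
With the rescaled $\mathbf{TE}$ bound this gives $\mu_1\lessapprox\lambda_1^{-\ell-3/4}\delta^{-a}\rho^{a-3/4}A^{17/48}X_1^{x}X_\rho^{1/12-x}$. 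Now interpolate with the $(ix)$-bound $\lambda_1^{-\ell}\delta^{-a}\rho^{a-1}X_1^{x}X_\rho^{-x}$ at weight $t=\frac{16x}{7}$. This $t$ is chosen so that the resulting factor $A^{17x/21}X_\rho^{-17x/21}$ is removed by \eqref{Arelation}, leaving $X_1^{4x/21}$. The same weight produces the $\rho$-gain $\rho^{t/4}=\rho^{4x/7}$ and the $\lambda$-loss $\lambda_1^{-3t/4}=\lambda_1^{-12x/7}$. The hypothesis $x\le\frac7{16}$ is precisely $t\le1$, not a sign condition on the $X_\rho$-exponent. Only after this does the interpolation with $G$ at weight $1-w$ give $a_*,\ell_*,x_*$ as stated.
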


\begin{proof}
The proof follows the argument of Katz--Zahl for Lemma~38, retaining the
parameters $(a,\ell,x)$. At scale $\rho$, the planebrush estimate derived in Proposition $34$ of \cite{KZ} gives
\begin{equation}
\mu_{\rho}
\lessapprox
C_1^{\frac{C}{\varepsilon_1}}\lambda_\rho^{-\frac{1}{3}} X_\rho^{\frac{1}{3}} \rho^{-\frac{2}{3}}\left(\rho^3 \#\Omega\right)^{-\frac{1}{3}} D^{\frac{4}{3}}\sim 
C_1^{\frac{C}{\varepsilon_1}}\lambda_{\rho}^{-\frac{1}{3}}
\rho^{-\frac{2}{3}}
A^{\frac{1}{3}}
D^{\frac{4}{3}},
\label{eq:PB-bootstrap-final}
\end{equation}
for some constant $C>0$. Interpolating \eqref{eq:PB-bootstrap-final} with
\eqref{eq:LT-coarse-multiplicity-final}, with the respective exponents $\frac{3}{4}$ and $\frac{1}{4}$, gives
\begin{equation}
\mu_{\rho}
\lessapprox C_1^{\frac{C}{\varepsilon_1}}
\lambda_{\rho}^{-\frac{3}{4}}
\rho^{-\frac{3}{4}}
A^{\frac{17}{48}}
X_{\rho}^{\frac{1}{12}}
D.
\label{eq:PB-LT-final}
\end{equation}
Property $(xi)$ of Lemma~\ref{KZconstruction} together with
$\lambda_{\rho}\gtrapprox\lambda_1$ make \eqref{eq:PB-LT-final} imply
\begin{equation}
\mu_1
\lessapprox C_1^{\frac{C}{\varepsilon_1}}
\lambda_1^{-\frac{3}{4}}
\rho^{-\frac{3}{4}}
A^{\frac{17}{48}}
X_{\rho}^{\frac{1}{12}}
\widetilde\mu.
\label{eq:fine-PB-LT-final}
\end{equation}

We now wish to apply the assumed two-ends estimate $\mathbf{TE}(4-a,1+\ell,1-x)$ to the rescaled fine descendants, as in \eqref{eq:mu-tilde}. However, by Definition~\ref{TEdef},
the assertion $\mathbf{TE}(d,e,b)$ is formulated for families whose two-ends constant is bounded by $100$. Since our fine family may inherit a two-ends constant depending on $C_1$, we first record the following quantitative extension of the assumed estimate.

\begin{lemma}[Quantitative two-ends reduction]
\label{lem:quantitative-TE}
Suppose that $\mathbf{TE}(d,e,b)$ holds, where $d\le4$, $e\ge1$ and $0\le b\le1$. Let $(\mathbb T,Y)$ be a family of $s$-tubes in $\mathbb{R}^4$ pointing in $s$-separated directions, and suppose
that it satisfies the $(\varepsilon_1,C_1)$-two-ends condition, with $C_1\ge1$. Then, for every $\eta>0$,
\[
|E_Y|
\gtrsim_{\eta,\varepsilon_1,d,e,b}
C_1^{-\frac{8}{\varepsilon_1}}
s^{4-d+\eta}\lambda_Y^e
(s^3\#\mathbb T)^b.
\]
In particular, the exponent of $C_1$
is independent of $\eta$.
\end{lemma}

\begin{proof}
We follow the standard two-ends
localization argument from the proof of Proposition $11$ in \cite{KZ}, keeping track of the dependence on $C_1$. After the usual density uniformization and localization on balls containing at least a shaded-cube fraction of $r^{\frac{\varepsilon_1}{2}}$, we obtain a common scale $r$ satisfying $r\gtrsim_{\varepsilon_1}
C_1^{-\frac{2}{\varepsilon_1}}$, such that the localized shadings satisfy a fixed-constant two-ends condition after rescaling by $r^{-1}$. More precisely, the rescaled tubes have width $\frac{s}{r}$ (which must be bounded by $1$) and shading density bounded below by $\widetilde{\lambda}
\gtrsim\lambda_Y r^{\frac{\varepsilon_1}{2}-1}$, up to logarithmic losses.  Since the
original directions are $s$-separated, we may retain a subfamily with $\frac{s}{r}$-separated directions, losing at most a factor $r^3$ in the tube
count.

Applying $\mathbf{TE}(d,e,b)$ to
these rescaled families, and then
returning to the original scale,
introduces the additional factor
$r^{d-e\left(1-\frac{\varepsilon_1}{2}\right)-\frac{\eta}{2}}$. Since $d\le4$, $e\ge1$, and $0<r\le1$, we have, for $0<\eta<1$, $r^{d-e\left(1-\frac{\varepsilon_1}{2}\right)-\frac{\eta}{2}} \ge r^4
\gtrsim_{\varepsilon_1}
C_1^{-\frac{8}{\varepsilon_1}}.$ Summing over the disjoint localized
regions and using $0\le b\le1$,
we recover the required lower-bound.
The density uniformization and
pigeonholing losses are logarithmic.

Finally, if
$C_1\gtrsim_{\varepsilon_1}
s^{-\frac{\varepsilon_1}{2}}$,
the result follows directly from
$|E_Y|\gtrsim\lambda_Ys^3$, since $s^3\#\mathbb T\lesssim1$
and $C_1^{-\frac{8}{\varepsilon_1}}
s^{4-d+\eta}\lambda_Y^e
(s^3\#\mathbb T)^b
\lesssim_{\varepsilon_1}
\lambda_Ys^{8-d+\eta}
\leq\lambda_Ys^3$.
\end{proof}
Using a similar procedure as the one done in \eqref{anisotropic}, we can observe that the rescaled descendants satisfy
the $(\varepsilon_1, \widetilde{C_1})$
two-ends condition, where the constant satisfies
$\widetilde{C_1}
\lessapprox\max\left\{
1,C_1\frac{\lambda_Y}
{\widetilde{\lambda}}
\right\}$. Consequently, for every $\sigma>0$, $\widetilde{C_1}
\lesssim_\sigma C_1\delta^{-\sigma}$. We now apply
Lemma~\ref{lem:quantitative-TE}
to the rescaled fine descendants. Taking $d=4-a, e=1+\ell, b=1-x$ and recalling \eqref{descendantsrelation}, we obtain
\begin{equation}
\label{eq:rescaled-TE-final}
\widetilde\mu
\lesssim_{\eta,\varepsilon_1}
\widetilde{C_1}^{\frac{8}{\varepsilon_1}}
\left(\frac{\delta}{\rho}\right)^{-a-\eta}
\lambda_1^{-\ell}
\left(\frac{X_1}{X_\rho}\right)^x\lessapprox C_1^{\frac{8}{\varepsilon_1}}
\lambda_1^{-\ell}
\left(\frac{\delta}{\rho}\right)^{-a}
\left(\frac{X_1}{X_\rho}\right)^x.
\end{equation}
Substituting \eqref{eq:rescaled-TE-final} into
\eqref{eq:fine-PB-LT-final} yields
\begin{equation}
\mu_1
\lessapprox C_1^{\frac{C}{\varepsilon_1}}
\lambda_1^{-\ell-\frac{3}{4}}
\delta^{-a}
\rho^{a-\frac{3}{4}}
A^{\frac{17}{48}}
X_1^x
X_{\rho}^{\frac{1}{12}-x}.
\label{eq:bootstrap-bound-I-final}
\end{equation}
We also have the independent refinement estimate
$\mu_1\leq \rho^{-1}\widetilde\mu$ (see property $(ix)$ of Lemma~\ref{KZconstruction}). Similarly \eqref{eq:rescaled-TE-final} upgrades this estimate to the following:
\begin{equation}
\mu_1
\lessapprox C_1^{\frac{C}{\varepsilon_1}}
\lambda_1^{-\ell}
\delta^{-a}
\rho^{a-1}
X_1^x
X_{\rho}^{-x}.
\label{eq:bootstrap-bound-II-final}
\end{equation} Interpolate
\eqref{eq:bootstrap-bound-I-final} and
\eqref{eq:bootstrap-bound-II-final}
with exponents $
t=\frac{16x}{7}$ and 
$1-t,$ respectively. The hypothesis
$
0\leq x\leq\frac7{16}
$ ensures that $0\leq t\leq1$. Using \eqref{Arelation}, we conclude that
\begin{equation}
\mu_1
\lessapprox C_1^{\frac{C}{\varepsilon_1}}
\lambda_1^{-\ell-\frac{12x}{7}}
\delta^{-a}
\rho^{a-1+\frac{4x}{7}}
X_1^{\frac{4x}{21}}.
\label{eq:pre-G-final}
\end{equation}
Set $
r
:=
a-1+\frac{4x}{7}.
$ By hypothesis, $r\geq0$. Interpolate \eqref{eq:pre-G-final} and \eqref{eq:G-final} with exponents $w$ and $1-w$, respectively, where $w=(1+r)^{-1}$. When $w=1$, this is understood simply as using \eqref{eq:pre-G-final}. The $\rho$-power cancels and we obtain
\begin{equation}
\mu_1
\lessapprox C_1^{\frac{C}{\varepsilon_1}}
\lambda_1^{-\ell_*}
\delta^{-a_*}
X_1^{x_*},
\label{eq:bootstrap-output-multiplicity-final}
\end{equation}
where $a_*,\ell_*,x_*$ are given by the expressions in 
\eqref{bootstrapeq}. We now transfer the estimate from the refined family to the original
family. Since $\ell\geq0$ and $0<w\leq 1$, we have $
\ell_*\geq0.
$ Moreover, $
x_*\geq0.
$ Therefore, applying a similar procedure to the one in \eqref{refinementdensity}, one gets:
$$
|E|
\geq
|E_1|\gtrapprox
C_1^{-\frac{C}{\varepsilon_1}}\lambda_1X_1
\lambda_1^{\ell_*}
\delta^{a_*}
X_1^{-x_*}\gtrapprox
C_1^{-\frac{C}{\varepsilon_1}}\lambda^{1+\ell_*}
\delta^{a_*}
X^{1-x_*},
$$
for $E=\bigcup_{T\in\mathbb{T}}Y(T)$ and $\mathbb{T}_1$ to be the fine refinement of the family $\mathbb{T}$. Thus, we find that $\mathbf{RT}(4-a_*,1+\ell_*,1-x_*)$ holds. Finally, one can apply Proposition~\ref{RTtoTE} whenever
$
1-x_*
\geq
\frac{(4-a_*)-1}{3}$, which is equivalent to $x_*\leq\frac{a_*}3.$ Under this additional condition,
$\mathbf{TE}(4-a_*,1+\ell_*,1-x_*)$ holds.
\end{proof}

\begin{remark}
\label{specialization}
If one chooses $a=\alpha$,
$\ell=\frac{47}{28},$ and 
$x=\frac{\alpha}{3}$, then
$w=\frac{21}{25\alpha}$, and the preceding argument specializes to the interpolation mechanism in the first part of Lemma~$38$ proved by Katz--Zahl, after using the fact that $X^{1-x_*}\gtrsim X$. The purpose of Lemma~\ref{lem:general-xray-bootstrap} is only to retain
the general three-parameter dependence in order to obtain the best interpolation as possible between the estimates.
\end{remark}
\subsection{A second and third improved two-ends incidence estimates} First, we present the choices of interpolation and the calculations behind each iteration of the general bootstrap. After that, we prove that the proposed configuration is the sharpest one. Sharpness should be intended with the same reasoning as the one exposed in Section~\ref{diagsharpness}.  One can note that the bootstrap applied to our sharp interpolation cannot beat the Hausdorff dimension lower-bound for Kakeya sets achieved by Katz--Zahl. For that reason, we instead apply directly to their two-ends estimates and obtain a strictly better result. Finally, using the bootstrapped Katz--Zahl incidence estimate, we are able through a diagonal interpolation to get the claimed Kakeya maximal estimate improvement. 

Before beginning the argument, we indicate why a particular parameter choice will appear. Eliminating the intermediate scale $\rho$ from the interpolation of $G,P,H$ and further study of eligibility for a bootstrap iteration leads to the one-parameter family
\begin{equation}
\label{GPHinterpolation}
G^\sigma P^{6\sigma}H^{1-7\sigma},
\qquad \frac{1}{14}\le \sigma\le \frac{1}{7}.
\end{equation}

After one application of the X-ray bootstrap, the resulting dimensional parameter is strictly improved as $\sigma$ decreases, so the optimal one-step choice is $\sigma=\frac{1}{14}$. 
\newpage
This endpoint continues to be the optimizing one on the second-step bootstrap, where the dimensional parameter is improved by the decreasing of $\sigma$ as well. Thus, one just needs to show that the second iteration is better than the zeroth and the first, and that there is no possible combination of exponents to get a third.

\subsubsection{Twice X-ray bootstrap application} 
\label{twiceXray}
Interpolate the $G,P,H$-estimates with exponents $\frac{1}{14}$, $\frac{3}{7}$, and $\frac{1}{2}$, respectively. The $\rho$-powers cancel and the remaining estimate, after excluding the $q$-factor, is 
\begin{equation}
\mu_1
\lessapprox
C_1^{\frac{C}{\varepsilon_1}}
\lambda_1^{-\frac{55}{56}}
\delta^{-\frac{55}{56}}
X_1^{\frac{55}{168}}.
\end{equation}
After the standard refinement transfer \eqref{refinementdensity} and double counting \eqref{vera}, we get
$\mathbf{RT}
\left(
\frac{169}{56},
\frac{111}{56},
\frac{113}{168}
\right)$.
Since
$
\frac{113}{168}
=
\frac13
\left(
\frac{169}{56}-1
\right),
$
Proposition~\ref{RTtoTE} then implies that the assumption
$\mathbf{TE}
\left(
\frac{169}{56},
\frac{111}{56},
\frac{113}{168}
\right)$ is valid.
Choose
$a_{0}=\frac{55}{56}$,
$\ell_{0}=\frac{55}{56}$ and
$x_{0}=\frac{55}{168}$.
Note that
$
a_{0}-1+\frac{4x_{0}}{7}
=
\frac{199}{1176}>0,
$
and hence
$
w_{0}
=
\frac{1176}{1375}.
$
One X-ray iteration gives
$a_{1}=\frac{5217}{5500}$,
$\ell_{1}=\frac{9051}{5500}$, and
$x_{1}=\frac{2671}{16500}$.
Moreover,
$
x_{1}
=
\frac{2671}{16500}
<
\frac{5217}{16500}
=
\frac{a_{1}}{3}$.
Therefore,
$\mathbf{TE}
\left(
\frac{16783}{5500},
\frac{14551}{5500},
\frac{13829}{16500}
\right)$ holds.
We may iterate it once more. Indeed,
$
a_{1}-1+\frac{4x_{1}}{7}
=
\frac{431}{10500}>0.
$
Thus,
$w_{1}=\frac{10500}{10931}$. The second iteration yields
$a_{2}=\frac{452451}{480964}$,
$\ell_{2}=\frac{84651}{43724}$, and
$x_{2}=\frac{85405}{1442892}$.
Again,
$
x_{2}
=
\frac{85405}{1442892}
<
\frac{452451}{1442892}
=
\frac{a_{2}}{3}.
$
Consequently,
we have
$\mathbf{TE}
\left(
\frac{1471405}{480964},
\frac{128375}{43724},
\frac{1357487}{1442892}
\right)$.
A third iteration is not admissible because
$
a_{2}-1+\frac{4x_{2}}{7}
=
-\frac{257153}{10100244}<0.
$
In particular, the remark done in Definition $1.2$ of \cite{BCCLXZ} implies that every Kakeya set $K\subset \mathbb{R}^4$ satisfies
\begin{equation}
\dim_{\mathbb{H}}K
\geq
\frac{1471405}{480964}
\approx
3.05928.
\label{eq:hausdorff-final}
\end{equation}
Like previously mentioned, this does not improve the result achieved by Katz--Zahl, which gives 
$3+\frac{\sqrt{17665}-97}{600}
\approx3.05984$ as a lower-bound. We shall bootstrap the \textbf{TE}-estimate presented in Lemma $40$ (more precisely, Equation $(111)$) of \cite{KZ} in order to obtain a surpassing result.

\subsubsection{X-ray bootstrap applied to Katz--Zahl}
\label{sec:KZ-xray-bootstrap}

We now apply Lemma~\ref{lem:general-xray-bootstrap} to the two-ends estimate of Katz--Zahl. Let $\alpha=\frac{257-\sqrt{17665}}{128}$. The above mentioned incidence estimate can be described as $\mathbf{TE}\left(
4-\alpha,\frac{75}{28},1-\frac{\alpha}{3}
\right)$. Choose $a_{0,KZ}=\alpha$, 
$\ell_{0,KZ}=\frac{47}{28},$ and $x_{0,KZ}=\frac{\alpha}{3}$, just as noted in Remark~\ref{specialization}.
Note that $0<x_{0,KZ}<\frac7{16}$ and $a_{0,KZ}-1+\frac{4x_{0,KZ}}7
=\frac{25\alpha-21}{21}>0$. Hence, the first X-ray bootstrap is admissible, with $w_{0,KZ}=\frac{21}{25\alpha}$. Applying Lemma~\ref{lem:general-xray-bootstrap}, we obtain $a_{1,KZ}=\frac{3(53\alpha-21)}{100\alpha}$,  $\ell_{1,KZ}=\frac{3(91\alpha-16)}{100\alpha}$ and  $x_{1,KZ}=\frac{241\alpha-189}{300\alpha}$. Moreover,
$\frac{a_{1,KZ}}3-x_{1,KZ}
=\frac{63-41\alpha}{150\alpha}>0$. Therefore, by Proposition~\ref{RTtoTE}, $\mathbf{TE}\left(4-a_{1,KZ}, 1+\ell_{1,KZ}, 1-x_{1,KZ} \right)$ holds. We may iterate once more. Indeed, $0<x_{1,KZ}<\frac7{16}$ and
$a_{1,KZ}-1+\frac{4x_{1,KZ}}7
= \frac{2203\alpha-2079}{2100\alpha}>0$.
Thus, $w_{1,KZ} = \frac{2100\alpha}{4303\alpha-2079}.$ The second iteration yields $a_{2,KZ} = \frac{3(6655\alpha-3843)}
{4(4303\alpha-2079)}$, $\ell_{2,KZ}
=\frac{21(2587\alpha-1515)}
{4(4303\alpha-2079)}$, and $x_{2,KZ}
= \frac{23683\alpha-21735} {12(4303\alpha-2079)}$.
Again, $\frac{a_{2,KZ}}3-x_{2,KZ}
= \frac{5103-1859\alpha}
{6(4303\alpha-2079)}>0$. Consequently, $\mathbf{TE}\left(
4-a_{2,KZ}, 1+\ell_{2,KZ}, 1-x_{2,KZ}
\right)$ holds. Substituting the value of $\alpha$, we find that the assumption
$\mathbf{TE}( 3.06500\ldots,3.49127\ldots, 0.95123\ldots)$ holds. A third X-ray iteration is not admissible since $a_{2,KZ}-1+\frac{4x_{2,KZ}}7
= \frac{152545\alpha-154413}
{84(4303\alpha-2079)}
<0$. This gives the improvement we wanted by \cite{BCCLXZ}: for every Kakeya set $K\subset\mathbb R^4$,
\begin{equation}
\label{eq:KZ-hausdorff-final}
\dim_{\mathbb H}K
\ge
4-a_{2,KZ}
=
\frac{
46480643+10717\sqrt{17665}
}{
15629688
}
\approx
3.065.
\end{equation}

\subsubsection{Final diagonal interpolation}
\label{diagonalinterpolation}
The estimate $\mathbf{TE}\left(
\frac{1471405}{480964},
\frac{128375}{43724},
\frac{1357487}{1442892}
\right)$ obtained in Section~\ref{twiceXray} has a $\lambda$-exponent slightly smaller than its dimensional exponent. To recover an estimate on the diagonal, we interpolate it with the twice-bootstrapped Katz--Zahl estimate, whose $\lambda$-exponent is larger than its dimensional exponent.
Write $\alpha_1=\frac{452451}{480964}$, $\beta_1=\frac{128375}{43724}$, $b_1=\frac{1357487}{1442892}$, and $\alpha_2=
\frac{3(6655\alpha-3843)}
{4(4303\alpha-2079)}$, $\beta_2=
\frac{13(5503\alpha-3087)}
{4(4303\alpha-2079)}$, $b_2=
\frac{27953\alpha-3213}
{12(4303\alpha-2079)}$. Thus,
$\mathbf{TE}(4-\alpha_1,\beta_1,b_1)$ and
$\mathbf{TE}(4-\alpha_2,\beta_2,b_2)$ hold. Note that $\alpha_1+\beta_1 = \frac{466144}{120241}<4$, while
$\alpha_2+\beta_2 = 4+
\frac{3(1888\alpha-1533)}
{4303\alpha-2079}>4$. Choose $\theta=
\frac{\alpha_2+\beta_2-4}
{(\alpha_2+\beta_2)-(\alpha_1+\beta_1)}=
\frac{
120241(1888\alpha-1533)
}{
21(11822468\alpha-9266653)
}.$ Then, $\theta(\alpha_1+\beta_1)
+ (1-\theta)(\alpha_2+\beta_2)=4$.
Interpolating the two-ends incidence estimates gives
\[
\alpha_\theta = \theta\alpha_1+(1-\theta)\alpha_2 = \frac{ 3(15124676\alpha-11913661)
}{4(11822468\alpha-9266653)},
\]
\[
\beta_\theta = \theta\beta_1+(1-\theta)\beta_2 = \frac{ 65(2212084\alpha-1731161)}{ 4(11822468\alpha-9266653)},
\]
and
\[
b_\theta = \theta b_1+(1-\theta)b_2 = \frac{128620156\alpha-99852371}{
12(11822468\alpha-9266653)}.
\]
By construction,
$\alpha_\theta+\beta_\theta=4$. Therefore, we find that the two-ends assumption $\mathbf{TE}\left(
3.06056\ldots,
3.06056\ldots,
0.94314\ldots
\right)$ is valid, by explicit computations over $\alpha$. Finally, the remark in Definition~$1.2$ of \cite{BCCLXZ} (or Proposition~$11$ of \cite{KZ}) implies that the Kakeya maximal estimate holds at dimension
\[
\frac{
65(2212084\alpha-1731161)
}{
4(11822468\alpha-9266653)
}
\approx3.06056,
\qquad
\alpha=\frac{257-\sqrt{17665}}{128}.
\]
This finishes the proof of Theorem~\ref{vera3}.

\subsection{Sharpness of the interpolation-bootstrap technique}
\label{bootstrapsharpness}

We now show that the two X-ray branches used above were not chosen randomly. Within the family generated by scale-free interpolation of the estimates $G,P,H$, followed by repeated applications of Lemma~\ref{lem:general-xray-bootstrap} whenever admissible, they are the optimizing endpoints.

\subsubsection{Sharpness of the X-ray branches}

Consider the following interpolation that allows the cancellation between the exponents of $\rho$:
$$
G^\sigma P^{6\sigma}H^{1-7\sigma},
\qquad
0\leq\sigma\leq\frac{1}{7}.
$$
After carefully calculating the exponents for each term and eliminating the factor $q^{-\frac{7\sigma}{4}}\lessapprox 1$, we end up with
\begin{equation}
\mu_1
\lessapprox
C_1^{\frac{C}{\varepsilon_1}}
\lambda_1^{-\ell_0(\sigma)}
\delta^{-a_0(\sigma)}
X_1^{x_0(\sigma)},
\label{eq:optimized-seed-family}
\end{equation}
where
\begin{equation}
a_0(\sigma)
=
1-\frac{\sigma}{4},
\qquad
\ell_0(\sigma)
=
\frac{3}{4}+\frac{13\sigma}{4},
\qquad
x_0(\sigma)
=
\frac{1}{2}-\frac{29\sigma}{12}.
\label{eq:optimized-seed-parameters}
\end{equation}
By performing the now usual refinement transfer and double counting, we obtain that
$\mathbf{RT}
\left(
4-a_0(\sigma),
1+\ell_0(\sigma),
1-x_0(\sigma)
\right)
$ holds. The robust-transversality reduction is admissible precisely when
$x_0(\sigma)\leq\frac{a_0(\sigma)}{3}.$
Using \eqref{eq:optimized-seed-parameters}, this condition becomes
$\sigma\geq\frac{1}{14}.$ For $\sigma<\frac{1}{14}$, one can weaken $1-x_0(\sigma)$ to $1-\frac{a_0(\sigma)}{3}$ and still apply Proposition~\ref{RTtoTE}, but these estimates are dominated by $\sigma=\frac{1}{14}$. Consequently, the admissible range of scale-free interpolations available from $G,P,H$ is exactly the one in \eqref{GPHinterpolation}.
\newpage
See that, in this interval range, the first X-ray iteration is admissible since $a_0(\sigma)-1+\frac{4x_0(\sigma)}7
=
\frac{24-137\sigma}{84}>0.$
We obtain
\begin{equation}
a_1(\sigma)
=
\frac{3(136-165\sigma)}
{4(108-137\sigma)},
\qquad
\ell_1(\sigma)
=
\frac{21(36-73\sigma)}
{4(108-137\sigma)},
\qquad
x_1(\sigma)
=
\frac{312-1697\sigma}
{12(108-137\sigma)}.
\end{equation}
The output is a genuine two-ends estimate throughout
the full admissible interval. Indeed, $\frac{a_1(\sigma)}3-x_1(\sigma)
=
\frac{48+601\sigma}
{6(108-137\sigma)}>0.$
Therefore, we see that the assumption 
$
\mathbf{TE}
\left(
4-a_1(\sigma),
1+\ell_1(\sigma),
1-x_1(\sigma)
\right)$ holds
for every
$
\frac1{14}\leq\sigma\leq\frac17.
$
Moreover,
$$
\frac{d}{d\sigma}x_1(\sigma)
=
-\frac{11711}{(108-137\sigma)^2}<0
\quad\implies\quad
0<x_1(\sigma)\leq
x_1\left(\frac1{14}\right)
=
\frac{2671}{16500}<\frac7{16}.
$$
So, the $x$-range required by the bootstrap remains valid.

We now optimize the one-step dimensional exponent. Differentiating the $a_1(\sigma)$-expression, $$
\frac{d}{d\sigma}a_1(\sigma)
=
\frac{609}{(108-137\sigma)^2}>0.$$ Hence, $4-a_1(\sigma)$ is strictly decreasing in the admissible interval. The strongest one-step dimensional estimate occurs at $\sigma=\frac{1}{14}$, which is the parameter choice used in Section~\ref{twiceXray}.

To apply the X-ray bootstrap for the second time, we require
$
a_1(\sigma)-1+\frac{4x_1(\sigma)}7\geq0$,
which gives the interval $\frac{1}{14}
\leq
\sigma
\leq
\frac{744}{5675}$ 
where a second iteration is available. A second application of
Lemma~\ref{lem:general-xray-bootstrap} gives
\begin{equation}
\label{secondapplication}
\resizebox{0.99\textwidth}{!}{$\displaystyle
    a_2(\sigma)
=
\frac{
3(12168-19535\sigma)
}{
4(9816-17183\sigma)
},\,\,
\ell_2(\sigma)
=
\frac{
21(4056-12443\sigma)
}{
4(9816-17183\sigma)
},\,\,
x_2(\sigma)
=
\frac{
11688-78227\sigma
}{
12(9816-17183\sigma)
}.
$}
\end{equation}
Also, see that $\frac{a_2(\sigma)}3-x_2(\sigma) = \frac{12408+9811\sigma}
{6(9816-17183\sigma)}>0$,
and therefore the assumption
$
\mathbf{TE}
\left(
4-a_2(\sigma),
1+\ell_2(\sigma),
1-x_2(\sigma)
\right)
$
holds throughout the twice-iterable interval.

We now optimize the twice-iterated dimensional exponent. Differentiating the $a_2(\sigma)$-expression, $$\frac{d}{d\sigma}a_2(\sigma)
=
\frac{
12995388
}{
(9816-17183\sigma)^2
}
>0.$$
Hence, $4-a_2(\sigma)$ is strictly decreasing. Thus, the strongest twice-iterated dimensional estimate occurs at $\sigma=\frac{1}{14}$, which is once again the parameter choice used in Section~\ref{twiceXray}. This somehow explains why there was no distinction over iterating once or twice. Moreover, we can prove that $\frac{1471405}{480964}$ is the best dimensional exponent among the zero-, one-, and two-step families. Indeed, the best uniterated value is
$$
\min_{0\leq\sigma\leq \frac{1}{7}}a_0(\sigma)=a_0\left(\frac{1}{7}\right)=\frac{27}{28},
$$
the best once-iterated value is
$$
\min_{\frac{1}{14}\leq \sigma\leq \frac{1}{7}} a_1(\sigma)=a_1\left(\frac{1}{14}\right)=\frac{5217}{5500},
$$
and the best twice-iterated value is
$$
\min_{\frac{1}{14}
\leq
\sigma
\leq
\frac{744}{5675}} a_2(\sigma)=a_2\left(\frac{1}{14}\right)=\frac{452451}{480964}.
$$
Hence, among the endpoints under consideration, the twice-iterated endpoint yields the largest dimensional exponent. This motivates its use in the final diagonal interpolation in Section~\ref{diagonalinterpolation}, where the resulting bound depends both on the dimensional exponent $4-a$ and on the signed difference $3-a-\ell$ between the dimensional exponent and the $\lambda$-exponent. It remains to show that no admissible branch can be continued for the third time. Using \eqref{secondapplication},
\begin{equation}
a_2(\sigma)-1+\frac{4x_2(\sigma)}7
=
-\frac{
11208+100241\sigma
}{
84(9816-17183\sigma)
}
<0,
\,\,
\text{for every }
\frac{1}{14}
\leq
\sigma
\leq
\frac{744}{5675}.
\label{eq:r2-gamma}
\end{equation}
Thus, no twice-iterated estimate in this interpolation family can be used as the input for a third application of Lemma~\ref{lem:general-xray-bootstrap}. This ends the optimization argument for Section  \ref{twiceXray}.
\begin{remark}
    One can show that adding the estimates $L$ and $H$ in the optimization method presented in Section~\ref{diagsharpness} does not upgrade the restriction range. Similarly, the introduction of the $R$-estimate in the interpolation technique of Sections \ref{offsharpness} and \ref{bootstrapsharpness} does not give a better off-diagonal estimate or an improved dimension in the Kakeya maximal estimate. Thus, one needs to derive smarter combinations of meaningful interpolation estimates (for instance, introducing interpolation in between bootstrap steps) or new estimates for $\mu_1$ in our imposed assumptions not present in the current literature (at least known by the author) in order to obtain better results for both problems in question.
\end{remark}

\printbibliography

\end{document}